\numberwithin{theorem}{section}
\newcommand{\TheTitle}{Subdifferential formulae for the supremum of an arbitrary family of functions} 
\newcommand{\TheAuthors}{P\'erez-Aros, P.}
\title{{\TheTitle}\thanks{Submitted to the editors DATE.
\funding{CONICYT-PCHA/doctorado nacional/ 2014-21140621 }}}
\author{
  Pedro P\'erez-Aros\thanks{Instituto de Ciencias de la Ingeniería, Universidad de O'Higgins, Chile
    (\email{pedro.perez@uoh.cl}})
}
\newtheorem{example}[theorem]{Example}
\newtheorem{remark}[theorem]{Remark}
\let\epsilon\varepsilon
\DeclareMathOperator{\ri}{ri}
\newcommand{\N}{\mathbb{N}}
\newcommand{\Rex}{\overline{\mathbb{R}}}
\newcommand{\R}{\mathbb{R}}
\DeclareMathOperator{\supp}{supp}
\DeclareMathOperator{\sub}{\partial}
\DeclareMathOperator{\cl}{cl}
\DeclareMathOperator{\co}{co}
\DeclareMathOperator{\cco}{\overline{co}}
\DeclareMathOperator{\aff}{aff}
\DeclareMathOperator{\epi}{epi}
\DeclareMathOperator{\dom}{dom}
\DeclareMathOperator{\inte}{int}
\DeclareMathOperator{\Pf}{\mathcal{P}_{\textnormal{\tt f}}}
\begin{document}

\maketitle

\begin{abstract}
  	 This work provides calculus for the Fr\'echet and limiting subdifferential of the pointwise supremum given by an arbitrary family of lower semicontinuous functions. We start our study showing   fuzzy results about the  Fr\'echet subdifferential of the supremum function. Posteriorly, we study  in finite- and infinite-dimensional settings the  limiting subdifferential of the supremum function. Finally, we apply our results to the study of the convex subdifferential; here  we recover general formulae for the subdifferential of an arbitrary family of convex functions.
\end{abstract}

\begin{keywords}
  variational analysis and optimization,
supremum functions, calculus rules, subdifferentials.
\end{keywords}

\begin{AMS}
  49J52, 49J53, 49Q10
\end{AMS}

\section{Introduction}

Many mathematical models  concern the study of a constraint minimization problem represented by
\begin{align}\label{PROBLEM1}
\begin{array}{c}
\text{minimize }g \;\;\;\text{ subject to}\\
f_t (x) \leq 0, \; \text{ for all } t\in T\text{ and }x\in X,
\end{array}
\end{align} 
where $T$ is an index set and the function $g$ and $f_t$ are defined in some space $X$. In these applications the (possibly nonsmooth) pointwise supremum $f: =\sup f_t$  plays a crucial role in solving this optimization problem, because the constraint $f_t(x) \leq 0$ for all $t\in T$ can be recast as  one single inequality constraint passing to the supremum function $f:=\sup_{T} f_t$. For that reason,  understanding the subdifferential of the function $f$ is decisive in computing necessary optimality conditions. Problem \cref{PROBLEM1} has been widely studied when the index set $T$ is finite,  and nowadays these results are available in numerous monographs of optimization and variational analysis  (see for instance \cite{MR2986672,MR2191744,mordukhovich2018variational,MR2191745,MR1491362,MR2144010,MR1058436,Clarke:1998:NAC:274798}). 

When the set $T$ is infinite \cref{PROBLEM1} is understood to be  a problem of \emph{infinite programming}, and when the space $X$ is finite-dimensional the more precise terminology of \emph{semi-infinite programming} appears due to the  finite-dimensionality  of the variable $x\in X$ and the infinitude of  $T$. These classes of problems  have been  studied over  the last sixty years  by many researchers for the reason that several models in science can be represented as a constraint of the state or the control of a system during a period of time or in a region of the space. Within this framework,  a classical  assumption is the compactness of the set $T$ together with  some hypothesis about the continuity of the function $(t,x) \to f_t(x)$  and its gradient; in this context the set of active indices  $T(x):=\{ t \in T : f_t(x)=f(x)  \}$ performs an important part in the study (see, e.g., \cite{MR2295358}). 

More recent papers have studied the convex subdifferential of the supremum function when $T$ is an arbitrary index set and  $\{ f_t : t\in T \}$ is an arbitrary  family of (possibly non-smooth)  convex  functions (see, for example, \cite{MR2489616,MR3561780,MR2448918,MR2837551,MR2286967,Perez-Aros2018} and the reference therein). Due to the possible emptiness of the set of active indices at a given point $x$, the authors have considered the $\epsilon$-active index set $T_\epsilon(x):=\{  t \in T : f_t(x)\geq f(x) -\epsilon  \}$.  In these works   researchers have successfully calculated  the convex subdifferential of the supremum function without any qualification about the data functions $f_t's$, using the set of $\epsilon$-active indices, the \emph{$\epsilon$-subdifferential} of the data and the \emph{normal cone} of the domain of the function $f$, all of which are well-known concepts in convex analysis.  

When the data functions $\{ f_t\}_{t\in T}$  are non-convex and non-smooth, but \emph{uniformly locally Lipschitz at point $\bar{x}$}, which means, there are  constants $k, \epsilon > 0$ such that
\begin{align}\label{uniformlocally}
| f_t(x) -f_t(y)| \leq k \| x -y\|, \forall x \in \mathbb{B}(\bar{x},\epsilon),\;\forall t \in T,
\end{align} 
we can refer to the classical result about the upper-estimate of the Clarke subdifferential of the function $f$ at the point $\bar{x}$ (see \cite[Theorem 2.8.2]{MR1058436}). It is important to recall that in this result the   set $T$ is compact  and the function $t\to f_t(x)$ is upper-semi continuous for each $x \in \mathbb{B}(\bar{x},\epsilon)$. Recently, in \cite{MR3033113} (see also \cite{MR3205549}) the authors  studied the limiting subdifferential of the function $f$ at $\bar{x}$; they assumed that $T$ is an arbitrary index and the functions $\{f_t\}_{t\in T}$ satisfy \cref{uniformlocally}. They  provided new upper-estimates and improvements of  the mentioned result relative to the  Clarke subdifferential. Using these calculus rules they derived optimality conditions for infinite and semi-infinite programming.

However, as far as we know, the literature does not provide an upper-estimate for the subdifferential of an arbitrary family of functions $\{f_t:t\in T \}$. This observation motivates our research to derive general  upper estimations for the subdifferential of the  supremum function under an arbitrary index set $T$ and without the uniform locally Lipschitz condition. {The aim of this work is to extend the results of \cite{MR3033113} and give general formulae for the subdifferential of the supremum function, in order to apply them  to derive necessary optimality conditions for general problems in the framework  of infinite programming. The main motivation for considering an arbitrary family of functions comes from the fact that indicators of sets are commonly used in variational analysis to study constraints and set-valued maps related with optimization problems (for example, stability of optimization problems and differentiability of set-valued maps) and they cannot, at least directly, be assumed to be locally Lipschitz. Furthermore, this approach allows us to also study the convex case, and recover general formulae in the convex case,   which in particular shows a unifying approach to the study of the subdifferential of the supremum function. For the sake of brevity, we will confine ourselves to extending the results of  \cite{MR3033113}, keeping in mind our applications for a future work.}

The rest of the paper is organized as follows: In \Cref{SECTION:NOTATION} we summarize the notation that we  use in this paper, which is classical in variation analysis.  In \Cref{SECTION:BASICPROPERTIES} we establish basic properties about the Fr\'echet subdifferential. We begin   \Cref{SECTION:SUBPOINTSUPREMUM} giving the definition of \emph{robust infimum} (see  \cref{robustinfimum}), this notion fits perfectly with our purpose. It can be understood as a bridge, which allows us to express the subgradient of the supremum function as \emph{robust  minimum} of perturbed functions, when the family $\{f_t:t\in T\}$ is an \emph{increasing family of functions}. Nevertheless, the  increasing property of the functions can be obtained considering the max functions over all finite sets of $T$ (see  \Cref{THEOREM:FORMULA:SUPREMUM}). In  \Cref{limitingSub}, where the main results are established, we study the  limiting subdifferential, this section is divided into  two subsections. First, we consider a finite-dimensional space; in this framework we establish a technical result (see  \Cref{Lema51}), which  can be applied to several results, but for simplicity we choose only one setting (see  \cref{teoremcompactindex}), where we provide a convex upper-estimation of the subdifferential. Second, we consider an infinite-dimensional Asplund space. This subsection starts with a result concerning a fuzzy calculus rule for  the normal cone of an intersection of an arbitrary family of sets (see  \cref{TEOCONES}). Later,  we use the definition of \emph{sequential normal epi-compactness} together with some results of \emph{separable reduction} to get   \cref{Mordukhovich:Separable}; this gives as a consequence a generalization of \cite[Theorem 3.2]{MR3033113} (see  \cref{TEO:MORD:NGH}), for non-necessarily uniformly Lipschitz functions. Finally, in \Cref{SECTION:CONVEXSUB} we apply our results to  the convex subdifferential, that is, when the functions $f_t$ are convex. In this section we get new results and also we  recover the general formula of Hantoute-L\'opez-Z\v{a}linescu \cite[Theorem 4]{MR2448918}.
	\section{Notation}\label{SECTION:NOTATION}

Throughout the paper and unless we stipulate  to the contrary, we adopt the following notation, $(X,\| \cdot \|)$ will be an Asplund space (i.e.,  every separable subspace of $X$ has separable dual) and $X^\ast$ its topological dual, with its norm  denoted by $\| \cdot \|_{\ast}$. The   bilinear form $ \langle \cdot ,\cdot \rangle :X^\ast\times  X \to \R$ is given by $\langle x^\ast,x \rangle :=x^\ast(x) $. The weak$^{\ast }$-topology on $X^\ast$ is  denoted by $w(X^{\ast },X)$ ($w^{\ast },$ for
short). The set of all convex, balanced and closed neighborhoods of a point $x$ with respect to the topology $\tau$ is denoted by $\mathcal{N}_x(\tau)$ ($\mathcal{N}_x$ for short). We will write $\Rex:= \R\cup\{-\infty,+\infty \}$ and we adopt the conventions  $1/\infty=0$, $0\cdot \infty= 0 = 0\cdot(- \infty)$ and $ \infty+( -\infty)=(-\infty)+\infty=\infty$. 

The closed unit ball in $X$ and $X^\ast$ are denoted by $\mathbb{B}$ and $\mathbb{B}^\ast$ respectively.  For a  point $x\in X$ (resp. $x^\ast \in X^\ast$) and a number $r\geq 0$ we set $\mathbb{B}(x,r):=x + r \mathbb{B}$ (resp. $\mathbb{B}^\ast(x^\ast,r)= x^\ast + r\mathbb{B}^\ast$).  For a function $f:X\to \Rex$ the set  $\mathbb{B}(x,f,r)$ is defined as the set of all $x' \in \mathbb{B}(x,r)$ such that $|f(x)- f(x')| \leq r$. The symbol $x' \overset{f}{\to}  x$ means $x' \to x$ and $f(x') \to f(x)$; we avoid  some misunderstandings about the topology $\tau$ considered in the last convergence using the  notation $x' \overset{\tau}{\to} x$ which emphasizes  that  the convergence $x' \to x$ is  with respect to the topology $\tau$. 

We denote by $\inte(A)$, $\overline{A}$, $\co(A)$ and  $\cco(A)$, the interior, the closure, the \emph{convex hull} and the \emph{closed convex  hull} of $A$, respectively. The \emph{affine subspace generated by $A$} is denoted by $\aff(A)$. The \emph{polar set} and  \emph{annihilator} of $A$ are defined by
\begin{align*}
A^{\circ}&:=\{ x^\ast\in X^\ast \mid \langle x^\ast,x\rangle \leq 1,\; \forall x\in A  \},\\
A^{\perp}&:=\{ x^\ast \in X^\ast \mid \langle x^\ast	 , x\rangle = 0,\; \forall x\in A  \},
\end{align*}
 respectively. The \emph{indicator}  function of  $A$ is defined as
$\delta_A(x) := 0$, if $x \in A$ and $\delta_A(x)=+\infty$, if $x\notin A$.

Let  $f:X\to \Rex$ be a lower semicontinuous (lsc) function  finite at $x$. Then
	\begin{align*}
\hat{\partial}f(x) :=& \{ x^\ast\in X^\ast \mid \liminf\limits_{h \to 0} \frac{f(x+h) -f(x) - \langle x^\ast, h\rangle}{\| h\|} \geq 0 \},\\
\end{align*}
is called  the  \emph{Fr\'echet (or regular) subdifferential} of $f$ at $x$.  

 The \emph{limiting (or Mordukhovich, or basic) subdifferential}  and the \emph{singular}  \emph{subdifferential} can be  defined as
\begin{align*}
\partial f(x) := &\{ w^\ast\text{-}\lim x^\ast_n : x_n^\ast \in \hat{\partial} f(x_n), \text{ and } x_n \overset{f}{\to}x \},\\
\partial^\infty  f(x) := & \{ w^\ast\text{-}\lim \lambda_n x^\ast_n : x_n^\ast \in \hat{\partial} f(x_n), \; x_n \overset{f}{\to}x \text{ and } \lambda_n \to 0^+ \},
\end{align*}
respectively (see, e.g., \cite{MR2191744,MR2144010,MR1491362,mordukhovich2018variational} for more details). 

If $|f(x)| =+\infty$, we set $\partial f(x):=\emptyset$ for any of the previous subdifferentials. It is important to recall that when  $f$ is convex proper  and lsc  all of these subdifferentials coincide with the classical  subdifferential of convex analysis $$\sub f(x):=\{ x^\ast \in X^\ast : \langle x^\ast , y -x \rangle \leq f(y) -f(x), \forall y \in X  \}.$$


For any set $A$, the Fr\'echet (or Regular) and  the limiting (or Mordukhovich, or basic)   normal cone of $A$ at $x$ are given by 
$\hat{N}(x,A)=\hat{\sub} \delta_A (x)$ and  ${N}(x,A)={\sub} \delta_A (x),$ respectively.

Consider a set $T$ and a  family of functions $\{ f_t \}_{t\in T} \subseteq {\Rex}^T$, we define the  supremum function $f: X\to \Rex$ by
\begin{align}\label{supremumfunction}
	f(x):=\sup_{t\in T}f_t(x), \;\forall x\in X
	\end{align} The symbol $\Pf(T)$ denotes the set of all $F\subseteq T$ such that $F$ is finite. For $F\in \Pf(T)$ we denote $f_F(x):=\max_{s\in F} f_s(x)$.

Following the notation of \cite{MR3033113}, $\R^{T}$ is defined as the space of all multipliers $\lambda=(\lambda_t)$ and $\tilde{\R}^T$ denotes the set of all $\lambda \in \R^{T}$ such that $ \lambda_t \neq0$ for finitely many $t\in  T$; by the symbol $\# \lambda $ we denote the cardinal number of $\sup \lambda $.  The \emph{generalized simplex on $T$} is the set $\Delta(T):=\{ \lambda \in  \tilde{\R}^{T}  :  (\lambda_t) \geq 0 \text{ and } \sum_{t\in T}\lambda_t =1 \}$.  For  a point $\bar{x}$ and $\epsilon\geq 0$,  the set of \emph{$\epsilon$-active indices at $\bar{x}$} is denoted by $T_\epsilon(\{f_t\}_{t\in T},\bar{x}):=\{  t \in T :  f(\bar{x}) \leq f_t(\bar{x}) + \epsilon \}$ ($T_\epsilon(\bar{x})$ for short), meanwhile the set of all \emph{$\epsilon$-active sets at $\bar{x}$} is denoted by $\mathcal{T}_{\epsilon}(\{f_t\}_{t\in T},\bar{x}):= \{ F \in \Pf(T) :  f(\bar{x}) \leq f_F(\bar{x}) + \epsilon \}$ ($\mathcal{T}_{\epsilon}(\bar{x})$ for short) and finally, we define 
\begin{align*}
\Delta(T,\{f_t\}_{t\in T},\bar{x},\epsilon):=\bigg\{ (\lambda_t) \in 	\tilde{\R}^{T}: \begin{array}{c}  \lambda_t \geq 0 \text{ for all } t\in T,\; \\ \lambda_t \leq \epsilon,\; \forall t \in T\backslash T_\epsilon(\bar{x})\\ \text{ and } | \sum_{t\in T} \lambda_t -1| \leq \epsilon	\end{array} \bigg \}
\end{align*} 
($\Delta(T,\bar{x},\epsilon)$ for short). When $T$ is a directed set ordered by $\preceq$, which means $(T,\preceq)$ is an ordered set and for every $t_1,t_2 \in T$ there exists $t_3 \in T$ such that $t_1 \preceq t_3$ and $t_2 \preceq t_3$,  we say that the family of functions is increasing provided that  for all $t_1,t_2 \in T$
\begin{align*}
t_1 \preceq t_2 \implies f_{t_1}(x) \leq f_{t_2}(x) , \; \forall x\in X.
\end{align*}   	

\section{Subdifferential of  supremum function}\label{sectionfuzzy}
In this section we establish  some fuzzy calculus rules for the Fr\'echet subdifferential of the supremum function. First we start \cref{SECTION:BASICPROPERTIES} recalling some basic properties of this subdifferential.   Posteriorly, we use the aforementioned properties to get fuzzy calculus rules for the supremum function of an arbitrary family of lower-semicontinuous functions.

\subsection{Basic properties of  the  Fr\'echet subdifferential}\label{SECTION:BASICPROPERTIES}
This section is devoted to stipulating some simple properties of the Fr\'echet subdifferentials. First, let us recall the following relation between the subdifferential and the normal cone to the epigraph of the function;  a point $x^\ast$ belongs to $\hat{\sub} f(x)$ if and only if  $(x^\ast , -1) \in  \hat{N}((x,f(x)),\epi f)$.

Now we write  the  next result, which is useful to understand Fr\'echet normal vectors to the epigraph of a function in terms of  subgradients in the Fr\'echet subdifferential, this result is well-known and we refer to  \cite{MR2159471,MR2191744,mordukhovich2018variational,MR1333396,MR1884910,MR2144010} for the proof.

\begin{proposition}\label{teo:singularaprox}
	Let $f : X \to \Rex$ be
	a proper lsc function and consider a point $(x^\ast, 0) \in \hat{N}(\epi f , (x,f(x))$. 
	Hence for any $\epsilon > 0$ there are points $y \in X$ and
	$(y^\ast, \lambda)\in \hat{N}(\epi f , (y,f(y))$ such that $\lambda \in (-\epsilon,0)$, $\| y - x \| \leq \epsilon$,  $|f(y) -  f(x)| <\epsilon$  and
	$y^\ast \in x^\ast + \epsilon \mathbb{B}^\ast$.
\end{proposition}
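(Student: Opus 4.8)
The plan is to realize the horizontal normal $(x^\ast,0)$ as an approximate minimality condition and then, via Ekeland's variational principle, perturb to a genuine local minimizer whose optimality condition produces a nearby non-horizontal normal. Throughout I work in the product Asplund space $X\times\R$ and write $z:=(x,f(x))$.

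First I would fix $\epsilon>0$, choose $r\in(0,\epsilon/2)$, and unwind the definition of $(x^\ast,0)\in\hat{N}((x,f(x)),\epi f)$: there is $\delta\in(0,\epsilon)$ so that $\langle x^\ast,u-x\rangle\le \tfrac{r}{4}\bigl(\|u-x\|+|\alpha-f(x)|\bigr)$ for every $(u,\alpha)\in\epi f$ with $\|u-x\|\le\delta$ and $|\alpha-f(x)|\le\delta$; shrinking $\delta$ and using lower semicontinuity of $f$, I may also assume $f(u)>f(x)-\delta$ whenever $\|u-x\|\le\delta$. Consider the continuous linear functional $\phi(u,\alpha):=r\alpha-\langle x^\ast,u\rangle$, whose (constant) derivative is $(-x^\ast,r)$. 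A short computation combining the displayed estimate with $\alpha\ge f(u)>f(x)-\delta$ shows $\phi(u,\alpha)\ge\phi(z)-\eta$ on the localized epigraph, where $\eta:=\tfrac{3r}{2}\delta$ (the precise constant is immaterial, only $\eta\to0$ as $\delta\to0$ matters). Thus $z$ is an $\eta$-approximate minimizer of $\phi+\delta_{\epi f}$ over the $\delta$-ball around $z$.

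Next I would apply Ekeland's variational principle (legitimate since $\epi f$ is closed, $\phi$ continuous, and the localized problem is bounded below) with radius $\sqrt{\eta}$ to obtain a point $(y,\beta)\in\epi f$ with $\|(y,\beta)-z\|\le\sqrt\eta$ that locally minimizes $(u,\alpha)\mapsto\phi(u,\alpha)+\sqrt\eta\,\|(u,\alpha)-(y,\beta)\|+\delta_{\epi f}(u,\alpha)$. The Fermat rule gives $0\in\hat{\partial}$ of this sum at $(y,\beta)$; peeling off the smooth part $\phi$ exactly and invoking the fuzzy sum rule in Asplund spaces for the Lipschitz penalty plus the indicator (the penalty having subgradients in $\sqrt\eta\,\mathbb{B}^\ast$, which is where Asplundness enters), I recover a point near $(y,\beta)$, still denoted $(y,\beta)$, and a normal $(y^\ast,\lambda)\in\hat{N}((y,\beta),\epi f)$ with $\|(y^\ast,\lambda)-(x^\ast,-r)\|\le\sqrt\eta+\gamma$ for an arbitrarily small $\gamma>0$.

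Finally I would tune the constants: choosing $\delta$ (hence $\eta$) and $\gamma$ small enough that $\sqrt\eta+\gamma<\min\{r,\epsilon/2\}$ forces $\lambda\in(-\epsilon,0)$, $y^\ast\in x^\ast+\epsilon\mathbb{B}^\ast$, and $\|y-x\|<\epsilon$. Because $\lambda\ne0$, the point must lie on the graph: any $(y,\beta)$ with $\beta>f(y)$ admits both upward and downward vertical perturbations $(0,\pm t)$ staying in $\epi f$, and testing the Fr\'echet-normal inequality against these forces $\lambda=0$; hence $\beta=f(y)$, and then $|f(y)-f(x)|\le|\beta-f(x)|<\epsilon$. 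This yields all the asserted conclusions. The main obstacle is the bookkeeping of the three error contributions (the $\tfrac{r}{4}$ from the normal estimate, the $\sqrt\eta$ from Ekeland, and the $\gamma$ from the fuzzy sum rule) so that $\lambda$ lands strictly inside $(-\epsilon,0)$; the identification $\beta=f(y)$ through the sign of $\lambda$ is the conceptual crux that converts a set-normal into a genuine epigraphical (graph) normal.
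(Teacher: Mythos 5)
The paper does not prove this proposition at all --- it is stated as a known result and the reader is referred to the literature --- so there is no in-paper argument to compare against; I can only assess your attempt on its own terms. Your overall strategy (tilt the epigraph by $r\alpha$ so that the target normal becomes $(x^\ast,-r)$, pass to an approximate minimizer, apply Ekeland and the fuzzy sum rule) is the natural one, and your final observation that $\lambda\neq 0$ forces $\beta=f(y)$ is correct and is indeed the step that lands the normal on the graph. But the quantitative core of the argument has a genuine gap, in two places.

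First, the localization ``shrinking $\delta$ and using lower semicontinuity I may assume $f(u)>f(x)-\delta$ whenever $\|u-x\|\le\delta$'' is not a consequence of lower semicontinuity: lsc gives $f(u)>f(x)-\delta'$ only on a ball whose radius bears no relation to $\delta'$, and the same $\delta$ cannot in general play both roles. Take $f(u_1,u_2)=-\sqrt{|u_2|}+\delta_{(-\infty,0]}(u_1)$ at the origin with $x^\ast=(1,0)$, a legitimate horizontal Fr\'echet (even convex) normal: the defect of $\phi(u,\alpha)=r\alpha-u_1$ over $\epi f\cap\mathbb{B}(z,\delta)$ is of order $r\sqrt{\delta}$, not $r\delta$, and the Ekeland penalty coefficient $\eta/\rho$ then blows up as $\delta\to 0$. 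Second, even granting your bound $\eta=\tfrac{3r}{2}\delta$, the two constraints on the Ekeland radius $\rho=\sqrt{\eta}$ are incompatible: for the Ekeland point to lie in the interior of the localization ball (so that its minimality is unconstrained and the Fermat rule applies without a ball-normal term) you need $\sqrt{\eta}<\delta$, i.e.\ $\delta>\tfrac{3r}{2}$, while your final tuning $\sqrt{\eta}+\gamma<r$ needs $\delta<\tfrac{2r}{3}$. More structurally, any admissible penalty coefficient satisfies $\eta/\rho\ge\eta/\delta\sim\tfrac{3r}{2}>r$, so the penalty subgradient can cancel the vertical slope $-r$ you installed; since Fr\'echet normals to epigraphs automatically have nonpositive vertical component, all you actually obtain is $\lambda\in[-\epsilon,0]$, and $\lambda=0$ is not excluded --- which also voids the identification $\beta=f(y)$. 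The argument can be repaired, but it requires two further ideas absent from your write-up: decouple the constant in the Fr\'echet-normal inequality (take it $c\ll r$, at the price of a smaller $\delta$) from the tilt $r$, and replace $f$ by the truncation $\max\{f,f(x)-\theta\}$ with $\theta\ll\delta$ to kill the downward excursion before estimating the defect; only then does the penalty become $o(r)$ and the strict negativity of $\lambda$ follow.
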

Next, we give some basic properties of the Fr\'echet subdifferentials. The first four properties are classicall in the literature, the final one can be proved using \cite[Theorem 3.1]{MR2965224} by  rewriting
a Fr\'echet subgradient satisfying an optimization problem as in \cite[Equation (3.8)]{MR3033113}. Nevertheless, we provide a proof for completeness.
\begin{proposition}\label{Prop:Properties}
The Fr\'echet subdifferential satisfies the following properties:
	\begin{enumerate}[label={\textnormal{P(\roman*)}},ref={\textnormal{P(\roman*)}}]
		\item\label{Property:4} Consider an lsc function $f:X\to \Rex$ and $x^\ast\in \hat{\sub} f(\bar{x})$. Then, for every $\epsilon>0$  there exists $\gamma >0$ such that the function 
		\begin{align*}
			x \to f(x)- \langle x^\ast , x -\bar{x} \rangle + \epsilon \| x -\bar{x}\| + \delta_{ \mathbb{B} (\bar{x},\gamma)} 
			\end{align*}
		 attains its minimum at $\bar{x}$.
		\item \label{Property:5}  (Calculus estimation) For every $\epsilon>0$,  any point $x\in X $  and every  finite-dimensional subspace $L$ of $X$, we have
		\begin{align*}
		\hat{\sub} \delta_{\mathbb{B}(x,\epsilon)\cap L} (x') \subseteq L^\perp, \; \forall x'\in \inte \mathbb{B}(x,\epsilon).
		\end{align*}
		\item\label{Property:3.5} (Enhanced Fuzzy Sum Rule) Consider an lsc function $f$, a convex Lipschitz function $g$ and a point  $x \in X$.  If $x$ is a local minimum of $f+g$ with  $f(x) \in \R$, there are sequences $(x_n,x_n^\ast)_{n \in\N  } $ such that  $x_n^\ast \in \hat{\sub} f(x_n)$,  $x_n \overset{f}{\to} x_0$,  $x^\ast_n\overset{\| \cdot \| }{\to} x^\ast_0$ with $  -x_0^\ast \in \hat{\sub} g(x)$. 
		\item \label{Property:3} (Fuzzy Sum Rule) Consider a  finite family of lsc functions $f_j :X \to \Rex$ with $j \in J$  and  $x^\ast\in \hat{\sub} (\sum_{j\in J} f_j)(x)$. Then,  there are nets $(x_{\alpha,j},x^\ast_{\alpha,j})_{\alpha \in \mathbb{D} }$  such that $ x^\ast_{\alpha, j}\in  \hat{\sub} f_j(x_{\alpha,j})$, $x_{\alpha,j} \overset{f}{\to} x$ and $\sum_{j\in J1} x_{\alpha,j}^\ast \overset{w^\ast}{\to} x^\ast$.
		\item\label{prop:v} For every finite family of lsc functions $f_j :X \to \Rex$ with $j \in J$ we have that  for all $x\in X$
		\begin{align}
		\hat{\sub}  f_J(x) \subseteq \bigcap\limits_{\epsilon>0} \cl^{w^\ast} \bigg\{  \sum \lambda_t \hat{\sub}  f_{j}(x_j) : \begin{array}{c} x_j \in \mathbb{B}(x,f_j,\epsilon), \lambda  \in \Delta(J, x,\epsilon)\\
		 \text{ and } \#\lambda\leq \dim(X) + 1  \end{array} \bigg\}.
		\end{align}
	\end{enumerate}
\end{proposition}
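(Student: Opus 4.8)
The plan is to reduce the computation of a Fréchet subgradient of the finite maximum $f_J$ to a constrained optimization problem and then to apply a fuzzy multiplier rule. Fix $x^\ast \in \hat{\sub} f_J(\bar{x})$ and $\epsilon > 0$. By \ref{Property:4}, after shrinking to a ball $\mathbb{B}(\bar{x},\gamma)$, the point $\bar{x}$ is a local minimizer of $x \mapsto f_J(x) - \langle x^\ast, x - \bar{x}\rangle + \epsilon\|x-\bar{x}\|$. Writing $f_J = \max_{j\in J} f_j$ through its epigraph, $\epi f_J = \bigcap_{j\in J}\epi f_j$, this is equivalent to saying that $(\bar{x}, f_J(\bar{x}))$ solves the problem of minimizing $r - \langle x^\ast, x-\bar{x}\rangle + \epsilon\|x-\bar{x}\|$ subject to the constraints $f_j(x) \le r$ for $j \in J$ and $(x,r)$ near $(\bar{x},f_J(\bar{x}))$. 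This is precisely the format of an optimization problem governed by a finite family of epigraphical constraints, as in \cite[Equation (3.8)]{MR3033113}.

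First I would apply \cite[Theorem 3.1]{MR2965224} to this localized problem, the engine being the fuzzy sum rules \ref{Property:3.5} and \ref{Property:3}. This yields, for the prescribed tolerance $\epsilon$, approximate stationary data: points $x_j$ with $x_j \to \bar{x}$ and $f_j(x_j) \to f_j(\bar{x})$, Fréchet subgradients $x_j^\ast \in \hat{\sub} f_j(x_j)$, and nonnegative multipliers $\lambda_j \ge 0$ such that $\sum_{j} \lambda_j x_j^\ast$ is $w^\ast$-close to $x^\ast$. Two structural features come out of the multiplier rule and are essential here. The approximate complementary slackness forces the multipliers attached to indices that are not $\epsilon$-active to be negligible, which is exactly the requirement $\lambda_j \le \epsilon$ for $j \in J \setminus J_\epsilon(\bar{x})$; together with the normalization coming from the constraint on $r$ it gives $|\sum_j \lambda_j - 1| \le \epsilon$. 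Hence $\lambda \in \Delta(J,\bar{x},\epsilon)$, and for the chosen net index $x_j \in \mathbb{B}(\bar{x},f_j,\epsilon)$.

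It remains to enforce the cardinality bound $\#\lambda \le \dim(X) + 1$. When $X$ is infinite-dimensional this is vacuous, so assume $\dim(X) < \infty$. Discard the non-$\epsilon$-active indices, whose total multiplier mass is of order $\epsilon$, absorbing their contribution into the $w^\ast$-error. Normalizing the remaining nonnegative multipliers, the point $(\sum_j \lambda_j)^{-1}\sum_j \lambda_j x_j^\ast$ lies in the convex hull of the finite set $\{x_j^\ast\}$ in the finite-dimensional space $X^\ast$; by Carathéodory's theorem it is a convex combination of at most $\dim(X^\ast)+1 = \dim(X)+1$ of the $x_j^\ast$. Rescaling restores a representation $\sum_j \tilde{\lambda}_j x_j^\ast$ with $\tilde{\lambda} \in \Delta(J,\bar{x},\epsilon)$, $\# \tilde{\lambda} \le \dim(X)+1$, still $w^\ast$-close to $x^\ast$. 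Thus $x^\ast$ belongs to the $w^\ast$-closure of the set indexed by $\epsilon$; since $\epsilon>0$ was arbitrary, intersecting over $\epsilon$ gives the claimed inclusion.

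The main obstacle I expect is extracting, from the fuzzy optimality conditions, the precise localization of the multipliers on the $\epsilon$-active index set and the normalization $|\sum_j\lambda_j -1|\le\epsilon$ simultaneously; this is where the exact formulation of \cite[Theorem 3.1]{MR2965224} and the epigraphical rewriting of \cite[Equation (3.8)]{MR3033113} must be matched carefully, since an imprecise complementary-slackness estimate would either violate the constraint $\lambda_j\le\epsilon$ off the active set or spoil the sum normalization. The Carathéodory reduction is then routine, but one must be sure it retains only indices that still obey the active-set bound, which is guaranteed because it selects a subset of the already-localized support.
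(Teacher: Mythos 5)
Your overall strategy is legitimate and is in fact the route the paper itself acknowledges in passing (reducing a Fr\'echet subgradient of $f_J$ to a localized constrained problem via \ref{Property:4} and the epigraphical rewriting $f_j(x)\le r$, then invoking a fuzzy multiplier rule); the paper's own proof is a hands-on version of the same idea, obtaining the complementary slackness not from an external multiplier theorem but from $\hat{\partial}m(\alpha_1,\alpha_2)=\{(p_1,p_2)\in\Delta(\{1,2\}): p_i=0 \text{ if } \alpha_i<m(\alpha_1,\alpha_2)\}$ applied through \ref{Property:3}, and converting horizontal epigraphical normals ($\lambda_i=0$) into genuine subgradients via \cref{teo:singularaprox} --- a case your write-up does not address explicitly.

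The genuine gap is in your Carath\'eodory step. You propose to ``discard the non-$\epsilon$-active indices, whose total multiplier mass is of order $\epsilon$, absorbing their contribution into the $w^\ast$-error.'' This is not justified: the bound $\lambda_j\le\epsilon$ for $j\notin J_\epsilon(\bar{x})$ gives no control whatsoever on $\lambda_j\|x_j^\ast\|_\ast$, since the subgradients $x_j^\ast\in\hat{\sub}f_j(x_j)$ of a merely lsc function can have arbitrarily large norm (think of indicator functions). The terms $\lambda_j x_j^\ast$ with small $\lambda_j$ are precisely the ones the definition of $\Delta(J,\bar x,\epsilon)$ is designed to retain, and they cannot be thrown into the error term. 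The paper circumvents this by never discarding anything: when $k>\dim(X)+1$, the vectors $(x_i^\ast,1)$ are affinely dependent, so one finds $(\alpha_i)$ with $\sum\alpha_i x_i^\ast=0$ and $\sum\alpha_i=0$, and shifts the multipliers by $\beta\alpha_i$ with $\beta$ chosen as in \cref{CHOOSEOFBETA}; this kills one index while leaving the sum $\sum\lambda_i x_i^\ast$ \emph{exactly} unchanged and only degrades the constraints from $\Delta(J,x,\epsilon)$ to $\Delta(J,x,2\epsilon)$, which iterating $p=\#J-\dim(X)-1$ times still lands inside the intersection over all $\epsilon>0$. You would need to replace your discard-and-normalize reduction with an argument of this type (an affine Carath\'eodory reduction that preserves the value of the combination and tracks the $\epsilon$-caps on the inactive multipliers) for the cardinality bound to hold.
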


	\begin{proof}
	 \cref{Property:4,Property:5}  follow from definition.  \cref{Property:3.5} is the  well-known \emph{Enhanced Fuzzy Sum Rule} (see, e.g.,  \cite{MR1453305,MR1777630,MR2159471,MR2191744,Clarke:1998:NAC:274798}). \cref{Property:3} is an equivalence of the  \emph{Enhanced Fuzzy Sum Rule} (see, e.g.,  \cite{MR2840670}). Finally, we must prove  \cref{prop:v}; to complete this  task, it is enough to  consider the pointwise maximum of  two functions $g:=\max\{ f_1,f_2\}$. Let $x^\ast \in \hat{\sub} g(x)$, $\epsilon\in (0,1)$,  $V \in \mathcal{N}_{0}(w^*)$, so by \cref{Property:4} there exist $\gamma \in (0,\epsilon)$   such that the function $$y \to g(y) -\langle x^\ast , x -\bar{x} \rangle + \epsilon \| y - x\| + \delta_{ \mathbb{B}(x,\gamma)} (y)$$ attains its minimum at $x$. Hence, assuming that $\gamma >0$ is small enough, one can  suppose that
	\begin{align}\label{EQ:LSC}
	f_i(u) > f_i(x) - \epsilon, \text{ for all }u \in \mathbb{B}(x,\gamma),\; i=1,2.
	\end{align} 
	Now consider the function $$X\times \R^2 \ni (w,\alpha_1,\alpha_2) \to m(\alpha_1,\alpha_2)+ \delta_{\epi f_1 }(w,\alpha_1) + \delta_{\epi f_2}(w,\alpha_2)- \phi(w) + \delta_{F \cap \mathbb{B}(x,\gamma)}(w),$$ where $m(\alpha_1,\alpha_2):=\max\{\alpha_1,\alpha_2\}$ and $\phi(y):=\langle x^\ast , x -\bar{x} \rangle - \epsilon \| y - x\| $ . This function has a local minimum at the point $(x,f_1(x),f_2(x))$, so by \cref{Property:3} we can choose
	\begin{enumerate}[label={(\roman*)},ref={(\roman*)}]
		\item\label{item:i} $(\alpha_1,\alpha_2)   \in \R^2$ with $|f_i(x) - \alpha_i| \leq \gamma/2$ and $(q_1,q_2)\in \hat{\partial} m (\alpha_1,\alpha_2)=\{  (p_1,p_2) \in \Delta(\{ 1,2\}): p_i=0 \text{ if } \alpha_i < m(\alpha_1,\alpha_2) \}$.
		\item\label{item:ii} $(w_i,\beta_i)  \in \mathbb{B}(x, f_i(x),\gamma/2 )$  and $(w^{\ast}_i,\lambda_i) \in \hat{\sub} \delta_{\epi f_i} (w_i,\beta_i)$
	\end{enumerate}
	such that $w^\ast_1 + w^\ast_2 \in x^\ast + V + V$, $|q_1 + \lambda_1| < \gamma/2$ and $|q_2 + \lambda_2| < \gamma/2$. Consequently, by   \cref{EQ:LSC,item:ii} we have that $(w_i,f_i(w_i))  \in \mathbb{B}(x, f_i(x),\epsilon )$ by  classical argumentation we have that $(w^{\ast}_i,\lambda_i) \in \hat{\sub} \delta_{\epi f_i} (w_i,f_i(w_i))$  and $\lambda_i \leq 0$ (see, e.g., \cite{MR2159471,MR2191744,mordukhovich2018variational,Clarke:1998:NAC:274798}). Now, we check that $(-\lambda_1,-\lambda_2) \in \Delta(\{1,2\},x,\epsilon)$, indeed $|\lambda_1 + \lambda_2 - 1|=|\lambda_1 + \lambda_2 - q_1+q_2| \leq \epsilon$; moreover   if $f_i(x)<g(x)$ (for small enough $\epsilon$) we can assume (by \cref{item:i}) that $\alpha_i <m(\alpha_1,\alpha_2)$, so $q_i=0$ and consequently $|\lambda_i| \leq \epsilon$. Now, if  $\lambda^\ast_i\neq 0$ for $i=1,2$, we define $x_i^\ast:=- \lambda_i^{-1} w_i^\ast \in\hat{ \sub} f(w_i)$; otherwise if there exists some   $\lambda_i=0$, then one can approximate this element using   \cref{teo:singularaprox}. Therefore, we have proved that
		\begin{align*}
		\hat{\sub}  f_J(x) \subseteq \bigcap\limits_{\epsilon>0} \cl^{w^\ast} \bigg\{  \sum \lambda_t \hat{\sub}  f_{j}(x_j) : \begin{array}{c} x_j \in \mathbb{B}(x,f_j,\epsilon),\\ \lambda  \in \Delta(J, x,\epsilon)\end{array} \bigg\}.
	\end{align*}
	
	Now assume that $X$ is finite-dimensional. Consider $x^\ast= \sum_{i=1}^{k} \lambda_i x^\ast_i$ for some  $k >\dim(X)+1$ with $\lambda_i>0$, $x^\ast_i \in  \hat{\sub}  f_{t_i}(x_i)$,  $ x_i \in \mathbb{B}(x,f_{t_i},\epsilon)$ and $\lambda \in \Delta(J, x,\epsilon)$. Hence, $\{ (x^\ast_i,1)\}_{i=1}^k \subseteq X\times \R$ must be linearly dependent in $X \times \R$, and there are numbers $(\alpha_i)_{i=1}^k  \subseteq \mathbb{R}$ not all equal to zero such that $\sum_{i=1}^k \alpha_i x^\ast_i=0$ and $\sum \alpha_i =0$. Now consider 
	\begin{align}\label{CHOOSEOFBETA}
	\beta:=\min\{  \frac{ \lambda}{|\alpha_i|} : i \in I^+ \cup I^{-} \}, \text{where }I^+:=\{  i : \alpha_i >0\}  \text{ and }I^{-} := \{ i: \alpha_i <0		\} .
	\end{align}
	Then,
	\begin{enumerate}
		\item[1)] If $\beta= \frac{ \lambda_{i_0}}{\alpha_{i_0}}$ for some $i_0 \in I^+$, we notice that 
		$$x^\ast = \sum_{i=1}^{k} (\lambda_i- \beta \alpha_i) x^\ast_i=\sum_{\substack{i=1\\ i\neq i_0}}^{k} (\lambda_i- \beta \alpha_i) x^\ast_i,$$ moreover $|\sum_{i=1}^{k} (\lambda_i- \beta \alpha_i) - 1| = |\sum_{i=1}^{k} \lambda_i- 1| \leq \epsilon$ and for all  $t_{i} \notin T_\epsilon(x)$
		\begin{enumerate}
			\item[1.1)] If  $i \in I^+$, $0 \leq \lambda_i- \beta \alpha_i \leq \lambda_i \leq \epsilon$.
			\item[1.1)] If  $i \in I^{-}$, $0 \leq \lambda_i- \beta \alpha_i = \lambda_i +  \beta |\alpha_i|\leq 2\lambda_i \leq 2\epsilon$ (recall \cref{CHOOSEOFBETA}).
		\end{enumerate}
		\item[2)] If $\beta= \frac{ \lambda_{i_0}}{\alpha_{i_0}}$ for some $i_0 \in I^{-}$, we notice that 
		$$x^\ast = \sum_{i=1}^{k} (\lambda_i+ \beta \alpha_i) x^\ast_i=\sum_{\substack{i=1\\ i\neq i_0}}^{k} (\lambda_i+ \beta \alpha_i) x^\ast_i,$$ moreover $|\sum_{i=1}^{k} (\lambda_i + \beta \alpha_i) - 1| = |\sum_{i=1}^{k} \lambda_i- 1| \leq \epsilon$ and for all  $t_{i} \notin T_\epsilon(x)$
		\begin{enumerate}
			\item[2.1)] If  $i \in I^{-}$, $0 \leq \lambda_i+ \beta \alpha_i \leq \lambda_i \leq \epsilon$.
			\item[2.1)] If  $i \in I^+$, $0 \leq \lambda_i+ \beta \alpha_i = \lambda_i +  \beta |\alpha_i|\leq 2\lambda_i \leq 2\epsilon$ (recall \cref{CHOOSEOFBETA}).
		\end{enumerate}
	\end{enumerate}
	Therefore, $$x^\ast \in \big\{  \sum_{t\in J} \lambda_t \hat{\sub}  f_{t}(x_t) : \begin{array}{c}
	 x_t \in \mathbb{B}(x,f_t,2\epsilon), (\lambda_t) \in \Delta(J, x,2\epsilon)\\ \text{ and } \# (\lambda_t) \leq k-1 \end{array} \big\}.$$ 
	 
	 Repeating the processes (if $k -1 > \dim (X)+1$) one gets that $$x^\ast \in \big\{  \sum_{t\in J} \lambda_t \hat{\sub}  f_{t}(x_t) : \begin{array}{c}
 x_t \in \mathbb{B}(x,f_t,2^{p}\epsilon), (\lambda_t) \in \Delta(J, x,2^p\epsilon) \\ \text{ and } \# (\lambda_t) \leq \dim(X)+1  \end{array} \big\}$$ with $p=\#J - \dim(X)-1$. 
\end{proof}

	\subsection{Fuzzy calculus rules for the subdifferential of the supremum function}\label{SECTION:SUBPOINTSUPREMUM}
In this section $T$ will be an arbitrary index set and $f_t:X\to \Rex$ will be a family of lsc functions. We recall that  $f$ is defined as the supremum function of the family \cref{supremumfunction}.

 The next definition is an adaptation of the notion of the  \emph{robust infimum} or the \emph{decoupled infimum} used in subdifferential theory to get \emph{fuzzy calculus rules} (see, e.g., \cite{MR2191744,mordukhovich2018variational,MR2986672,MR3033176,MR2144010,Perez-Aros2018}).
\begin{definition}[robust infimum]\label{robustinfimum}
	We will say that the family $\{f_t : t \in T\}$ has a robust infimum on $B\subseteq X$ provided that 
	\begin{align}\label{robustinfimum:equation}
	\inf\limits_{x\in B} f(x) =\sup\limits_{t\in T}\inf\limits_{x \in B}f_t(x).
	\end{align}
	In addition, if there exists some $\bar{x}\in B$  such that $\sup\limits_{t\in T}\inf\limits_{x \in B}f_t(x)= f(\bar{x})$, then we will say that  $\{ f_t : t \in T \}$ has a robust  minimum on $B\subseteq X$. Finally, we say that    the family $\{ f_t : t \in T \}$ has a  robust local minimum at $\bar{x}$ if $\{ f_t : t \in T \}$ has a robust  minimum on some neighborhood $B$ of $\bar{x}$.
\end{definition}
The next lemma shows a sufficient  condition for the existence of a robust  minimum. {We recall that a function $g:X \to  \Rex$, where $(X, \tau)$ is a topological space, is called $\tau$-infcompact provided that for every $\alpha \in \R$ the sublevel set $\{ x \in X : g(x)\leq \alpha  \}$ is $\tau$-compact.} 
	\begin{lemma}\label{Lemma:SuficientCondition}[Sufficient condition for robust  minimum]
	Let $X$ be a Banach space and  $B\subseteq X$. Suppose that   $\{f_t : t \in T\}$ is an increasing family of $\tau$-lsc, 	$B$ is $\tau$-closed and there exists some $t_0$ such that $f_{t_0}$  is $\tau$-infcompact on $B$, with $\tau$ some topology coarser (weaker or smaller)  than the norm topology. Then the family $\{f_t : t \in T\}$ has a robust  minimum on $B$.
\end{lemma}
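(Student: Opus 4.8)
The plan is to prove the (formally stronger but equivalent) statement that there exists $\bar x \in B$ with $f(\bar x) \le \alpha$, where $\alpha := \sup_{t\in T}\inf_{x\in B} f_t(x)$. Writing $r_t := \inf_{x\in B} f_t(x)$ and $m := \inf_{x\in B} f(x)$, the trivial ``weak duality'' bound $r_t \le m$ for every $t$ (because $f_t \le f$) gives $\alpha \le m$; so once such an $\bar x$ is produced we get $\alpha \le m \le f(\bar x) \le \alpha$, whence $f(\bar x)=m=\alpha$ and $\bar x$ realizes the robust minimum (and simultaneously \cref{robustinfimum:equation}). If $\alpha=+\infty$ then $m=+\infty$, i.e.\ $f\equiv+\infty$ on $B$, and any point of $B$ works; moreover the hypotheses rule out $\alpha=-\infty$, since a $\tau$-lsc $\tau$-infcompact function attains its infimum whenever that infimum is finite, so $r_{t_0}>-\infty$ and $\alpha\ge r_{t_0}$. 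Thus I may assume $\alpha\in\R$.

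Next I would exploit the increasing structure to confine everything to a single $\tau$-compact set. For every $t\succeq t_0$ one has $f_{t_0}\le f_t$, hence for each $\gamma\in\R$ the set $\{x\in B: f_t(x)\le\gamma\}$ is contained in the $\tau$-compact set $\{x\in B: f_{t_0}(x)\le\gamma\}$; being itself $\tau$-closed (as $f_t$ is $\tau$-lsc and $B$ is $\tau$-closed), it is a closed subset of a compact set, hence $\tau$-compact. In particular each $f_t$ with $t\succeq t_0$ is $\tau$-lsc and $\tau$-infcompact on $B$, so it attains its infimum $r_t\le\alpha$ on $B$. Consequently the set $K_t := \{x\in B: f_t(x)\le\alpha\}$ is nonempty and $\tau$-closed, and for $t\succeq t_0$ it is contained in the fixed $\tau$-compact set $K_{t_0}=\{x\in B:f_{t_0}(x)\le\alpha\}$.

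The core is then a finite intersection property argument for the family $\{K_t: t\succeq t_0\}$ inside $K_{t_0}$. Given finitely many indices $t_1,\dots,t_n$, directedness of $(T,\preceq)$ provides a common upper bound $t$ with $t\succeq t_0$ and $t\succeq t_i$ for all $i$; the increasing property gives $f_{t_i}\le f_t$, so $K_t\subseteq K_{t_i}$ for each $i$, and since $K_t\neq\emptyset$ we obtain $\bigcap_{i} K_{t_i}\neq\emptyset$. Hence $\{K_t:t\succeq t_0\}$ is a family of $\tau$-closed subsets of the $\tau$-compact set $K_{t_0}$ with the finite intersection property, so $\bigcap_{t\succeq t_0}K_t\neq\emptyset$. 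The same directedness argument shows this intersection lies in $K_s$ for every $s\in T$, so it coincides with $\bigcap_{t\in T}K_t$. Any $\bar x$ in it satisfies $f_t(\bar x)\le\alpha$ for all $t$, i.e.\ $f(\bar x)=\sup_{t\in T} f_t(\bar x)\le\alpha$, which finishes the proof.

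I expect the only genuinely delicate point to be \emph{attainment}: one must guarantee that $K_t$ is nonempty even in the borderline case $r_t=\alpha$, where the infimum of $f_t$ need not be achieved a priori. This is precisely what $\tau$-infcompactness of $f_{t_0}$ buys, transferred to every $f_t$ with $t\succeq t_0$ through $f_{t_0}\le f_t$; it is also why the compactness hypothesis is placed on a single ``small'' member of the family rather than on $f$ itself. Equivalently, one could run a net-of-minimizers argument: choose $x_t$ minimizing $f_t$ for $t\succeq t_0$, observe that $f_{t_0}(x_t)\le f_t(x_t)=r_t\le\alpha$ confines the net $(x_t)$ to the $\tau$-compact set $K_{t_0}$, pass to a $\tau$-convergent subnet $x_t\to\bar x$, and use $\tau$-lower semicontinuity of each $f_s$ together with $f_s(x_t)\le\alpha$ for $t\succeq s$ to conclude $f_s(\bar x)\le\alpha$.
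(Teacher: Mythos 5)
Your argument is correct, and it is worth noting that the paper itself does not prove this lemma at all: it simply cites \cite[Lemma 3.5]{Perez-Aros2018}, so you have supplied the missing argument. Your route --- reduce to producing $\bar x\in B$ with $f(\bar x)\le\alpha:=\sup_t\inf_B f_t$ via the trivial inequality $\alpha\le\inf_B f$, transfer $\tau$-infcompactness from $f_{t_0}$ to every $f_t$ with $t\succeq t_0$ through monotonicity, and then run a finite-intersection-property argument on the closed sets $K_t=\{x\in B: f_t(x)\le\alpha\}$ inside the compact set $K_{t_0}$, using directedness of $T$ --- is the natural and essentially canonical proof, and every step checks out; you also correctly isolate attainment in the borderline case $r_t=\alpha$ as the place where compactness is genuinely needed. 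The one blemish is your dismissal of the case $\alpha=-\infty$: the stated justification (``an infcompact function attains its infimum whenever that infimum is finite, so $r_{t_0}>-\infty$'') is a non sequitur, and since the paper's functions are $\Rex$-valued, $r_{t_0}=-\infty$ can occur when $f_{t_0}$ takes the value $-\infty$ on $B$ (the nested compact sublevel sets then force attainment at a point where $f_{t_0}=-\infty$). This does not damage the proof, because in that case the identical argument applies with $K_t=\{x\in B: f_t(x)=-\infty\}=\bigcap_n\{x\in B: f_t(x)\le -n\}$, which is again nonempty, $\tau$-closed, and contained in the compact $K_{t_0}$ for $t\succeq t_0$; it would be cleaner simply to run the FIP argument uniformly for $\alpha\in[-\infty,+\infty)$ rather than to claim the case is excluded.
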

\begin{proof}
	\cite[Lemma 3.5]{Perez-Aros2018}
\end{proof}

{It is worth mentioning that in the above result the interchange between minimax in  \cref{robustinfimum:equation} is given without any convex-concave assumptions as in classical results (see, e.g.,  \cite{MR1921556,MR2144010,MR791361,MR838482,MR0055678,MR0312194}). This follows from the fact that in our result these assumptions are replaced by the  increasing property of the family of functions.}

\begin{remark}
	it has not escaped our notice  that the hypothesis of infcompactness of some $f_t$ is necessary, even if the supremum function $f$ is infcompact. Indeed, consider 
	$f_n(x) =n^2x^2-x^4$, then it is easy to see that $f_{n}\leq f_{n+1}$ and $f=\delta_{\{0\}}$; moreover $\inf_{\R} f_n =-\infty$ and $\inf_{\R} f=0$. 
\end{remark}
	The next results give us a necessary condition for the existence of robust  minimum in terms of an approximate Fermat's rule. More precisely, we have the following results
\begin{proposition}\label{TEO:RobustMininum}
Let  $\{f_t : t \in T \}$  be an increasing family of lsc functions. If  $\{f_t : t \in T \}$  has a robust local minimum at  $\bar{x}$,  then
	\begin{align}\label{FORM:1}
	0\in \bigcap\limits_{\epsilon >0} \cl^{\| \cdot \| }\bigg \{ \bigcup\{  \hat{\sub} f_t(x) : x\in \mathbb{B}(\bar{x},f_t,\epsilon), \; t \in T_{\epsilon}(\bar{x})  \}        \bigg\}.
	\end{align}
\end{proposition}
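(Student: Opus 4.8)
The plan is to reduce the statement to a single approximation claim and then manufacture the approximating subgradients by a variational argument. Concretely, it suffices to show that for every $\epsilon>0$ and every $\delta>0$ there are an index $t\in T_\epsilon(\bar x)$, a point $x\in\mathbb B(\bar x,f_t,\epsilon)$ and a subgradient $x^\ast\in\hat\partial f_t(x)$ with $\|x^\ast\|_\ast\le\delta$; unwinding the definitions of the norm-closure and of the intersection over $\epsilon$ shows this is exactly \cref{FORM:1}. To begin I would exploit the order structure: since $(T,\preceq)$ is directed and $\{f_t\}$ is increasing, $t\mapsto\inf_B f_t$ is a monotone net, so $\sup_t\inf_B f_t$ is its limit along $\preceq$. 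Let $B$ be the neighbourhood on which the robust minimum holds; we may assume $B$ is a closed ball centred at $\bar x$, since by the elementary minimax inequality the robust minimum persists on every neighbourhood of $\bar x$ contained in $B$. Assuming $f(\bar x)\in\R$ (the only regime in which the subgradients are nonvacuous), the robust-minimum identity then reads $\inf_B f_t\uparrow f(\bar x)$.

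Now fix $\epsilon,\delta>0$, set $\eta:=\frac12\min\{\epsilon,\delta\}$, and pick $t$ with $\inf_B f_t>f(\bar x)-\eta^2$. Because $f_t\le f$ and $\bar x\in B$ one gets $f(\bar x)-\eta^2<\inf_B f_t\le f_t(\bar x)\le f(\bar x)$; the left inequality yields $t\in T_{\eta^2}(\bar x)\subseteq T_\epsilon(\bar x)$, and together they give $f_t(\bar x)\le\inf_B f_t+\eta^2$, so $\bar x$ is an $\eta^2$-approximate minimizer of the proper lsc function $f_t$ on the complete set $B$. Applying Ekeland's variational principle with parameter $\eta$ produces $x_\eta\in B$ with $f_t(x_\eta)\le f_t(\bar x)$, $\|x_\eta-\bar x\|\le\eta$, and such that $x_\eta$ minimizes $y\mapsto f_t(y)+\eta\|y-x_\eta\|+\delta_B(y)$ over $X$. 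As $x_\eta\to\bar x$ when $\eta\to0$ and $B$ is a neighbourhood of $\bar x$, for small $\eta$ the point $x_\eta$ lies in $\inte B$, so the indicator is locally inactive and $x_\eta$ is a genuine local minimum of $f_t+g$ for the convex Lipschitz function $g:=\eta\|\cdot-x_\eta\|$.

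Finally I would invoke the Enhanced Fuzzy Sum Rule \cref{Property:3.5} at $x_\eta$: it delivers sequences $x_n\overset{f_t}{\to}x_\eta$ and $x_n^\ast\in\hat\partial f_t(x_n)$ with $x_n^\ast\to x_0^\ast$ in norm and $-x_0^\ast\in\hat\partial g(x_\eta)=\eta\,\mathbb B^\ast$, so $\|x_0^\ast\|_\ast\le\eta$. For large $n$ this gives $\|x_n^\ast\|_\ast\le 2\eta\le\delta$, while $x_n\to x_\eta$, $f_t(x_n)\to f_t(x_\eta)$ and the bounds $\|x_\eta-\bar x\|\le\eta$, $|f_t(x_\eta)-f_t(\bar x)|\le\eta^2$ force $\|x_n-\bar x\|\le\epsilon$ and $|f_t(x_n)-f_t(\bar x)|\le\epsilon$, i.e. $x_n\in\mathbb B(\bar x,f_t,\epsilon)$; the triple $(t,x_n,x_n^\ast)$ is then the one required. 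I expect the only genuine difficulty to be bookkeeping rather than conceptual: one must drive a single smallness parameter $\eta$ through three independent tolerances at once — the $\epsilon$-activity of $t$, the membership $x_n\in\mathbb B(\bar x,f_t,\epsilon)$ (which simultaneously controls the displacement $\|x_n-\bar x\|$ and the oscillation $|f_t(x_n)-f_t(\bar x)|$), and the norm bound $\delta$ on the subgradient — and one must verify the interiority of $x_\eta$ so that $\delta_B$ can be discarded before \cref{Property:3.5} is applied.
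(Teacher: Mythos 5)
Your argument is correct and follows essentially the same route as the paper's proof: use the robust minimum to extract a nearly active index $t$ for which $\bar{x}$ is an $\eta^2$-approximate minimizer of $f_t+\delta_B$, apply Ekeland's variational principle with parameter $\eta$, and then invoke the Enhanced Fuzzy Sum Rule \cref{Property:3.5} to produce the small Fr\'echet subgradients, with the same bookkeeping to fit the three tolerances. The extra observations you add (reduction to a single $(\epsilon,\delta)$-claim, persistence of the robust minimum on smaller neighbourhoods, monotonicity of the net $t\mapsto\inf_B f_t$) are correct but do not change the substance.
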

\begin{proof}
	Assume that  $\{ f_t : t \in T \}$  has a robust  minimum at  $\bar{x}$ on   $B:= \mathbb{B}(\bar{x},\eta)$. Pick  $ \epsilon \in (0,1)$ and  $\gamma \in (0,\min\{\eta/2,\epsilon/2\})$, since $\bar{x}$ is a  robust  minimum there exists some $t\in T$ such that $\inf\limits_{B} f_t \geq f(\bar{x})-\gamma^2 \geq f_t(\bar{x}) -\gamma^2$, so $| f_t(\bar{x}) - f(\bar{x})|\leq \gamma^2$ and $\bar{x}$ is a $\gamma^2$-minimum of $f_t +\delta_B$. Hence, by \emph{Ekeland's Variational Principle} (see, e.g., \cite{MR2144010}) there exists $x_\gamma \in \mathbb{B}(\bar{x}, \gamma)$ such that $| f_t(x_\gamma) -f_t(\bar{x})| \leq \gamma^2$ and $x_\gamma$ is a minimum of the function $f_t(\cdot)+\delta_B(\cdot) + \gamma \| \cdot - x_\gamma \| $, which implies that $f_t(\cdot)+ \gamma \| \cdot - x_\gamma \| $  attains a local minimum at $x_\gamma$. By  \cref{Prop:Properties}  \cref{Property:3.5} there exist sequences $(x_n,x_n^\ast) \in X\times X^\ast$ such that  $x_n^\ast \in  \hat{\sub} f_t(x_n)$,  $x_n \overset{f_t}{\to} x_\gamma$,  $x^\ast_n\overset{\| \cdot\|}{\to} \bar{x}^\ast$ with $  \bar{x}^\ast \in \gamma \mathbb{B}^\ast$. Then, take $n \in \N$ such that $|f_t(x_n) -f_t(x_\gamma)| \leq \gamma$, $\| x_n - x_\gamma\| \leq \gamma$ and  $0\in  \hat{\sub} f_t(x_n) + 2 \gamma \mathbb{B}^\ast $. Therefore, $ x_n \in \mathbb{B}(\bar{x},f_t, \epsilon)$,  $| f_t(\bar{x}) - f_t(x_n)| \leq \epsilon$, $| f(\bar{x}) - f_t(x_n)| \leq \epsilon$ and   $0\in  \hat{\sub} f_t(x_n) + \epsilon \mathbb{B}^\ast$;  to that end $0 \in \bigcup\{   \hat{\sub} f_t(x) : x\in \mathbb{B}(\bar{x},f_t(\bar{x}),\epsilon), \; t \in T_{\epsilon}(\bar{x})   \}    + \epsilon \mathbb{B}^\ast$.
\end{proof}
Now, we notice that, in particular, \cref{Lemma:SuficientCondition} shows that every minimum over a closed bounded set in a finite-dimensional space is necessarily a \emph{robust local minimum}. This fact, together with the representation of  \cref{Property:4}, helps us to understand the subgradients in terms of the definition of a \emph{robust  local minimum}. Also in an infinite-dimensional space, this compactness property can be forced using the $w^\ast$-topology. Consequently, we use \cref{TEO:RobustMininum} to give an upper-estimation of the subdifferential of the supremum function of an increasing family of functions.
\begin{proposition}\label{teo:sup:1}
	Let  $\{f_t : t \in T \}$  be an increasing family of lsc functions. Then for all $\bar{x} \in X$
	\begin{align}
	\hat{\sub} f(\bar{x}) \subseteq \bigcap\limits_{\epsilon >0} \cl^{w^\ast} \bigcup \bigg \{  	\hat{\sub} f_t(x) : x\in \mathbb{B}(\bar{x},f_t(\bar{x}), \epsilon), \; t\in T_{\epsilon}(\bar{x})        \bigg\}.
	\end{align}

\end{proposition}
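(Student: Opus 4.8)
The plan is to reduce the statement, by means of \cref{Property:4}, to the robust-minimum setting of \cref{TEO:RobustMininum}, and to extract the weak$^\ast$ closure from a finite-dimensional reduction that is forced on us because closed balls of an Asplund space need not be weakly compact. Fix $x^\ast\in\hat{\partial}f(\bar x)$ and $\epsilon>0$, and write $S_\epsilon$ for the set whose closure appears on the right-hand side. Since the weak$^\ast$ topology admits at $x^\ast$ a neighborhood base of sets $U_{F,\eta}=\{y^\ast: |\langle y^\ast-x^\ast,e\rangle|<\eta,\ e\in F\}$, with $F\subseteq X$ finite and $\eta>0$, it suffices to produce, for each such $F$ and $\eta$, a point of $S_\epsilon$ inside $U_{F,\eta}$. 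I put $L:=\operatorname{span}(F)$, a finite-dimensional subspace.

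Next I build a perturbed increasing family. Using \cref{Property:4} I fix an auxiliary $\epsilon'\in(0,\epsilon)$ and a Fermat tolerance $\delta\in(0,\epsilon)$ (to be calibrated at the very end so that $(\epsilon'+\delta)\max_{e\in F}\|e\|<\eta$) together with $\gamma\in(0,\epsilon')$ so that $x\mapsto f(x)-\langle x^\ast,x-\bar x\rangle+\epsilon'\|x-\bar x\|+\delta_{\mathbb{B}(\bar x,\gamma)}(x)$ attains its minimum at $\bar x$. I then set $\psi:=\epsilon'\|\cdot-\bar x\|-\langle x^\ast,\cdot-\bar x\rangle$ (convex and Lipschitz), $C:=\mathbb{B}(\bar x,\gamma)\cap(\bar x+L)$ (norm-compact, being closed and bounded in a finite-dimensional affine set), and $\tilde g_t:=f_t+\psi+\delta_C$. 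The family $\{\tilde g_t\}$ is again increasing and lsc with $\sup_t\tilde g_t=f+\psi+\delta_C$, and I claim $\bar x$ is a robust local minimum of it on $B:=\mathbb{B}(\bar x,\gamma)$. Indeed $\inf_B\sup_t\tilde g_t=\inf_C(f+\psi)=f(\bar x)$, since $\bar x$ minimizes $f+\psi$ on $\mathbb{B}(\bar x,\gamma)\supseteq C$; and $\sup_t\inf_B\tilde g_t=\sup_t\inf_C(f_t+\psi)$, which equals $\inf_C\sup_t(f_t+\psi)=f(\bar x)$ by applying \cref{Lemma:SuficientCondition} to the increasing family $\{f_t+\psi\}$ on the compact set $C$ (with the norm topology, where infcompactness on a compact set is automatic).

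With the robust local minimum secured, \cref{TEO:RobustMininum} yields, for the tolerance $\delta$, an index $t$ with $f(\bar x)\le f_t(\bar x)+\delta$ (note $\tilde g_s(\bar x)=f_s(\bar x)$ for every $s$, so the $\delta$-active sets of $\{\tilde g_t\}$ and $\{f_t\}$ coincide), a point $x$ interior to $\mathbb{B}(\bar x,\gamma)$ with $\|x-\bar x\|\le\delta$ and $|\tilde g_t(x)-f_t(\bar x)|\le\delta$, and $w^\ast\in\hat{\partial}\tilde g_t(x)$ with $\|w^\ast\|_\ast\le\delta$. Applying the Fuzzy Sum Rule \cref{Property:3} to the finite family $\{f_t,\psi,\delta_C\}$ then produces nets with $s_\alpha^\ast\in\hat{\partial}f_t(q_\alpha)$, $p_\alpha^\ast\in\partial\psi(u_\alpha)\subseteq-x^\ast+\epsilon'\mathbb{B}^\ast$, and $b_\alpha^\ast\in\hat{\partial}\delta_C(v_\alpha)\subseteq L^\perp$ (the last inclusion being \cref{Property:5} up to the translation by $\bar x$, valid because $v_\alpha$ is eventually interior to $\mathbb{B}(\bar x,\gamma)$), such that $q_\alpha\overset{f_t}{\to}x$, $u_\alpha,v_\alpha\to x$ and $s_\alpha^\ast+p_\alpha^\ast+b_\alpha^\ast\overset{w^\ast}{\to}w^\ast$.

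To finish I estimate, for $e\in F\subseteq L$,
\[
\langle s_\alpha^\ast-x^\ast,e\rangle=\langle s_\alpha^\ast+p_\alpha^\ast+b_\alpha^\ast,e\rangle-\langle p_\alpha^\ast+x^\ast,e\rangle-\langle b_\alpha^\ast,e\rangle,
\]
where the last term vanishes since $b_\alpha^\ast\in L^\perp$, the first converges to $\langle w^\ast,e\rangle$ (bounded by $\delta\|e\|$), and the middle is bounded by $\epsilon'\|e\|$; hence $\limsup_\alpha|\langle s_\alpha^\ast-x^\ast,e\rangle|\le(\delta+\epsilon')\|e\|<\eta$. For large $\alpha$ the point $q_\alpha$ lies within $\epsilon$ of $\bar x$ with $|f_t(q_\alpha)-f_t(\bar x)|\le\epsilon$ (using $\|x-\bar x\|\le\delta$, the smallness of $\delta,\epsilon'$, and $q_\alpha\overset{f_t}{\to}x$) and $t\in T_\delta(\bar x)\subseteq T_\epsilon(\bar x)$, so $s_\alpha^\ast\in S_\epsilon\cap U_{F,\eta}$, which is what was required; letting $F,\eta$ and then $\epsilon$ vary gives the claim. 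The step I expect to be the crux is the second paragraph: establishing that the subspace-restricted perturbed family $\{\tilde g_t\}$ genuinely has a robust local minimum at $\bar x$. This is precisely where the absence of weak compactness of balls is circumvented—one cannot invoke \cref{Lemma:SuficientCondition} on the full ball, so the minimax identity must be forced on the compact slice $C$, and one must verify that intersecting with $\bar x+L$ leaves the value $f(\bar x)$ unchanged. The other delicate point is the simultaneous calibration of the three small parameters $\epsilon'$, $\delta$ and $\eta$ so that the extracted index and point remain inside the $\epsilon$-tolerances defining $S_\epsilon$ while the subgradients land in $U_{F,\eta}$.
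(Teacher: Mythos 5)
Your proposal is correct and follows essentially the same route as the paper's proof: perturb via \cref{Property:4}, restrict to a compact slice of a finite-dimensional subspace chosen from a basic weak$^\ast$-neighborhood (the paper's $L$ with $L^\perp\subseteq V$ is your $\operatorname{span}(F)$), invoke \cref{Lemma:SuficientCondition} and \cref{TEO:RobustMininum} to find an approximate critical point of some perturbed $f_t$, and then undo the perturbation with the fuzzy sum rule \cref{Property:3} and the estimate \cref{Property:5} placing the indicator's subgradients in $L^\perp$. Your write-up is somewhat more explicit about the minimax verification and the parameter calibration, but the decomposition and the key lemmas are identical.
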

\begin{proof}
	Fix $x^\ast \in 	\hat{\sub} f (\bar{x})$, $V\in \mathcal{N}_{0}(w^\ast)$, $\epsilon >0$ and $L $ a finite-dimensional subspace of $X$ such that $L^\perp \subseteq V$,  so by \cref{Property:4} there exist a ball  $B:=\mathbb{B}(\bar{x},\eta)$   {such that the function}
	$\tilde{f}:=f-\langle x^\ast , \cdot - \bar{x}\rangle + \epsilon \| \cdot - \bar{x}\|  +\delta_{L\cap B} $
	{ attains its minimum at $\bar{x}$.}
	
 Hence, consider the family of functions $\tilde{f}_t := f_t - \langle x^\ast , \cdot - \bar{x}\rangle + \epsilon \| \cdot - \bar{x}\|  +\delta_{L\cap B}$. It is easy to see that the family is increasing, $\tilde{f}=\sup_{T}\tilde{f}_t$ and there exists some $t \in T$ such that $\tilde{f_t}$ is infcompact. Whence,  \cref{Lemma:SuficientCondition} shows that the family $\{\tilde{f}_t: t\in T \}$ has a  robust local minimum at $\bar{x}$, and   \cref{TEO:RobustMininum} implies
	\begin{align}\label{teo:sup:1:eq}
	0\in \bigcap\limits_{\gamma >0} \cl^{w^\ast}\bigg \{ \bigcup\{  	\hat{\sub} \tilde{f}_t(x) : x\in \mathbb{B}(\bar{x},\tilde{f}_t,\gamma), \; t\in T_{\gamma}(\{\tilde{f}_t\}_{t\in T},\bar{x})   \}        \bigg\}.
	\end{align}
	Now take  $\nu \in (0, \min\{ \epsilon/3,\eta/3\} )$ small enough such that $| \phi(w)-\phi(\bar{x})| \leq \epsilon/3$ for all $w\in \mathbb{B}(\bar{x},\nu)$,  so by \cref{teo:sup:1:eq} there exist $t\in T_{\nu}(\{\tilde{f}_t\}_{t\in T},\bar{x})$,   $x \in \mathbb{B}(\bar{x},\tilde{f}_t,\nu) $ and $w^\ast \in  	\hat{\sub} \tilde{f}_t(x)=\hat{\sub} (f -\phi + \delta_{B\cap L})(x)$  such that  $w^\ast \in x^\ast + V$. This implies that $x\in \mathbb{B}(\bar{x},f_t,\nu+\epsilon/3)$ and $t\in T_{\nu+\epsilon/3}(\{f_t\}_{t\in T},\bar{x})$.
	
	Now applying  \cref{Prop:Properties} \cref{Property:3,Property:5}  to  $ \tilde{f}_t$ we get the existence of points $u \in X$ and $u^\ast\in X^\ast$ such that  $u^\ast \in  	\hat{\sub} f_t(u)$, $u \in \mathbb{B}(x,f_t,\nu)$  and $u^\ast  \in w^\ast + L^\perp +V=w^\ast + V$. Therefore $t\in T_{\epsilon}(\{f_t\}_{t\in T},\bar{x})$, $u \in \mathbb{B}(\bar{x},f_t,\epsilon)$ and $x^\ast  \in u^\ast  + V+ V$.
\end{proof}	

{Now we present a fuzzy calculus rule for a not necessarily increasing family of functions; we bypass this assumption using the family of finite sets of the index set $T$, which is always ordered by inclusion}.

\begin{theorem}\label{THEOREM:FORMULA:SUPREMUM}
	Let $\{f_t : t \in T \}$ be an arbitrary family of lsc functions. Then for every $\bar{x}\in X$
	\begin{align}\label{THEOREM:FORMULA:SUPREMUM:FORMULA}
	\hat{\sub} f(\bar{x})\subseteq \bigcap\limits_{\epsilon >0} \cl^{w^\ast}\bigg \{ \bigcup\limits_{ \substack{F\in \mathcal{T}_{\epsilon}(\bar{x})\\ x' \in \mathbb{B}(\bar{x},f_F,\epsilon) }	} \bigcap\limits_{\gamma>0} \cl^{w^\ast} \{  \sum\limits_{t \in F } \lambda_t  \hat{\sub} f_t(x_t) : \hspace{-0.2cm}\begin{array}{c}
	x_t \in \mathbb{B}(x', f_t,\gamma),\\ \lambda \in \Delta (F,x',\gamma) \text{ and}  \\\# \lambda\leq \dim(X)+1 
	\end{array} \}  \bigg\}
	\end{align}
\end{theorem}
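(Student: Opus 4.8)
The plan is to deduce this non-increasing statement from the increasing case already settled in \cref{teo:sup:1}, using the standard device of replacing $\{f_t\}_{t\in T}$ by its finite maxima. Concretely, I would index a new family by $\Pf(T)$ ordered by inclusion and set $g_F:=f_F=\max_{s\in F}f_s$ for each $F\in\Pf(T)$. Since the union of two finite sets is finite, $(\Pf(T),\subseteq)$ is a directed set, and $F_1\subseteq F_2$ forces $g_{F_1}\le g_{F_2}$ pointwise, so $\{g_F\}_{F\in\Pf(T)}$ is an increasing family. Each $g_F$ is lsc as a finite maximum of lsc functions, and $\sup_{F\in\Pf(T)}g_F=\sup_{t\in T}f_t=f$ (the inequality $\le$ is trivial, while $g_{\{t\}}=f_t$ gives $\ge$). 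Hence \cref{teo:sup:1} applies verbatim to $\{g_F\}_{F\in\Pf(T)}$; note that no infcompactness needs to be checked, as that proposition assumes only that the family is increasing and lsc.

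Applying it yields
\begin{align*}
\hat{\sub} f(\bar x)\subseteq \bigcap_{\epsilon>0}\cl^{w^\ast}\bigcup\big\{\hat{\sub} g_F(x'):x'\in\mathbb{B}(\bar x,g_F,\epsilon),\ F\in T_\epsilon(\{g_F\},\bar x)\big\}.
\end{align*}
The key observation is that the $\epsilon$-active indices of the new family are exactly the $\epsilon$-active sets of the old one: by definition $T_\epsilon(\{g_F\},\bar x)=\{F\in\Pf(T):f(\bar x)\le g_F(\bar x)+\epsilon\}=\mathcal{T}_\epsilon(\bar x)$, and $\mathbb{B}(\bar x,g_F,\epsilon)=\mathbb{B}(\bar x,f_F,\epsilon)$. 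Thus the outer union over $(F,x')$ above already coincides with the outer union in \cref{THEOREM:FORMULA:SUPREMUM:FORMULA}, and it remains only to expand each inner term $\hat{\sub} g_F(x')=\hat{\sub} f_F(x')$.

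For that last step I would invoke \cref{prop:v} of \cref{Prop:Properties}, applied to the finite family $\{f_s\}_{s\in F}$ at the point $x'$, which gives exactly
\begin{align*}
\hat{\sub} f_F(x')\subseteq\bigcap_{\gamma>0}\cl^{w^\ast}\Big\{\sum_{t\in F}\lambda_t\,\hat{\sub} f_t(x_t):x_t\in\mathbb{B}(x',f_t,\gamma),\ \lambda\in\Delta(F,x',\gamma),\ \#\lambda\le\dim(X)+1\Big\}.
\end{align*}
Substituting this upper estimate for each $\hat{\sub} f_F(x')$ into the previous display, and using that both $\bigcup$ and $\cl^{w^\ast}$ are monotone under set inclusion, produces precisely \cref{THEOREM:FORMULA:SUPREMUM:FORMULA}.

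I expect no genuine obstacle: the argument is a clean composition of two already-established results. The only points demanding care are bookkeeping ones — verifying that $(\Pf(T),\subseteq)$ is directed and that $\{g_F\}$ is increasing with supremum $f$ (so that \cref{teo:sup:1} is legitimately applicable), and matching the active-index notation $T_\epsilon(\{g_F\},\bar x)$ against $\mathcal{T}_\epsilon(\bar x)$ while keeping the two nested families of balls (the outer centered at $\bar x$, the inner at $x'$) and their radii aligned with the statement. The monotonicity of the weak$^\ast$-closure under the substitution in the final step is routine but should be stated explicitly to justify pushing the \cref{prop:v} estimate through the outer closure.
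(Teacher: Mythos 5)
Your proposal is correct and follows essentially the same route as the paper's own proof: pass to the increasing family $\{f_F\}_{F\in\Pf(T)}$ ordered by inclusion, apply \cref{teo:sup:1}, identify $T_\epsilon(\{f_F\},\bar{x})$ with $\mathcal{T}_\epsilon(\bar{x})$, and then expand each $\hat{\sub}f_F(x')$ via \cref{prop:v} of \cref{Prop:Properties}. Your remark that no infcompactness hypothesis needs checking is also accurate, since \cref{teo:sup:1} assumes only that the family is increasing and lsc.
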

\begin{proof}
	Consider the set $\tilde{T}:=\Pf(T)$, ordered by $F_1 \preceq F_2$ if and only if $F_1 \subseteq F_2$, and the family of functions $\{ f_F :F\in \tilde{T} \} $ (recall that $f_F=\max_{s \in F} f_s$), then it is easy to see that the family $\{ f_F  : F \in \tilde{T}\}$ is an increasing family of functions and $\sup_{F\in \tilde{T} } f_F   =f$.   Let $x^\ast\in  \hat{\sub} f(\bar{x})$, thus by  \cref{teo:sup:1}  $$x^\ast \in \bigcap\limits_{\epsilon >0} \cl^{w^\ast}\bigg \{ \bigcup\{   \hat{\sub} f_F(x') : x'\in \mathbb{B}(\bar{x},f_F,\epsilon), \; F \in \tilde{T}_{\epsilon}(\bar{x})   \}        \bigg\}.$$
	 Now, if $w^\ast \in   \hat{\sub} f_F(x')$ for some $x'\in \mathbb{B}(\bar{x},f_F,\epsilon)$ and  $F \in \tilde{T}_{\epsilon}(\bar{x}) $, we get  $x' \in \mathbb{B}(\bar{x},\epsilon) $ and  $F\in \mathcal{T}_{\epsilon}(\bar{x})$, so  using  \cref{Prop:Properties}  \cref{prop:v} we get 
	$$w^\ast\in\bigcap\limits_{\gamma>0} \cl^{w^\ast} \bigg\{  \sum \lambda_t \hat{\sub}  f_{t}(x_t) : \begin{array}{c} x_t \in \mathbb{B}(x',f_t,\gamma), \lambda  \in \Delta(F, x',\gamma)\\
	\text{ and } \#\lambda\leq \dim(X) + 1  \end{array} \bigg\},$$  then \cref{THEOREM:FORMULA:SUPREMUM:FORMULA} holds.
\end{proof}

{Here, it is important to compare the above result with  \cite[Theorem 3.1  part ii)]{MR3033113}. In the mentioned result, only uniform Lipschitz continuous data was considered. Here, we extend this fuzzy calculus  to arbitrary lsc data functions. Since the comparison between both results involves some technical estimations, we prefer to write this as a corollary.}
{
\begin{corollary}
	Under the hypothesis of \cref{THEOREM:FORMULA:SUPREMUM} assume that the data function $f_t$ is uniformly locally Lipschitz at $\bar{x}$. Then, for each $x^\ast \in \hat{\sub} f(\bar{x})$, $V \in \mathcal{N}_0(w^*)$ and $\epsilon>0$ there exist $\lambda \in \Delta (T_\epsilon(\bar{x}))$ and $x_t\in \mathbb{B}(\bar{x},\epsilon)$ for all $t\in T_\epsilon(\bar{x})$  such that 
	\begin{align}
		x^\ast \in \sum\limits_{t\in T_\epsilon(\bar{x})} \lambda_t  \hat{\sub} f_t(x_t) + V
	\end{align}
	\end{corollary}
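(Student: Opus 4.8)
The plan is to deduce the clean formula from \cref{THEOREM:FORMULA:SUPREMUM} by exploiting the two standard consequences of uniform local Lipschitz continuity near $\bar x$: first, the supremum $f$ is itself $k$-Lipschitz on some ball $\mathbb B(\bar x,\epsilon_0)$, and second, every Fr\'echet subgradient of the data is uniformly bounded, $\hat\partial f_t(x)\subseteq k\mathbb B^\ast$ whenever $x\in\mathbb B(\bar x,\epsilon_0)$ and $t\in T$. This bound is the crucial enabler: it lets me control all approximation errors in norm, and since a sufficiently small norm ball $\rho\mathbb B^\ast$ sits inside any \emph{basic} $w^\ast$-neighborhood (because $|\langle x^\ast,x_i\rangle|\le\rho\|x_i\|$ for the finitely many functionals $x_i$ defining the neighborhood), it lets me absorb norm-small errors into the prescribed $V$. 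Concretely, I first fix $\epsilon>0$ and $V\in\mathcal N_0(w^\ast)$, replace $V$ by a smaller basic neighborhood, fix $\rho>0$ with $\rho\mathbb B^\ast\subseteq\frac14 V$, and use convexity and balancedness to write $V=\frac14 V+\frac14 V+\frac14 V+\frac14 V$.

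Next I choose auxiliary parameters $\epsilon',\gamma'$ with $(2+k)\epsilon'+\gamma'\le\epsilon$ and apply \cref{THEOREM:FORMULA:SUPREMUM}. Membership of $x^\ast$ in the $\epsilon'$-term of the outer $w^\ast$-closure produces, along a net from the union, a \emph{single} pair $F\in\mathcal T_{\epsilon'}(\bar x)$, $x'\in\mathbb B(\bar x,f_F,\epsilon')$ together with a point $y^\ast$ in the inner set $\bigcap_{\gamma}\cl^{w^\ast}S(F,x',\gamma)$ (where $S(F,x',\gamma)$ abbreviates the inner combination set) satisfying $y^\ast\in x^\ast+\frac14 V$. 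The decisive point is that $F$ is now \emph{fixed}, with finite cardinality $\#F$. I then select $\gamma\le\gamma'$ small enough that $k\gamma(1+\#F)\le\rho$, and exploit $y^\ast\in\cl^{w^\ast}S(F,x',\gamma)$ to extract an actual combination $\sum_{t\in F}\lambda_t z_t^\ast$ with $z_t^\ast\in\hat\partial f_t(x_t)$, $x_t\in\mathbb B(x',f_t,\gamma)\subseteq\mathbb B(x',\gamma)$, $\lambda\in\Delta(F,x',\gamma)$, and $\sum_{t\in F}\lambda_t z_t^\ast\in y^\ast+\frac14 V$.

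The analytic core is to turn this into a genuine simplex supported on $T_\epsilon(\bar x)$. Put $A:=\{t\in F:f_t(x')\ge f_F(x')-\gamma\}$ for the $\gamma$-active indices of $f_F$ at $x'$; the defining constraint of $\Delta(F,x',\gamma)$ forces $\lambda_t\le\gamma$ for $t\in F\setminus A$, so the discarded weight obeys $\sum_{t\in F\setminus A}\lambda_t\le\gamma\#F$ and $\|\sum_{t\in F\setminus A}\lambda_t z_t^\ast\|\le k\gamma\#F\le\rho$, hence lies in $\frac14 V$. For $t\in A$ I chain the Lipschitz estimates
\[
f_t(\bar x)\ge f_t(x')-k\epsilon'\ge f_F(x')-\gamma-k\epsilon'\ge f_F(\bar x)-\epsilon'-\gamma-k\epsilon'\ge f(\bar x)-(2+k)\epsilon'-\gamma\ge f(\bar x)-\epsilon,
\]
so $A\subseteq T_\epsilon(\bar x)$, and moreover $x_t\in\mathbb B(x',\gamma)\subseteq\mathbb B(\bar x,\epsilon'+\gamma)\subseteq\mathbb B(\bar x,\epsilon)$. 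With $S:=\sum_{t\in A}\lambda_t\in[1-\gamma(1+\#F),\,1+\gamma]$, which is positive for small $\gamma$, I set $\mu_t:=\lambda_t/S$ to obtain $\mu\in\Delta(T_\epsilon(\bar x))$; the renormalization error $\|\sum_{A}\lambda_t z_t^\ast-\sum_{A}\mu_t z_t^\ast\|\le k|1-S|\le k\gamma(1+\#F)\le\rho$ again lies in $\frac14 V$. Summing the four displacements $x^\ast-y^\ast$, $y^\ast-\sum_F\lambda_t z_t^\ast$, the discarded mass and the renormalization yields $x^\ast\in\sum_{t\in T_\epsilon(\bar x)}\mu_t\hat\partial f_t(x_t)+V$.

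I expect the main obstacle to be the bookkeeping of the nested closures and quantifiers: one must fix $F$ (finite, but a priori of unbounded cardinality) \emph{before} choosing $\gamma$, precisely so that the discarded weight $\gamma\#F$ becomes genuinely negligible, and one must verify that each of the four error contributions — the two $w^\ast$-extraction steps, the low-weight terms that are dropped, and the renormalization — truly lands inside the prescribed neighborhood $V$. The passage from norm-small errors to membership in $V$ hinges on first replacing $V$ by a basic $w^\ast$-neighborhood, and the entire scheme rests on the uniform bound $\hat\partial f_t\subseteq k\mathbb B^\ast$ furnished by the Lipschitz hypothesis.
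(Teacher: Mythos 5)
Your proof is correct and follows essentially the same route as the paper's: apply \cref{THEOREM:FORMULA:SUPREMUM}, use the uniform Lipschitz constant to chain $f_t(\bar x)\ge f_t(x')-k\epsilon'\ge\dots\ge f(\bar x)-\epsilon$ for the active indices at $x'$, renormalize the multipliers over that active set, and absorb the norm-small discarded and renormalization errors (bounded via $\hat\partial f_t\subseteq k\mathbb B^\ast$) into the $w^\ast$-neighborhood $V$. If anything, your bookkeeping is tighter than the paper's: you make explicit why $F$ can be fixed before $\gamma$ is shrunk (so that $\gamma\#F$ is controllable), a point the paper passes over with ``we can assume $\gamma\cdot\#F\le\epsilon$,'' and you calibrate $\epsilon',\gamma'$ up front so as to land exactly in $T_\epsilon(\bar x)$ rather than in $T_{\epsilon(K+3)}(\bar x)$ followed by a final rescaling.
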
 }
	\begin{proof}
		Consider $K$ as the constant of uniform Lipschitz continuity. Pick  $x^\ast \in \hat{\sub} f(\bar{x})$, and by \cref{THEOREM:FORMULA:SUPREMUM} we have that 
		\begin{align}\label{equation01}
				x^\ast \in \sum\limits_{t \in F } \lambda_t  \hat{\sub} f_t(x_t)  + V 
			\end{align}
			for some $F\in \mathcal{T}_{\epsilon}(\bar{x})$, a point $x' \in \mathbb{B}(\bar{x},f_F,\epsilon) $, points $x_t \in \mathbb{B}(x', f_t,\gamma)$ and $\lambda  \in \Delta (F,x',\gamma) $, we can assume that $\gamma \cdot \#F  \leq \epsilon$.   First $\| x_t - \bar{x} \|\leq \| x_t- x'\| +\| \bar{x} - x'\| \leq \epsilon+\gamma$.  Second $F_\epsilon(x') \subseteq T_{\epsilon (K+3)}(\bar{x})$, this is because
	\begin{align*}
		f_t(\bar{x}) &\geq f_t(x') - \epsilon K \geq f_F(x')  - \epsilon( K +1) \geq f_F(\bar{x}) - \epsilon( K +2)\\& \geq    f(\bar{x}) - \epsilon( K +3).
		\end{align*}
			Then, let us define $\tilde{\lambda} : T\to \mathbb{R}$ by
			\begin{align*}
				\tilde{\lambda}_t := \left\{ \begin{array}{cl}
					\frac{\lambda_{t}}{  \sum\limits_{t\in F_\gamma(x') } \lambda_t 	} &\text{ if } t\in F_\gamma (x') ,	\\
				0 , &\text{ otherwise. }
				\end{array}\right.
				\end{align*} 
		
			It is easy to see that $\tilde{\lambda} \in \Delta (T_{\epsilon(K+3)}(\bar{x}))$. Furthermore,  we claim that 
			\begin{align}\label{equation02}
				x^\ast \in  \sum_{t\in T} \tilde{\lambda}_t \partial f_t( x_t	)  +  3K\epsilon \mathbb{B} + V.
				\end{align}
			
		Indeed, by \cref{equation01}  there are $x^\ast_t \in \hat{ \partial} f_t(x_t)$ and $v^\ast \in V$ such that $x^\ast = \sum \lambda_t x_t^\ast  + v^\ast$, then
	\begin{align*}
		\| \sum\limits_{t\in T} \lambda_t x_t^\ast  - \sum\limits_{t\in T} \tilde{\lambda}_t x_t^\ast \|   = &	\| \sum\limits_{  t\in F_\gamma (x') } (\lambda_t  -  \tilde{\lambda}_t ) x_t^\ast  +  \sum_{  F \backslash F_\gamma (x') }  \lambda_t  x_t^\ast \|   \\
		\leq &    \bigg| \sum_{t\in F_\gamma(x')} \lambda_t  - 1 \bigg| K  + K \epsilon \leq   \bigg| \sum_{t\in F} \lambda_t  - 1 \bigg|K  + 2 \epsilon K\\\leq &  3K\epsilon.
		\end{align*}
	Consequently, \cref{equation02}. Finally, taking $\epsilon$ small enough we have that \cref{equation02} implies \cref{equation01}.
		\end{proof}

\section{Limiting subdifferential of pointwise supremum}\label{limitingSub}
This section is divided into two subsections. The first one concerns the study of the notion of the limiting subdifferential in finite-dimensional Banach spaces.  This setting is obviously motivated by the theory of \emph{semi-infinite programming}; in this scenario we  can obtain a better estimation of the limiting sequences obtained in \cref{THEOREM:FORMULA:SUPREMUM}. This result  is given in \cref{Lema51}; using this technical lemma, we focus on the particular case when the set $T$ is a subset of a compact metric space  (see \cref{teoremcompactindex}).  The second one corresponds to the infinite-dimensional setting; this subsection begins with a result concerning a \emph{fuzzy intersection rule for the normal cone of an arbitrary intersection of sets} (see \cref{TEOCONES}), which generalizes \cite[Theorem 5.2]{MR3000580}. Later the  main result of this subsection is given  in  \cref{Mordukhovich:Separable}, where we  explore the definition of \emph{sequential normal epi-compactness} (see, e.g., \cite{MR2191744}) and with this we  extend  \cite[Theorem 3.2]{MR3033113} (see \cref{TEO:MORD:NGH}).

\subsection{Finite-dimensional spaces}\label{limitingSubfinite}
 In this subsection $\hat{ \partial}$, $\partial$ and $\partial^\infty$ mean the Fr\'echet subdifferential, the limiting subdifferential  and the singular  limiting subdifferential, respectively. 
\begin{lemma}\label{Lema51}
	Consider  $\gamma_k \to 0$ and $x^\ast\in \partial f(x)$ and $y^\ast\in \sub^\infty f (x)$.  Then  there are sequences $\eta_k \to 0^+$, $\{t_{i,k}\}=F_k  \in \Pf(T)$, $\{t^\infty_{i,k}\}=F^\infty_k  \in \Pf(T)$  with  $\#F_k  \leq   \dim(X) + 1$, $\#F^\infty_k  \leq   \dim(X) + 1$,  $x'_k \to x$, $y'_k \to x$, $x_{i,k} \to x$, $y_{i,k} \to x$, $ \lambda_{i,k} \in \Delta(F_k, x'_k,\gamma_k)$, $ \lambda^\infty_{i,k} \in \Delta(F^\infty_k, y'_k,\gamma_k)$ such that:
	\begin{enumerate}[label={\roman*)},ref={\roman*)}]
		\item $x^\ast= \lim_{k\to \infty} \sum_{i\in F_k} \lambda_{i,k} \cdot  x^\ast_{i,k} $,  $y^\ast= \lim_{k\to \infty} \eta_k \sum_{i\in F_k} \lambda^\infty_{i,k} \cdot  y^\ast_{i,k} $,
		\item $\lim_{k\to \infty}  f_{F_k}(x_k') = f(x)$,  $\lim_{k\to \infty} f_{ F^\infty_k}(y_k') = f(x)$,
		\item $\lim\left| f_{t_{i,k}}(x_{i,k})- f_{t_{i,k} }(x_k') \right| =0$ and  $\lim\left| f_{t^\infty_{i,k}}(y_{i,k})- f_{t^\infty_{i,k} }(y_k') \right| =0$ for all $i$.
	\end{enumerate}

	Moreover (by passing to a subsequence) one of the following conditions holds.
	\begin{enumerate}[label={(\Alph*)},ref={(\Alph*)}]
		\item\label{Lema51Parta} There exists $n_1 \in \N$ with $n_1 \leq  \dim(X)+1 $ such that $\lambda_{i,k} \overset{k \to \infty}{\longrightarrow} \lambda_i > 0$, $x^\ast_{i,k} \overset{k \to \infty}{\longrightarrow} x_i^\ast$, $\lim f_{t_{i,k}}(x_{i,k}) = f(x)$ for $i \leq n_1$   and $\lambda_{i,k} \overset{k \to \infty}{\longrightarrow}  0$, $\lambda_{i,k}\cdot x^\ast_{i,k} \overset{k \to \infty}{\longrightarrow} x_i^\ast$ for $n_1 < i \leq n$, 
		
		and  $x^\ast=\sum\limits_{i=1}^{n_1} \lambda_i x^\ast_i + \sum\limits_{i>n_1}^{n} x_i^\ast$, or
		\item \label{Lema51Partb} There are $\nu_k \to 0$ such that $\nu_k \cdot \lambda_{i,k}\cdot x^\ast_{i,k} \overset{k \to \infty}{\longrightarrow} x_i^\ast$ and $\sum\limits_{i=1}^{n_1}x^\ast_i =0$ with not all $x_i^\ast$ equal to zero.
		
	\end{enumerate}
	and (up to a subsequence) one of the following conditions holds.
	\begin{enumerate}[label={(\Alph*$^\infty$)},ref={(\Alph*$^\infty$)}]
		\item\label{Lema51Partainf} There exists $n_2 \in \N$ with $n_2 \leq  \dim(X)+1 $ such that $\lambda^\infty_{i,k} \overset{k \to \infty}{\longrightarrow} \lambda^\infty_i > 0$, $y^\ast_{i,k} \overset{k \to \infty}{\longrightarrow} y_i^\ast$, $\lim f_{t^\infty_{i,k}}(y_{i,k}) = f(x)$ for $i \leq n_2$   and $\lambda^\infty_{i,k} \overset{k \to \infty}{\longrightarrow}  0$, $\lambda^\infty_{i,k}\cdot y^\ast_{i,k} \overset{k \to \infty}{\longrightarrow} y_i^\ast$ for $n_2 < i \leq n$, 
		
		and  $y^\ast=\sum\limits_{i=1}^{n_2} \lambda^\infty_i y^\ast_i + \sum\limits_{i>n_2}^{n} y_i^\ast$, or
		\item \label{Lema51Partbinf} There are $\nu_k \to 0$ such that $\nu_k \cdot\eta_k\cdot \lambda^\infty_{i,k}\cdot y^\ast_{i,k} \overset{k \to \infty}{\longrightarrow} y_i^\ast$ and $\sum\limits_{i=1}^{n_1}y^\ast_i =0$ with not all $x_i^\ast$ equal to zero.
	
	\end{enumerate}
\end{lemma}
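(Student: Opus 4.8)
The plan is to reduce everything to the fuzzy calculus rule of \cref{THEOREM:FORMULA:SUPREMUM} combined with a compactness argument that is available because $X$ is finite-dimensional. First I would unwind the definitions of the limiting and singular subdifferentials: by definition there are Fr\'echet subgradients $x_n^\ast \in \hat{\partial} f(u_n)$ with $u_n \overset{f}{\to} x$ and $x_n^\ast \to x^\ast$ (in finite dimensions weak$^\ast$ convergence is norm convergence), and likewise $y^\ast = \lim \mu_n z_n^\ast$ with $z_n^\ast \in \hat{\partial} f(v_n)$, $v_n \overset{f}{\to} x$ and $\mu_n \to 0^+$. To each $x_n^\ast$ (resp. $z_n^\ast$) I apply \cref{THEOREM:FORMULA:SUPREMUM} with approximation parameters $\epsilon_n,\gamma_n \to 0$, and then run a diagonal extraction to produce the objects in the statement, setting $\eta_k := \mu_k$ for the singular part.

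Concretely, from \cref{THEOREM:FORMULA:SUPREMUM} together with \cref{Prop:Properties} \cref{prop:v} I obtain, for each $k$, a finite set $F_k \in \mathcal{T}_{\epsilon_k}(x)$ with $\#F_k \leq \dim(X)+1$, a center $x_k' \in \mathbb{B}(x, f_{F_k},\epsilon_k)$, evaluation points $x_{i,k} \in \mathbb{B}(x_k', f_{t_{i,k}},\gamma_k)$, multipliers $\lambda_{i,k} \in \Delta(F_k, x_k',\gamma_k)$ and Fr\'echet subgradients $x_{i,k}^\ast \in \hat{\partial} f_{t_{i,k}}(x_{i,k})$ for which $\|x_k^\ast - \sum_i \lambda_{i,k} x_{i,k}^\ast\|$ is as small as I wish. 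Since the ball radii tend to $0$, this forces $x_k' \to x$ and $x_{i,k}\to x$, and the defining inequalities of $\mathcal{T}_{\epsilon_k}$ and of $\mathbb{B}(\cdot,f_t,\gamma_k)$ give properties ii) and iii) in the limit; property i) is merely the chosen approximation. The singular computation is identical, carrying the vanishing factor $\eta_k$ so that $\eta_k \sum_i \lambda_{i,k}^\infty y_{i,k}^\ast \to y^\ast$.

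It then remains to establish the dichotomies. After padding so that every $F_k$ has exactly $\dim(X)+1$ indices, the multipliers are bounded with $\sum_i \lambda_{i,k}\to 1$, so passing to a subsequence $\lambda_{i,k}\to\lambda_i\geq 0$; I relabel so that $\lambda_i>0$ exactly for $i\leq n_1$. I then split on whether the family $\{\lambda_{i,k} x_{i,k}^\ast\}$ stays bounded. If it does, I extract convergent subsequences: for $i\leq n_1$ the bound $\lambda_{i,k}\to\lambda_i>0$ yields $x_{i,k}^\ast\to x_i^\ast$, while for $i>n_1$ I keep only $\lambda_{i,k}x_{i,k}^\ast\to x_i^\ast$, giving the decomposition of \cref{Lema51Parta}; the activity $f_{t_{i,k}}(x_{i,k})\to f(x)$ for $i\leq n_1$ follows because a multiplier bounded away from $0$ eventually exceeds $\gamma_k$, which by the definition of $\Delta(F_k,x_k',\gamma_k)$ forces $t_{i,k}$ into the $\gamma_k$-active set of $\{f_s:s\in F_k\}$ at $x_k'$, and this combined with ii) and iii) sandwiches $f_{t_{i,k}}(x_{i,k})$ between quantities converging to $f(x)$. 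If the family is unbounded, I set $\nu_k := (\max_i \|\lambda_{i,k} x_{i,k}^\ast\|)^{-1}\to 0$, so that $\nu_k\lambda_{i,k}x_{i,k}^\ast\to x_i^\ast$ along a subsequence, whence $\sum_i x_i^\ast = \lim_k \nu_k(\sum_i \lambda_{i,k}x_{i,k}^\ast) = 0$ while the maximizing index has a unit-norm, hence nonzero, limit; this is \cref{Lema51Partb}. The same bounded/unbounded alternative applied to $\{\eta_k\lambda_{i,k}^\infty y_{i,k}^\ast\}$ yields \cref{Lema51Partainf}/\cref{Lema51Partbinf}.

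The hard part will be the bookkeeping in the diagonal extraction of the second paragraph: \cref{THEOREM:FORMULA:SUPREMUM} delivers membership inside two nested weak$^\ast$-closures of the form $\bigcap_\epsilon \cl^{w^\ast}\,\bigcup\,\bigcap_\gamma \cl^{w^\ast}\{\cdots\}$, and turning this into genuine sequences with the stated convergences requires choosing $\epsilon_k$ and $\gamma_k$ consistently and peeling off both closures. Finite-dimensionality (weak$^\ast=$ norm, together with sequential compactness of bounded sets) is exactly what makes this routine once the parameters are fixed; the case analyses (A)/(B) and (A$^\infty$)/(B$^\infty$) are then standard compactness arguments on the coefficient and subgradient sequences.
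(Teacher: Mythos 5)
Your proposal is correct and follows essentially the same route as the paper: reduce to \cref{THEOREM:FORMULA:SUPREMUM} applied along the defining sequences of $\partial f(x)$ and $\partial^\infty f(x)$, use finite-dimensional compactness to extract convergent (sub)sequences of multipliers and subgradients, and split the dichotomies on whether $\{\lambda_{i,k}x_{i,k}^\ast\}$ stays bounded, normalizing by $\bigl(\max_i\|\lambda_{i,k}x_{i,k}^\ast\|\bigr)^{-1}$ in the unbounded case. The activity argument for $i\leq n_1$ (a multiplier bounded away from zero eventually exceeds $\gamma_k$, forcing $\gamma_k$-activity and the sandwich $f_{t_{i,k}}(x_k')+\gamma_k\geq f_{F_k}(x_k')\geq f_{t_{i,k}}(x_k')$) is exactly the one used in the paper.
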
 

	\begin{proof}
	Define $N:=\dim(X)+1$ and consider $x^\ast\in  \sub f(x)$ ($y^\ast \in \sub^\infty f(x)$, resp.), so (by definition) there exist $x_k \overset{f}{\to} x$  and $x_k^\ast \in \hat{ \partial} f(x_k)$ ($y_k \overset{f}{\to} x$, $\eta_k$ and $y_k^\ast \in \hat{ \partial} f(y_k)$, resp.) such that $x_k^\ast\to x^\ast$ ($\eta_k y_k^\ast\to y^\ast$, resp.). Whence, by  \cref{THEOREM:FORMULA:SUPREMUM}, there exist  $x'_k \in \mathbb{B}(x_k,\gamma_k)$ and $F_k=\{ t_{i,k} \}_{k=1}^N\subseteq T$, with $|f_{F_k}(x_k') -f(x_k)|\leq \gamma_k$ along with elements  $x_{{t_{i,k}}} \in \mathbb{B}(x'_k, f_{t_{i,k}},\gamma_k)$ and $z_k^\ast =\sum_{i=1}^N \lambda_{{t}_{i,k}} x^\ast_{ t_{ i,k }}$ with $\| z_k^\ast -x_k^\ast\|_{\ast} \leq \gamma_k$, $(\lambda_{k,i})\in  \Delta(F_k, x'_k,\gamma_k)$ and $x^\ast_{t_{i,k} } \in \hat{\sub} f_{t_{i,k}} (x_{i,k})$. Hence, $x^\ast= \lim\limits_{k\to \infty} \sum_{i\in F_k} \lambda_{i,k} \cdot  x^\ast_{i,k}$, $	\lim\limits_{k\to \infty}  f_{F_k} (x_k') = f(x)$ and $	\lim\limits_{k\to \infty} \left(  f_{t_{i,k}}(x_{i,k})- f_{t_{i,k} }(x_k') \right) =0$. Similarly, for the case $y^*\in { \partial}^\infty f(x)$, there exist $y'_k \in \mathbb{B}(y_k,\gamma_k)$ and $F^\infty_k=\{ t^\infty_{i,k} \}_{k=1}^N\subseteq T$, with $|f_{F^\infty_k}(x_k') -f(y_k)|\leq \gamma_k$ along with elements  $y_{{t_{i,k}}} \in \mathbb{B}(y'_k, f_{t_{i,k}},\gamma_k)$ and $w_k^\ast =\sum_{i=1}^N \lambda^\infty_{{t}_{i,k}} \eta_k y^\ast_{ t_{ i,k }}$ with $\| w_k^\ast -y_k^\ast\|_{\ast} \leq \gamma_k$, $(\lambda^\infty_{k,i})\in  \Delta(F^\infty_k, y'_k,\gamma_k)$ and $y^\ast_{t_{i,k} } \in \hat{\sub} f_{t^\infty_{i,k}} (y_{i,k})$.

	Now, we focus on the case $x^* \in \sub f(x)$; by passing to a subsequence, we have that $\lambda_{i,k} \to \lambda_i$ with $(\lambda_i) \in \Delta(\{1,...,N\})$ and (relabeling  it  if necessary) we may assume that $\lambda_k\neq 0$ for all $i=1,..,n_1$ and $\lambda_k=0$ for all $i=n_1 +1,...,N$. 
	
	On the one hand if $\sup\{  \| \lambda_{i,k} x^\ast_{i,k} \|_{\ast} : i=1,...,N;\;  k\in \N\}<+\infty$ (up to a subsequence) we can assume that $ \lambda_{i,k} x^\ast_{i,k}  \to \lambda_{i} x^\ast_i$ for all $i=1,...,n_1$ and $\lambda_{i,k} x^\ast_{i,k} \to x^\ast_i$ for all $i=n_1 + 1,..., N$, therefore $x^\ast=\sum\limits_{i=1}^{n_1} \lambda_i x^\ast_i + \sum\limits_{i>n_1}^{n} x_i^\ast$. Next,  we claim that $\lim f_{t_{i,k}}(x_{i,k}) = f(x)$  for  all $i=1,...,n_1$.  Indeed, define $\gamma:= \min\{ \lambda_{i}/2 : i=1,...,n_1 \}$, then for all $k$ (large enough) such that $\gamma_k \leq \gamma$ and $\lambda_k >\gamma$  (recall $t_{k,i}\in  \Delta(F_k,x'_k, \gamma_n)$) we have that
	$$ f_{t_{i,k}}(x_k')  + \gamma_k \geq \max_{s \in F_k} f_s(x_k')\geq f_{t_{i,k}}(x_k'),$$
	so, taking   the limits we obtain that  
	$$\lim_{k\to \infty} f_{t_{i,k}}(x_k') \geq \lim_{k\to \infty} \max_{s \in F_k} f_s(x_k')=f(x) \geq \lim_{k\to \infty} f_{t_{i,k}}(x_k'),$$
	which implies the desired conclusion.
	
	On the other hand, if   $\sup\{  \| \lambda_{i,k} x^\ast_{i,k} \|_{\ast} : i=1,...,N;\;  k\in \N\}=+\infty$ (by passing to a subsequence)  $\eta_k := \left( \max\limits_{i=1,...,k}  \| \lambda_{i,k} x^\ast_{i,k} \|_{\ast} \right) ^{-1} \to 0$ and  (w.l.o.g.) $\eta_k \lambda_{i,k} x^\ast_{i,k} \to x^\ast_i$ for all $i=1,...,N$, which implies that   $\sum\limits_{i=1}^{n_1}x^\ast_i =0$ with not all $x_i^\ast$ equal to zero.
	
	The case $y^\ast \in \sub^\infty f(x)$ follows similar arguments, so we omit the proof.
\end{proof}

Now we are going to apply the above result to a framework, where the  functions $f_t$'s represent a control in a region. We assume that $T$ is contained in a metric space and $\overline{T}$ is compact. For this reason we introduce the following definitions.

 A family of lsc  functions    $\{ f_t :   t\in T \}$  is said to be  \emph{continuously subdifferentiable at $x$} with respect to $\hat{ \partial}$ provided that for every sequence $T\times X\times [0,+\infty) \ni(t_n, x_n,\lambda_n) \to (t,x,\lambda) \in T\times X\times [0,+\infty)$ and points $w_n^\ast \in \hat{\partial} f_{t_n}(x_n)$ with $\lambda_n w^\ast_n \to w^\ast$ one has 
\begin{align*}
	w^\ast\in 	\lambda \circ \partial f_t(x) :=\left\{ \begin{array}{cl}
		\lambda \partial f_{t}(x) &\text{ if } \lambda >0,\\
		\partial^\infty f_{t}(x)&\text{ if } \lambda =0,
	\end{array}\right.
\end{align*}
{To our knowledge, the next definition was introduced  in \cite{MR2384963}, where the authors studied generalized notions of differentiation for  parameter-dependent set valued maps and mappings}. For a point $x \in X$ and $t\in \overline{T}\backslash T$	 we define the \emph{extended  subdifferential} and  the \emph{extended singular subdifferential} at $(t,x)$ as
\begin{align*}
	\partial f_t (x) :=& \bigg\{ x^\ast \in X^\ast :\begin{array}{c} 
		\exists t_k  \in T,\, t_k \to t,\, x_{k } \to x,\;x_k^\ast \in \hat{ \partial} f_{t_k}(x_k) \\
		\text{ s.t. } f_{t_k}(x_k)  \to f(x),\; \text{ and } x^\ast_k \to x^\ast
	\end{array} \bigg\},\\
	\partial^\infty f_t (x) :=& \bigg\{ x^\ast \in X^\ast :\begin{array}{c}
		\exists t_k  \in T,\, t_k \to t,\, \eta_k \to 0^+,\;  x_{k } \to x,\;x_k^\ast \in \hat{ \partial} f_{t_k}(x_k)\\
		\text{ s.t. } \limsup f_{t_k}(x_k)  \leq f(x),\; \text{ and } \eta_k x^\ast_k \to x^\ast
	\end{array} \bigg\},
\end{align*}
respectively.  Finally, we denote the \emph{extended active index set at $x$} by $\overline{T}(x)=T(x) \cup (\overline{T}\backslash T)$.

\begin{theorem}\label{teoremcompactindex}
	Consider  a family of lsc  functions $\{ f_t :   t\in T \}$ where $T$ is a subset of a metric space and $\overline{T}$ is compact. Assume that the following conditions hold at a point $\bar{x}$
		\begin{enumerate}[label={(\alph*)},ref={(\alph*)}]
		\item\label{Teo2a} For every $\bar{t} \in T$, $\limsup\limits_{(t,x) \to (\bar{t},\bar{x}) }  f_t(x) \leq f_t(\bar{x})$.
		\item\label{Teo2b}  The family is $\{ f_t :   t\in T \}$ continuously subdifferentiable at $\bar{x}$.
		\item\label{Teo2c}  The set $\co \left(  \bigcup_{t\in\overline{T}} \sub^\infty f_t(\bar{x}) \right) $ does not contain lines. 
	\end{enumerate}
	Then 
	\begin{align*}
	\sub f(\bar{x}) \subseteq&  \co \bigg( \bigcup\limits_{t \in \overline{T}(\bar{x}) }\sub f_t(\bar{x}) \bigg)  +\co \bigg( \bigcup\limits_{t \in \overline{T}} \sub^\infty f_t(\bar{x}) \bigg), \text{ and}\\
	\sub^\infty f(\bar{x}) \subseteq   &   \co \bigg( \bigcup\limits_{t \in \overline{T}} \sub^\infty f_t(\bar{x}) \bigg).
	\end{align*} 
\end{theorem}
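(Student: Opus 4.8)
The plan is to derive both inclusions from the finite convex-combination representation furnished by \cref{Lema51}, reading off the limiting data it produces, identifying the resulting limit subgradients through \cref{Teo2b} and the extended (singular) subdifferentials, and using \cref{Teo2c} only to discard the degenerate alternatives. Throughout, compactness of $\overline{T}$ (in a metric space, so sequential) is used to extract, from the at most $\dim(X)+1$ indices in play, convergent subsequences $t_{i,k}\to t_i\in\overline{T}$.

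First I would prove the singular inclusion, which is the cleaner warm-up. Fix $y^\ast\in\sub^\infty f(\bar{x})$ and apply \cref{Lema51}, which writes $y^\ast$ as a limit of scaled convex combinations $\eta_k\sum_i\lambda^\infty_{i,k}y^\ast_{i,k}$ with $y^\ast_{i,k}\in\hat{\sub}f_{t^\infty_{i,k}}(y_{i,k})$ and $\eta_k\to0^+$. Since the total scaling vanishes, every limit piece is a singular object: when $t_i\in T$ I invoke continuous subdifferentiability \cref{Teo2b} with vanishing multiplier to land in $\sub^\infty f_{t_i}(\bar{x})$, and when $t_i\in\overline{T}\setminus T$ I verify the defining conditions of the extended singular subdifferential, where the only nontrivial point is $\limsup_k f_{t^\infty_{i,k}}(y_{i,k})\le f(\bar{x})$; this follows from parts ii) and iii) of \cref{Lema51} together with $f_{t^\infty_{i,k}}(y'_k)\le f_{F^\infty_k}(y'_k)$. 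In alternative \cref{Lema51Partainf} this exhibits $y^\ast$ as a finite sum of elements of the cones $\sub^\infty f_{t_i}(\bar{x})$; since $\co\big(\bigcup_{t\in\overline{T}}\sub^\infty f_t(\bar{x})\big)$ is a convex cone, hence closed under addition and nonnegative scaling, the whole sum lies in it. Alternative \cref{Lema51Partbinf} instead produces a nontrivial relation $\sum_i y^\ast_i=0$ with each $y^\ast_i$ in that convex cone; picking $y^\ast_{i_0}\ne0$ yields both $y^\ast_{i_0}$ and $-y^\ast_{i_0}=\sum_{i\ne i_0}y^\ast_i$ in the cone, i.e. a whole line, contradicting \cref{Teo2c}. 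Thus only \cref{Lema51Partainf} survives and the second inclusion holds.

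The first inclusion follows the same template applied to $x^\ast\in\sub f(\bar{x})$, now using the dichotomy \cref{Lema51Parta}--\cref{Lema51Partb}. The new feature is the presence of genuine (non-singular) pieces: for the indices $i\le n_1$ with $\lambda_{i,k}\to\lambda_i>0$ and $\lim_k f_{t_{i,k}}(x_{i,k})=f(\bar{x})$, I identify $x^\ast_i\in\sub f_{t_i}(\bar{x})$, via \cref{Teo2b} with multiplier $1$ if $t_i\in T$ and via the extended subdifferential if $t_i\in\overline{T}\setminus T$. Here condition \cref{Teo2a} is exactly what upgrades $\lim_k f_{t_{i,k}}(x_{i,k})=f(\bar{x})$ to the activeness $f_{t_i}(\bar{x})=f(\bar{x})$, so that $t_i\in\overline{T}(\bar{x})$; combined with $\sum_{i\le n_1}\lambda_i=1$ this places $\sum_{i\le n_1}\lambda_i x^\ast_i$ in $\co\big(\bigcup_{t\in\overline{T}(\bar{x})}\sub f_t(\bar{x})\big)$. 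The remaining indices $i>n_1$ carry $\lambda_{i,k}\to0$, so $\lambda_{i,k}x^\ast_{i,k}\to x^\ast_i\in\sub^\infty f_{t_i}(\bar{x})$ (the function-value bound being checked as before), and their sum lies in the convex cone $\co\big(\bigcup_{t\in\overline{T}}\sub^\infty f_t(\bar{x})\big)$; adding the two parts gives $x^\ast$ in the asserted sum. The degenerate alternative \cref{Lema51Partb} is excluded by \cref{Teo2c} through the same line argument.

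The main obstacle I anticipate is bookkeeping rather than a single hard estimate: one must match each scaled limit $\lambda_{i,k}x^\ast_{i,k}$ (or $\eta_k\lambda^\infty_{i,k}y^\ast_{i,k}$) to the correct subdifferential, confirm the function-value conditions $\lim_k f_{t_{i,k}}(x_{i,k})=f(\bar{x})$ and $\limsup_k f_{t_{i,k}}(x_{i,k})\le f(\bar{x})$ needed to legitimately enter the extended (singular) subdifferentials when $t_i$ escapes $T$, and track which multipliers persist in the limit. The only genuinely structural inputs are that $\co\big(\bigcup_{t\in\overline{T}}\sub^\infty f_t(\bar{x})\big)$ is a convex cone closed under sums and that \cref{Teo2c} forbids lines in it; together these eliminate the unbounded alternatives \cref{Lema51Partb} and \cref{Lema51Partbinf}.
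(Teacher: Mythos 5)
Your proposal is correct and follows essentially the same route as the paper: apply \cref{Lema51}, extract convergent subsequences $t_{i,k}\to t_i\in\overline{T}$ by compactness, rule out the degenerate alternatives \cref{Lema51Partb} and \cref{Lema51Partbinf} via the no-lines hypothesis \cref{Teo2c}, and identify each limit piece through \cref{Teo2a}, \cref{Teo2b} and the extended (singular) subdifferentials according to whether $t_i\in T$ or $t_i\in\overline{T}\setminus T$. You in fact spell out the line-construction argument and the function-value verifications more explicitly than the paper does, but there is no substantive difference in method.
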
 
\begin{proof}
	Consider $x^\ast\in  \sub f(\bar{x})$. Now, using the notation of   \cref{Lema51} and by the compactness of $\overline{T}$ we can assume that $t_{k,i} \to t_i \in \overline{T}$.  Moreover,  \cref{Teo2c} contradicts \cref{Lema51} \cref{Lema51Partb,Lema51Partbinf}, which means,  \cref{Lema51} \cref{Lema51Parta,Lema51Partainf} must hold. Hence we can write $x^\ast=\sum\limits_{i=1}^{n_1} \lambda_i x^\ast_i + \sum\limits_{i>n_1}^{n} x_i^\ast$.
	
	\begin{itemize}
		\item If  $i\leq n_1$ and   $t_i \in T$:  By assumption \cref{Teo2a} and \cref{Lema51} \cref{Lema51Parta} necessarily $f(\bar{x}) =f_{t_i}(\bar{x})$, i.e., $t\in T(\bar{x})$. Also,  \cref{Teo2b}  implies $x^\ast_i \in \sub f_{t_i}(\bar{x})$. 
		\item If $i\leq n_1$ and  $t_i \in \overline{T}\backslash T$: By   \cref{Lema51} \cref{Lema51Parta} we get  that $x_i^\ast\in \sub f_{t_i}(\bar{x})$.
		\item If $i > n_1$ and  $t_i \in T$: By assumption \cref{Teo2b}  we get $x^\ast_i \in \sub f_{t_i}(\bar{x})$. 
		\item If $i > n_1$ and    $t_i \in \overline{T}\backslash T$: By   \cref{Lema51} \cref{Lema51Parta} implies that $x_i^\ast\in \sub f_{t_i}(\bar{x})$.
		\end{itemize}
	This completes the first part. The case  $y^\ast\in \sub^\infty f(\bar{x}) $ follows similar arguments so we omit the proof.
\end{proof}
{
It is important to mention that similar results have been shown in the literature; we refer to \cite{MR3205549,MR2384963,MR1058436} for some examples. In the above result we did not go for the greater stage of generality, and we established the result only to show one possible application of \cref{Lema51}.} 
{\begin{remark}
	It has not escaped  our notice that the  convex envelope appears in \cref{teoremcompactindex} due to the fact that at the moment of taking the convergent subsequence in the index $t_{k,i} \to t_i$ we cannot ensure, in a general framework, that there could exist two limit points $t_i=t_j$ for $i\neq j$. Nevertheless, the reader can force  this condition imposing some assumptions over the index set, the simplest example is when the index set is finite.
\end{remark}}

{Now let us finish this subsection  with an example which shows an application of \cref{teoremcompactindex} for a countable number of functions.}
\begin{example}
	Consider $T=\N$ and the sequence of functions $$f_n(x,y)=\left\{ \begin{array}{cl}
	nx^2 + \frac{n}{n-1}\log(|y|+1) - \frac{1}{n}&\text{ if } x\geq 0,\\
	\frac{n}{n-1}\log(|y|+1) - \frac{1}{n} & \text{ if } x< 0.
	\end{array}\right.	$$
{Here,	it is worth noting that all  functions $f_n$ are locally Lipschitz continuous, but they are not uniformly Lipschitz continuous, so the results of \cite{MR3033113} cannot be applied. Nevertheless, we can apply \cref{teoremcompactindex}.} Indeed,  after some calculus, we get that
	\begin{align*}
		\sub f_n(0,0)&=\{0\}\times  [- \frac{n}{n-1}, \frac{n}{n-1} ],\\
		\sub^\infty f_n(0,0) &= \{(0,0) \}.
		\end{align*}
	
We compute  the function $$f(x,y)= \log(|y|+1) + \delta_{(-\infty, 0]}(x) = \left\{ \begin{array}{cl}
	+\infty  &\text{ if } x> 0,\\
	\log(|y|+1)  &\text{ if } x\leq 0,
	\end{array}\right.	.$$ Then, $\sub f(0,0)=[0,+\infty)\times [-1,1]$ and $\sub^\infty f(0,0)=[0,+\infty)\times \{(0,0)\}$. 
	 In order to apply  \cref{teoremcompactindex}  we notice that  $\N$ is a subset of the compact space $\N_\infty :=\N\cup\{ \infty \}$ with the metric $d(a,b)=| \frac{1}{a} - \frac{1}{b} |$. Straightforwardly the assumptions  \cref{Teo2a,Teo2b} of \cref{teoremcompactindex} are satisfied, furthermore,   $\N(0,0)=\emptyset$. 
	 
	 Now, we calculate $ \partial  f_\infty (0,0)$ and $ \partial^\infty  f_\infty(0,0)$. First we notice that
	 \begin{align*}
	 	\hat{ \partial} f_{n}(x,y) \subseteq [0,+\infty) \times [- \frac{n}{n-1}, \frac{n}{n-1} ].
	 \end{align*}
	 Then $  \partial  f_\infty (0,0) =[0,+\infty)\times [-1,1] $ and $  \partial^\infty  f_\infty (0,0) =[0,+\infty)\times \{ 0\} $. In particular,    assumption \cref{Teo2c} of  \cref{teoremcompactindex} holds. Then,  \cref{teoremcompactindex} gives us 
	 
	 \begin{align*}
		\partial  f (0,0) &=\co \big( \partial  f_\infty (0,0)   \big) +\co \big(\bigcup\limits_{n\in \N_\infty } \partial^\infty  f_{n} (0,0)   \big)=[0,+\infty)\times [-1,1],  \\
		\partial^\infty  f (0,0) &=\co \big(\bigcup\limits_{n\in \N_\infty } \partial^\infty  f_{n} (0,0)   \big)=[0,+\infty)\times \{ 0\},
	\end{align*}
	 which are  exact estimations of the limiting and singular subdifferential of the function $f$ at $(0,0)$.

\end{example}

\subsection{Infinite-dimensional spaces} \label{limitingSubinfinite}

In this section we study the limiting subdifferential of the supremum function in an arbitrary Asplund space  $X$.

The first  result of this Subsection generalizes the \emph{Fuzzy Intersection Rule for Fr\'echet Normals to Countable Intersections of Cones} established in \cite[Theorem 5.2]{MR3000580}.

\begin{theorem}\label{TEOCONES}
	Let $\{ \varLambda_t\}_{t\in T}$ be an arbitrary family of closed subsets of $X$ and $\varLambda:=\bigcap\limits_{t \in T }\varLambda_t$.   Then given $\bar{x}\in X$, $x^\ast \in \hat{N}(\varLambda,\bar{x})$,  $\epsilon>0$ and  $V\in \mathcal{N}_0(w^\ast)$ there are $F \in  \Pf(T)$, $w_t \in \mathbb{B}(\bar{x},\epsilon)$ and $w^\ast_t \in \hat{N}(\varLambda_t, w_t)$ such that  
	\begin{align}\label{NORMALCONES0}
	x^\ast \in \sum_{t \in F} w^\ast_t + V.
	\end{align}
	Consequently, if $\{ \varLambda_t\}_{t\in T}$ is a family of closed cones $\hat{N}(\varLambda_t, w_t) \subseteq N(\varLambda_t, 0)$ for all $t\in T$ and 
	\begin{align}\label{NORMALCONES}
	\hat{N}(\varLambda,\bar{x}) \subseteq \cl^{w^\ast}\bigg\{ \sum\limits_{t\in F} w^\ast_t	 \bigg\arrowvert   w^\ast_t \in N(\varLambda_t, 0) \text{ and } t\in F\in \Pf(T)		\bigg\}.
	\end{align}
\end{theorem}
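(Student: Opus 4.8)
The plan is to recognize the intersection $\varLambda=\bigcap_{t\in T}\varLambda_t$ at the level of indicator functions: since a point lies in $\varLambda$ exactly when it lies in every $\varLambda_t$, one has $\delta_\varLambda=\sup_{t\in T}\delta_{\varLambda_t}$, i.e. the supremum function \cref{supremumfunction} associated with the family $f_t:=\delta_{\varLambda_t}$ is precisely $\delta_\varLambda$. Recalling $\hat{N}(\varLambda,\bar x)=\hat{\sub}\delta_\varLambda(\bar x)$ and $\hat{\sub}\delta_{\varLambda_t}(x_t)=\hat{N}(\varLambda_t,x_t)$, the fuzzy estimate \cref{NORMALCONES0} will follow by specializing \cref{THEOREM:FORMULA:SUPREMUM} to this family and reading off the resulting finite sum. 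Note that $x^\ast\in\hat{N}(\varLambda,\bar x)$ forces $\bar x\in\varLambda$ (otherwise the subdifferential is empty), hence $\bar x\in\varLambda_t$ and $f_t(\bar x)=0$ for every $t\in T$.

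Second, I would simplify the geometric data produced by \cref{THEOREM:FORMULA:SUPREMUM} using the rigidity of indicators. Because $f_t(\bar x)=0$ for all $t$, every index is active, so $T_\epsilon(\bar x)=T$ and $\mathcal{T}_\epsilon(\bar x)=\Pf(T)$, and $F$ ranges over all finite subsets. For $\epsilon<1$ the membership $x'\in\mathbb{B}(\bar x,f_F,\epsilon)$ forces $f_F(x')=0$, hence $x'\in\bigcap_{s\in F}\varLambda_s$; likewise $x_t\in\mathbb{B}(x',f_t,\gamma)$ with $\gamma<1$ forces $x_t\in\varLambda_t$ and $\|x_t-\bar x\|\le\epsilon+\gamma$, while $\Delta(F,x',\gamma)$ reduces to $\lambda_t\ge0$ with $|\sum_t\lambda_t-1|\le\gamma$. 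Crucially, each $\hat{\sub}f_t(x_t)=\hat{N}(\varLambda_t,x_t)$ is a cone, so every product $\lambda_t n_t$ with $\lambda_t\ge0$ and $n_t\in\hat{N}(\varLambda_t,x_t)$ is again a Fr\'echet normal; thus the multipliers are absorbed and the inner sets consist of finite sums $\sum_{t\in F}w_t^\ast$ with $w_t^\ast\in\hat{N}(\varLambda_t,x_t)$. (In the infinite-dimensional setting the restriction $\#\lambda\le\dim(X)+1$ is vacuous.) It then remains to unwind the two nested $w^\ast$-closures: given target $\epsilon$ and $V\in\mathcal{N}_0(w^\ast)$, I take the outer parameter equal to $\epsilon/2$ and apply the definition of $w^\ast$-closure with the balanced neighborhood $\frac{1}{2}V$ to select $F,x'$ and a point $z^\ast$ of the inner intersection within $\frac{1}{2}V$ of $x^\ast$; choosing the inner parameter $\gamma<\epsilon/2$ and approximating $z^\ast$ again within $\frac{1}{2}V$ produces a genuine finite sum $\sum_{t\in F}w_t^\ast$ within $\frac{1}{2}V+\frac{1}{2}V\subseteq V$ of $x^\ast$, with $w_t:=x_t\in\mathbb{B}(\bar x,\epsilon)$. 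This is exactly \cref{NORMALCONES0}.

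Finally, for the cone consequence I would first establish the pointwise inclusion $\hat{N}(\varLambda_t,w_t)\subseteq N(\varLambda_t,0)$ whenever $\varLambda_t$ is a closed cone and $w_t\in\varLambda_t$. The mechanism is the dilation invariance of Fr\'echet normals, $\hat{N}(sA,sx)=\hat{N}(A,x)$ for $s>0$: applied to the cone $\varLambda_t$, for which $s\varLambda_t=\varLambda_t$, this gives $\hat{N}(\varLambda_t,w_t)=\hat{N}(\varLambda_t,sw_t)$ for all $s>0$, and letting $s\to0^+$ (so that $sw_t\to0$ along $\varLambda_t$) exhibits a fixed $w_t^\ast$ in this common cone as a $w^\ast$-limit of Fr\'echet normals at points tending to $0$, whence $w_t^\ast\in N(\varLambda_t,0)$. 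Combining this with \cref{NORMALCONES0}, every $w_t^\ast$ produced there lies in $N(\varLambda_t,0)$, so $x^\ast\in\big\{\sum_{t\in F}w_t^\ast:w_t^\ast\in N(\varLambda_t,0),\,F\in\Pf(T)\big\}+V$ for every $V\in\mathcal{N}_0(w^\ast)$; intersecting over $V$ yields \cref{NORMALCONES}. I expect the main obstacle to be purely bookkeeping: carefully matching the two layers of $w^\ast$-closures and the activity/ball tolerances in \cref{THEOREM:FORMULA:SUPREMUM} so that all approximation errors are absorbed into the single prescribed neighborhood $V$ and the single radius $\epsilon$, rather than any conceptual difficulty.
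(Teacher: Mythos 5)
Your proposal is correct and follows essentially the same route as the paper: the first part is obtained by specializing \cref{THEOREM:FORMULA:SUPREMUM} to the indicator functions $f_t=\delta_{\varLambda_t}$ (whose supremum is $\delta_\varLambda$) and absorbing the multipliers into the Fr\'echet normal cones, and the cone consequence uses the scaling invariance $\hat{N}(\varLambda_t,w_t)\subseteq\hat{N}(\varLambda_t,s w_t)$ for $s>0$ together with $s w_t\to 0$ to land in $N(\varLambda_t,0)$. Your write-up in fact supplies the bookkeeping (activity sets, ball tolerances, unwinding the two $w^\ast$-closures) that the paper compresses into the phrase ``straightforward application.''
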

\begin{proof}
	The first part corresponds to a straightforward application of  \cref{THEOREM:FORMULA:SUPREMUM}. Now if one considers a closed cone $K\subseteq X$ and $u \in K$ one has that 
	\begin{align*}
	\hat{N}(K, u)  \subseteq \hat{N}(K, n^{-1}u),\; \forall n\in \N.
	\end{align*}
	Therefore $\hat{N}(\varLambda_t, u)  \subseteq N(\varLambda_t, 0)$ for every $t\in T$ and $u \in \varLambda_t$, consequently \cref{NORMALCONES0} implies \cref{NORMALCONES}.
\end{proof}
\begin{remark}
	It important to notice that  the results of \cite{MR3561780} cannot be applied to derive the above formulae, since imposing uniform Lipschitz continuity of an indicator function of the set $\Lambda$ at a point $\bar{x}$ is equivalent to assume that the point $\bar{x}$ is an interior point of  $\Lambda$, which give us a trivial conclusion.
\end{remark}

The next definition is the notion of  \emph{sequential normal epi-compactness} (SNEC) of functions  defined for the limiting subdifferential (see, e.g., \cite[Definition 1.116 and Corollary  2.39]{MR2191744}). 
\begin{definition}
	A real extended valued function $f$ finite at $x$ is said  to be  SNEC at $x$  if for any sequences	 $(\lambda_k, x_k, x_k^\ast) \in [0,+\infty) \times X\times X^\ast$ satisfying $\lambda_k \to 0$, $x_k \overset{f}{\to} x$, $x^\ast_k\in \hat{\sub} f(x_k)$ and $\lambda_k x^\ast_k \overset{*}{\rightharpoonup} 0$ one has $\| \lambda_k x^\ast_k\| \to 0$. A family of functions $\{f_t\}_{t\in T}$ is said to be SNEC on a neighborhood of a point $\bar{x}$ if there exists  a neighborhood $U$ of $\bar{x}$ such that for all $x\in U$   all but one of these are SNEC at $x$.
\end{definition}

	We  say that the family of functions $\{f_t: t\in T\}$ satisfy the \emph{limiting condition} on a neighborhood of a point $\bar{x}$ if there exists  a neighborhood $U$ of $\bar{x}$ such that for all
all $x \in U$ and  $F\in \Pf(T)$ 
\begin{align}\label{LIMITING:CONDITION} 
	  w^\ast_t \in \sub^\infty f_t(x), \; t\in F \text{ and } \sum_{t\in F} w^\ast_t =0 \text{ implies  } w^\ast_t =0, \text{ for all } t \in F.
\end{align}
 
 {It is worth mentioning that the SNEC property is immediately satisfied if the space $X$ is finite-dimensional. Moreover,  the family of functions $\{f_t\}_{t\in T}$ is  SNEC and satisfies the \emph{limiting condition} on a neighborhood of a point $\bar{x}$, provided that the functions are locally Lipschitz (not necessarily uniform) on a neighborhood $U$ of $\bar{x}$.}

The next theorem corresponds to the main result of this paper; in this result we give an upper-estimation of the subdifferential of the supremum function only using the above definitions, without the assumption of uniformly locally Lipschitz continuity.

\begin{theorem}\label{Mordukhovich:Separable} 
	Consider  a family of lsc  functions $\{ f_t :   t\in T \}$. If  the family $\{f_t:t\in T\}$ is SNEC   and satisfy the limiting condition \cref{LIMITING:CONDITION}  on a neighborhood of  $\bar{x}$.
	Then 
	\begin{align}\label{Mordukhovich:Separable01}
		\partial f(\bar{x}) &\subseteq \bigcap\limits_{\epsilon >0} \cl^{w^\ast}  \bigg(     \mathcal{S}(\bar{x},\epsilon)\bigg), \text{ and }
		\partial^\infty f(\bar{x}) \subseteq \bigcap\limits_{\epsilon >0}  \cl^{w^\ast} \bigg( [0,\epsilon] \cdot   \mathcal{S}(\bar{x},\epsilon)\bigg).
	\end{align}
	Where 
	\begin{align}\label{defn:Sxe}
		\mathcal{S}(\bar{x},\epsilon):= \left\{  \sum\limits_{t \in F } \lambda_t \circ \sub f_t(x') : \begin{array}{c}
			F\in \Pf(T), x' \in \mathbb{B}(\bar{x},\epsilon),  \\  |f_F(x')- f(\bar{x})| \leq \epsilon, \; \lambda\in \Delta(F) \\ \text{and } 
			f_{t}(x')=f_F (x') \text{ for all } t' \in \supp \lambda
		\end{array} \right\},
		\end{align}
	
	and \begin{align*}
		 	\lambda \circ \partial f_t(x) :=\left\{ \begin{array}{cl}
			\lambda \partial f_{t}(x), &\text{ if } \lambda >0,\\
			\partial^\infty f_{t}(x),&\text{ if } \lambda =0.
		\end{array}\right.
	\end{align*}
\end{theorem}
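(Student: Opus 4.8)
The plan is to lift the fuzzy, Fr\'echet-level estimate of \cref{THEOREM:FORMULA:SUPREMUM} to the limiting level by passing to weak$^\ast$ limits, using the SNEC property to forbid escape to infinity and the limiting condition \cref{LIMITING:CONDITION} to kill degenerate singular combinations. First I would reduce from the limiting to the regular subdifferential: given $x^\ast\in\partial f(\bar x)$ there are $x_n\overset{f}{\to}\bar x$ and $x_n^\ast\in\hat\partial f(x_n)$ with $x_n^\ast\overset{w^\ast}{\to}x^\ast$. Fixing $\epsilon>0$ and taking $n$ large enough that $x_n\in\mathbb{B}(\bar x,\epsilon/2)$ and $|f(x_n)-f(\bar x)|\le\epsilon/2$, it suffices to show each such $x_n^\ast$ lies in $\cl^{w^\ast}\mathcal{S}(\bar x,\epsilon)$; since that set is weak$^\ast$ closed and $x_n^\ast\overset{w^\ast}{\to}x^\ast$, we obtain $x^\ast\in\cl^{w^\ast}\mathcal{S}(\bar x,\epsilon)$, and intersecting over $\epsilon>0$ yields the first inclusion. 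The singular case is parallel: for $y^\ast\in\partial^\infty f(\bar x)$ one takes $\mu_n\to 0^+$ with $\mu_n y_n^\ast\overset{w^\ast}{\to}y^\ast$, scales the representation below by $\mu_n$, and lands in $[0,\epsilon]\cdot\mathcal{S}(\bar x,\epsilon)$.

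So I reduce to a pointwise statement at the base point $u:=x_n$. Applying \cref{THEOREM:FORMULA:SUPREMUM} at $u$ with a small $\delta>0$, $u^\ast$ lies in the weak$^\ast$ closure of the union over $F\in\mathcal{T}_\delta(u)$ and $x'\in\mathbb{B}(u,f_F,\delta)$ of the inner sets $A(F,x',\delta):=\bigcap_{\gamma>0}\cl^{w^\ast}\{\sum_{t\in F}\lambda_t\hat\partial f_t(x_t):x_t\in\mathbb{B}(x',f_t,\gamma),\ \lambda\in\Delta(F,x',\gamma)\}$. The heart of the proof is the claim that, for each fixed finite $F$ and each $x'$ close to $\bar x$, one has $A(F,x',\delta)\subseteq\mathcal{S}(\bar x,\epsilon)$ once $\delta$ is small relative to $\epsilon$. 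To make the passage to limiting subgradients legitimate I would invoke separable reduction (following the machinery of \cite{MR2191744}) to assume $X$ separable; then $X^\ast$ is norm separable, the fuzzy estimates are realized by sequences rather than nets, and bounded subsets of $X^\ast$ are weak$^\ast$ sequentially compact. Fixing $z^\ast\in A(F,x',\delta)$ and $\gamma_k\downarrow 0$, I extract $x_{t,k}\overset{f_t}{\to}x'$, $\lambda_{\cdot,k}\in\Delta(F,x',\gamma_k)$ and $x_{t,k}^\ast\in\hat\partial f_t(x_{t,k})$ with $\sum_{t\in F}\lambda_{t,k}x_{t,k}^\ast\overset{w^\ast}{\to}z^\ast$; since $F$ is finite I may assume $\lambda_{\cdot,k}\to\lambda\in\Delta(F)$.

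Now I split on the boundedness of $s_k:=\max_{t\in F}\|\lambda_{t,k}x_{t,k}^\ast\|$. If $(s_k)$ is bounded, then along a subsequence $\lambda_{t,k}x_{t,k}^\ast\overset{w^\ast}{\to}z_t^\ast$ for each $t$ and $z^\ast=\sum_{t\in F}z_t^\ast$. When $\lambda_t>0$ I divide to get $x_{t,k}^\ast\overset{w^\ast}{\to}z_t^\ast/\lambda_t\in\partial f_t(x')$, whence $z_t^\ast\in\lambda_t\circ\partial f_t(x')$; moreover the constraint $\lambda_{t,k}\le\gamma_k$ off the $\gamma_k$-active set forces $f_t(x')=f_F(x')$, i.e.\ the exact activity of \cref{defn:Sxe}. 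When $\lambda_t=0$ the product $\lambda_{t,k}x_{t,k}^\ast$ converges to $z_t^\ast\in\partial^\infty f_t(x')=0\circ\partial f_t(x')$. Collecting the terms gives $z^\ast\in\mathcal{S}(\bar x,\epsilon)$. To exclude $s_k\to\infty$, set $\eta_k:=1/s_k\to 0^+$; passing to a subsequence $\eta_k\lambda_{t,k}x_{t,k}^\ast\overset{w^\ast}{\to}w_t^\ast$, and since $\sum_{t\in F}\lambda_{t,k}x_{t,k}^\ast$ is weak$^\ast$ convergent, hence norm bounded, $\sum_{t\in F}w_t^\ast=0$ with each $w_t^\ast\in\partial^\infty f_t(x')$. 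The limiting condition \cref{LIMITING:CONDITION} forces $w_t^\ast=0$ for all $t$. For every SNEC index the SNEC property upgrades $\eta_k\lambda_{t,k}x_{t,k}^\ast\overset{w^\ast}{\to}0$ to $\|\eta_k\lambda_{t,k}x_{t,k}^\ast\|\to 0$, and for the at most one exceptional index $t_0$ the same norm convergence follows from $\eta_k\lambda_{t_0,k}x_{t_0,k}^\ast=\eta_k\sum_{t\in F}\lambda_{t,k}x_{t,k}^\ast-\sum_{t\neq t_0}\eta_k\lambda_{t,k}x_{t,k}^\ast$, both summands tending to $0$ in norm. Hence $\max_{t\in F}\|\eta_k\lambda_{t,k}x_{t,k}^\ast\|\to 0$, contradicting $\eta_k s_k=1$; so $(s_k)$ is bounded.

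The main obstacle I anticipate is the separable reduction itself: it is what bridges the \emph{net}-defined weak$^\ast$ closures produced by the fuzzy calculus and the \emph{sequentially} defined limiting and singular subdifferentials, and carrying it out for an arbitrary index family $T$ while preserving the SNEC and limiting hypotheses under the reduction is delicate. Once separability is in force, everything else is routine: extracting weak$^\ast$ convergent subsequences, identifying regular limits as limiting subgradients and vanishing-weight limits as singular ones, reading off the exact activity and the simplex constraint of \cref{defn:Sxe}, and assembling the outer union and weak$^\ast$ closure. The singular formula for $\partial^\infty f(\bar x)$ is obtained by the identical analysis applied to $\mu_n x_n^\ast$, the extra factor $\mu_n\to 0^+$ producing the scaling $[0,\epsilon]\cdot\mathcal{S}(\bar x,\epsilon)$.
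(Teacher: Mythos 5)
Your overall strategy coincides with the paper's: reduce to a Fr\'echet subgradient at a nearby point, feed it into \cref{THEOREM:FORMULA:SUPREMUM}, pass to limits along a vanishing sequence $\gamma_k\downarrow 0$, split on boundedness of $\max_t\|\lambda_{t,k}x_{t,k}^\ast\|_\ast$, read off exact activity from the constraint $\lambda_{t,k}\le\gamma_k$ off the $\gamma_k$-active set, and rule out the unbounded case with the limiting condition \cref{LIMITING:CONDITION} plus SNEC (including the ``all but one'' bookkeeping for the exceptional index, which you handle exactly as the paper does). The bounded/unbounded dichotomy, the identification of $\lambda_t>0$ limits as limiting subgradients and $\lambda_t=0$ limits as singular ones, and the scaling by $\mu_n$ for the singular formula are all correct and match the paper.

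The genuine gap is the step you yourself flag and then defer: extracting a \emph{sequence} $\sum_{t\in F}\lambda_{t,k}x_{t,k}^\ast\overset{w^\ast}{\to}z^\ast$ from the net-defined weak$^\ast$ closures of \cref{THEOREM:FORMULA:SUPREMUM}. Your proposed fix --- ``invoke separable reduction to assume $X$ separable'' --- does not work as stated: the theorem is asserted in an arbitrary Asplund space, and restricting $f$ and the $f_t$'s to a separable subspace changes the Fr\'echet and limiting subdifferentials (only one inclusion survives), so the general case does not follow from the separable one by restriction. The paper does something more specific: it never leaves the ambient space, but builds (via \cite[Theorem 13]{MR3582299}) a \emph{rich family} $\mathcal{A}$ of pairs $V\times Y$ of separable subspaces of $X\times X^\ast$ equipped with projections $P_\mu$ such that any weak$^\ast$ limit of a sequence in $Y$ that vanishes on $V$ vanishes on all of $X$; it then runs an iterative diagonal construction, enlarging $V_n\times Y_n$ at each stage to absorb all previously produced points $x_t(i,p)$ and subgradients $x_t^\ast(i,p)$, and controls the residual terms $v^\ast(n,p)$ through shrinking neighborhoods $W(n,p)$ built from dense sets in $\mathbb{B}\cap V_n$. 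This is what turns the net-closure of \cref{THEOREM:FORMULA:SUPREMUM} into an exact sequential representation $w^\ast=\sum_t\lambda_t(n)x_t^\ast(n)+v^\ast(n)$ with $v^\ast(n)\overset{w^\ast}{\to}0$, after which your limit analysis applies verbatim. Without this (or an equivalent) construction, the passage from nets to sequences --- and hence the appearance of the sequentially defined $\partial f_t$ and $\partial^\infty f_t$ in $\mathcal{S}(\bar x,\epsilon)$ --- is unjustified in a nonseparable Asplund space.
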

\begin{proof}
	Consider $\epsilon >0$ and $V \in \mathcal{N}_0(w^\ast)$. Pick $x^\ast\in \sub f(\bar{x})$ ($y^*\in \sub^\infty f(\bar{x} )$, resp.). Hence,  there exist  sequences $x_j \overset{f}{\to }\bar{x}$ and $x^\ast_j\overset{w^\ast}{\rightarrow} x^\ast$ ($\nu_j \to 0^+$ and $\nu_j x^\ast_j\overset{w^\ast}{\rightarrow} y^\ast$, resp.) with $x_j^\ast \in \hat{\sub} f(x_j)$.   Now,  take $j_0 \in \N$ such that $x^\ast \in  x^\ast_{j_0} + V$ ($x^\ast \in  \nu_{j_0}x^\ast_{j_0} + V$ and $\nu_{j_0} \leq \epsilon$, resp.) and $x_{j_0} \in \mathbb{B}(\bar{x},f,\epsilon)$. Hence,  by  \cref{THEOREM:FORMULA:SUPREMUM} there exist some $F\in \mathcal{T}_{\epsilon}(x_{j_0})$ and $x' \in \mathbb{B}(x_{j_0},f_F,\epsilon)$ such that  $x^\ast_{j_0}=w^\ast + v^\ast$ with 
	\begin{align*}
		w^\ast\in \bigcap\limits_{\gamma>0} \cl^{w^\ast} \{  \sum\limits_{t \in T } \lambda_t \sub f_t(x_t) : x_t \in \mathbb{B}(x', f_t,\gamma), \; (\lambda_t) \in \Delta (F,x',\gamma)  \}, 
	\end{align*} 
	and $v^\ast \in V$.  One gets   $x' \in \mathbb{B}(\bar{x},2\epsilon)$ and $|  f_F (x') - f(\bar{x})| \leq 3\epsilon$.  Now, we show that
	\begin{align}\label{eq:inclusion}
		w^\ast \in \mathcal{S}(\bar{x},3\epsilon)
	\end{align}

	For this purpose let us  introduce the following notation; by the symbol $S(X\times X^\ast)$ we understand the family of set $U\times Y$ where $U$ and $Y$ are (norm-) separable closed linear  subspaces of $X$ and $X^\ast$, a set $\mathcal{A} \subseteq S(X\times X^\ast)$ is called a \emph{rich family} if $(i)$ for every $U\times Y\in S(X\times X^\ast)$, there exists $V\times Z \in \mathcal{A}$ such that $U \subseteq V$ and $Y \subseteq Z$, and $(ii)$ $\overline{ \bigcup_{n\in \N } U_n}\times \overline{ \bigcup_{n\in \N } Y_n} \in \mathcal{A}$, whenever the sequence $(U_n\times Y_n)_{n\in  \N} \subseteq \mathcal{A}$ satisfies  $U_n \subseteq U_{n+1}$ and $Y_n \subseteq Y_{n+1}$ (see, e.g., \cite{MR3582299,MR3533170} and the references therein). We claim that under our assumptions there exists a rich family $\mathcal{A}$ such that for all $V \times Y$ and any sequence $y_n^\ast \in Y$ with $y_n^\ast \overset{w^\ast}{ \rightarrow} v^\ast$ and $v^\ast$ is zero on $V$, then $v^\ast$ is zero in the whole $X$.  Indeed, by \cite[Theorem 13]{MR3582299} there exists a rich family $\mathcal{A}\subseteq S(X\times X^\ast)$ such that for every 
	$ \mu:=V\times Y \in \mathcal{A}$ there exists a projection $P_\mu : X^\ast \to X^\ast$ satisfying that  $P_\mu(X^\ast)=Y$,  $P_\mu^{-1}(0)=V^{\perp}$ and $P^\ast_{\mu}(X^{\ast \ast})=\overline{V}^{w(X^{\ast \ast},X^\ast)}$.  Hence, consider $v^\ast_k \in Y$  such that $v^\ast_{k} \overset{w^\ast}{\to} v^\ast$ and $v^\ast = 0$ on $V$, so  $v^\ast = 0$ on $\overline{V}^{w(X^{\ast \ast},X^\ast)}$.  Moreover, because $v^\ast_k  \in Y$ and $P_\mu$ is a projection onto $Y$ one has $P_\mu(v_k^\ast)=v_k^\ast$, then $ \langle v^\ast , x - P_\mu(x)  \rangle =\lim \langle v_k^\ast , x- P_\mu^\ast(x) \rangle = \lim \langle P_\mu(v_k^\ast) , x- P_\mu^\ast(x) \rangle =\lim \langle  v_k^\ast , P_\mu^\ast(x)- P_\mu^\ast(x) \rangle =0$ for every $x \in X$, which implies (using that $ \langle v^\ast , P^\ast_\mu(x)  \rangle=0$) $ \langle v^\ast , x \rangle=0$.

	Now, we   choose  a  decreasing sequence of  positive numbers $\gamma_n\searrow 0^+$, consider $V_1 \times Y_1\in \mathcal{A}$  containing $(x', w^\ast)$, let $\{e(1,i)  \}_{i\in \N}$ be a dense set in $\mathbb{B}\cap V_1$ and define  
	$$W(1,p):=\{  y^\ast \in X^\ast : |\langle y^\ast ,  e(1,i) \rangle| \leq  \gamma_p, \text{ for all } i=1,...,p  \}.$$
	 Whence for all $p\ge 1$ and $t\in F$ we can pick  points $x_t(1,p) \in \mathbb{B}(x',f_t, \gamma_p)$, subgradients  $x^\ast_{t}(1,p) \in \hat{\sub} f_t(x_t(1,p))$, $\lambda (1,p) \in \Delta(F,x',\gamma_p)$ and $v(1,p)^\ast \in W(1,p)$ such that $w^\ast=\sum \lambda_{t}(1,p) x^\ast_{t}(1,p) + v^\ast(1,p)$. 
	
	Now assume that we have selected $V_n\times Y_n\in \mathcal{A}$ containing all $V_k\times Y_k$ for $k\leq n$, families of points $\{ e(n,i)  \}_{i\in \N}$  dense  in $\mathbb{B}\cap V_n$, which contains all  previous $\{ e(k,i)\}_{i \in \N}$ for $k \leq n$,  points $x_t(i,p) \in \mathbb{B}(x',f_t, \gamma_p)$, subgradients  $x^\ast_{t}(i,p) \in \hat{\sub} f_t(x_t(i,p))$, $\lambda(i,p) \in \Delta(F,x',\gamma_p)$ and $v(i,p)^\ast \in W(i,p)$ such that 
	\begin{align}\label{representationw}
w^\ast=\sum \lambda_{t}(i,p) x^\ast_{t}(i,p) + v^\ast(i,p), \text{ for } i\leq n \text{ and } p \geq 1.
		\end{align}
  Then, take $V_{n+1}\times Y_{n+1}\in \mathcal{A}$ such that $V_n \times Y_n \subseteq V_{n+1}\times Y_{n+1}$, $x_t(i,p) \in V_{n+1}$, $x^\ast_t(i,p) \in Y_{n+1}$ for all $t\in F$, $i \leq n$, $p \in \N$,  consider $\{ e(n+1,i) \}_{i\in \N}$ a dense set in $B\cap V_{n+1}$, and define 
  $$W(n+1,p):=\{  y^\ast \in X^\ast :| \langle y^\ast ,  e(k,i) \rangle| \leq  \gamma_p,\text{ for all } k=1,...,n+1 \text{ and } i=1,...,p\}.$$ Then for all $p\ge 1$ and $t\in F$ we can pick  points $x_t(n+1,p) \in \mathbb{B}(x',f_t, \gamma_p)$, subgradients  $x^\ast_{t}(n+1,p) \in \hat{\sub} f_t(x_t(n+1,p))$, $\lambda (n+1,p) \in \Delta(F,x',\gamma_p)$ and $v(n+1,p)^\ast \in W(n+1,p)$ such that $w^\ast=\sum \lambda_{t}(n+1,p) x^\ast_{t}(n+1,p) + v^\ast(n+1,p)$. 
	
	Now we  define  $\overline{ \bigcup_{n\in \N } V_n} \times\overline{ \bigcup_{n\in \N } Y_n} =:V\times Y \in \mathcal{A}$, $x_{t}(n):=x_{t}(n,n)$, $x^\ast_{t}(n):=x^\ast_{t}(n,n)$, $\lambda_{t}(n):=\lambda_{t}(n,n)$, $v^\ast(n):=v^\ast(n,n)$. Then, by our construction $ x_{t}(n) \overset{f}{\to} x'$. Since $\lambda(n) \in \Delta(F,x',\gamma_n)$  we can assume that $\lambda_t(n) \overset{n\to \infty}{\to} \lambda_t \in [0,1]$ for every $t\in F$, and  $\sum_{t\in F} \lambda_t =1$; moreover  $ f_t(x')=f_F (x')$  for   every $t \in \supp \lambda$.
	
	Then, on the one hand if (there exist some subsequence such that)   $\lambda_{t}(n) x^\ast_{t}(n) $ is bounded  for all $t\in F$, in this case  we can assume that 
	\begin{itemize}
		\item  If  $t\in \supp \lambda$, $\lambda_{t}(n) x^\ast_{t}(n)$ converge to some $\lambda_t x^\ast_t$ with   $x_t^\ast\in \sub f_t(x')$.
		\item  If  $t\notin \supp \lambda$, $\lambda_{t}(n) x^\ast_{t}(n)$ converge to some $ x^\ast_t\in \sub^\infty f_t(x')$.
		\item $v^\ast ({k} )\overset{w^\ast}{\to} v^\ast$.
	\end{itemize}
	 Furthermore,  $v^\ast$ is zero on $V$. Indeed, the set $\{ e(i,j)\}_{i,j}$ is dense in $V$, then  for every  $n\geq \max\{i,j\}$  we have that $| \langle v^\ast(n), e(i,j)| \leq \gamma_n$ (recall $v^\ast(n) \in W(n,n)$), so taking the limits $\langle v^\ast, e(i,j)\rangle=0$ for every $i,j$,  therefore $v^\ast$ is zero on $V$. Thus, by the property of $\mathcal{A}$ necessarily $v^\ast$ is zero on the whole $X$, hence using \cref{representationw}  we have that  \cref{eq:inclusion} holds.
	 
	  On the other hand, if there exists some $t\in F$ such that $\| \lambda_t(n) \cdot x_t^\ast (n)\|_{\ast} \to +\infty$, we  define $\eta_n:=(\max_{t\in F}\{ \| \lambda_{t}(n)x^\ast_{t}(n)\|_{\ast}, \| v^\ast (n)\|_{\ast} \})^{-1}$. We have $\eta_k w^\ast \to 0$ and (by passing to a subsequence) $\eta_n \lambda_{t}(n) x^\ast_{t}(n)\ \overset{w^\ast }{\to} w_t^\ast \in \sub^{\infty} f(x')$; and by a similar argument as in the first case $\eta_n v^\ast (n) \to 0$, so $\sum_{t\in F} w^\ast_t=0$. Moreover, by the limiting condition \cref{LIMITING:CONDITION} we have $w_{t}^\ast=0$. Finally, since all the functions but one of  $f_t$'s are SNEC at  $x'$ we have $\eta_n \lambda_{t}(n) x^\ast_{t}(n)$ converge in norm topology to zero, which is a contradiction. 
	
	Therefore $x^\ast \in \mathcal{S}(\bar{x},3\epsilon) + V + V$ ($x^\ast \in [0,\epsilon] \mathcal{S}(\bar{x},3\epsilon) + V + V$, resp.), and by the arbitrariness of $V$ and $\epsilon>0$ we conclude \cref{Mordukhovich:Separable01}.
\end{proof}

The next result gives us a simplification of the main formulae in \cref{Mordukhovich:Separable} under the additional assumption that  the data is  Lipschitz continuous. The case when the data is uniformly  Lipschitz continuous was proved in \cite[Theorem 3.2]{MR3033113}.

\begin{theorem}\label{TEO:MORD:NGH}
	Let  $\{ f_t :   t\in T \}$  be  a  family of  locally Lipschitz functions on a neighborhood of a point $\bar{x} \in \dom f$. 
	Then 
		\begin{align}\label{TEO:MORD:NGHi} 
	\partial f(\bar{x}) &\subseteq \bigcap\limits_{\epsilon >0} \cl^{w^\ast}  \bigg(     \mathcal{S}(\bar{x},\epsilon)\bigg), \text{ and }
	\partial^\infty f(\bar{x}) \subseteq \bigcap\limits_{\epsilon >0}  \cl^{w^\ast} \bigg( [0,\epsilon] \cdot   \mathcal{S}(\bar{x},\epsilon)\bigg),
	\end{align}
	where $\mathcal{S}(\bar{x},\epsilon)$ was defined in \cref{defn:Sxe}. In addition, if the family is uniformly locally Lipschitz at $\bar{x}$, then
	\begin{align}\label{TEO:MORD:NGHii} 
	\sub f(\bar{x}) \subseteq  \bigcap\limits_{\epsilon >0} \cl^{w^\ast}\left\{  \sum\limits_{t \in F } \lambda_t  \sub f_t(x') : \begin{array}{c}
	F\in \Pf(T_\epsilon(\bar{x})), x' \in \mathbb{B}(\bar{x},\epsilon), \\ \lambda\in \Delta(F) \text{ and } \\
	f_{t}(x')= f_F (x') \text{ for all } t\in F
	\end{array}  \right\}.
		\end{align}

\end{theorem}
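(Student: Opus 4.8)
The plan is to obtain the first pair of inclusions \cref{TEO:MORD:NGHi} as an immediate consequence of \cref{Mordukhovich:Separable}, after checking that local Lipschitz continuity forces both hypotheses of that theorem, and then to peel off the sharper formula \cref{TEO:MORD:NGHii} from \cref{TEO:MORD:NGHi} by a Lipschitz bookkeeping argument that prunes the null-weight indices and identifies the surviving ones as active.

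For the first assertion I would verify that a family of (not necessarily uniformly) locally Lipschitz functions is SNEC and satisfies the limiting condition \cref{LIMITING:CONDITION} on a neighborhood of $\bar{x}$. For SNEC, fix $t$ and a neighborhood on which $f_t$ is Lipschitz with some constant $K_t$; then every Fr\'echet subgradient $x_k^\ast \in \hat{\sub} f_t(x_k)$ satisfies $\| x_k^\ast\|_\ast \leq K_t$, so whenever $\lambda_k \to 0$ one has $\| \lambda_k x_k^\ast\|_\ast \leq \lambda_k K_t \to 0$; hence each member of the family is SNEC (which is more than the ``all but one'' required). For the limiting condition I would invoke the standard fact that a locally Lipschitz function has trivial singular subdifferential, $\sub^\infty f_t(x)=\{0\}$ throughout the neighborhood, so that in \cref{LIMITING:CONDITION} every admissible $w_t^\ast \in \sub^\infty f_t(x)$ is already zero and the implication holds vacuously. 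With both hypotheses in force, \cref{Mordukhovich:Separable} yields \cref{TEO:MORD:NGHi}.

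Assuming now the uniform local Lipschitz property with constant $K$, I would simplify the set $\mathcal{S}(\bar{x},\epsilon)$ of \cref{defn:Sxe}. Take a generating term $\sum_{t\in F}\lambda_t \circ \sub f_t(x')$ with $F\in \Pf(T)$, $x'\in \mathbb{B}(\bar{x},\epsilon)$, $|f_F(x') - f(\bar{x})|\leq \epsilon$, $\lambda \in \Delta(F)$ and $f_t(x') = f_F(x')$ for $t\in \supp\lambda$. Since $\sub^\infty f_t(x') = \{0\}$, each index with $\lambda_t=0$ contributes only $\{0\}$; I may therefore discard those indices and replace $F$ by $F':=\supp\lambda$, on which all multipliers are positive and $\lambda\circ\sub f_t(x')=\lambda_t\sub f_t(x')$. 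As $F'\subseteq F$ consists of maximizers one has $f_{F'}(x')=f_F(x')$, so the equality $f_t(x')=f_{F'}(x')$ now holds for \emph{all} $t\in F'$, matching the constraint in \cref{TEO:MORD:NGHii}.

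It remains to certify that the retained indices are active. For $t\in F'$ the uniform Lipschitz estimate gives
\[ f_t(\bar{x}) \geq f_t(x') - K\epsilon = f_F(x') - K\epsilon \geq f(\bar{x}) - (1+K)\epsilon, \]
so that $F' \in \Pf(T_{(1+K)\epsilon}(\bar{x}))$. Consequently every generating term of $\mathcal{S}(\bar{x},\epsilon)$ lies in the set on the right-hand side of \cref{TEO:MORD:NGHii} with $\epsilon$ replaced by $(1+K)\epsilon$ (note that the ball radius $\epsilon$ is absorbed since $\mathbb{B}(\bar{x},\epsilon)\subseteq\mathbb{B}(\bar{x},(1+K)\epsilon)$); passing to the $w^\ast$-closure, intersecting over all $\epsilon>0$ and reparametrizing by $\epsilon'=(1+K)\epsilon$ then delivers \cref{TEO:MORD:NGHii}. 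I expect the principal difficulty to be exactly this last bookkeeping: justifying the elimination of the null-weight terms through $\sub^\infty f_t=\{0\}$ and tracking the constant $(1+K)$ so that the ball radius and the activity radius can be absorbed into a single parameter when passing to the intersection.
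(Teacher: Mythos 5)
Your argument is correct, and for the second inclusion \cref{TEO:MORD:NGHii} it coincides with the paper's: the whole content there is the activity estimate $f_t(\bar x)\geq f_t(x')-K\epsilon\geq f(\bar x)-(K+1)\epsilon$ for the surviving indices, and you additionally spell out the pruning of the null-weight indices via $\sub^\infty f_t(x')=\{0\}$ and the replacement of $F$ by $\supp\lambda$, which the paper leaves implicit. For the first pair of inclusions \cref{TEO:MORD:NGHi}, however, your route differs from the paper's written proof. You apply \cref{Mordukhovich:Separable} directly in $X$, after verifying that each locally Lipschitz $f_t$ is SNEC at every point of the common neighborhood (Fr\'echet subgradients are bounded by the local Lipschitz constant, so $\lambda_k x_k^\ast\to 0$ in norm) and that $\sub^\infty f_t\equiv\{0\}$ there, making the limiting condition \cref{LIMITING:CONDITION} vacuous; this is legitimate, and is in fact exactly the observation the paper itself records in the remark immediately preceding \cref{Mordukhovich:Separable}. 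The paper's proof instead takes a finite-dimensional detour: it fixes a finite-dimensional subspace $L$ with $L^\perp\subseteq V$, restricts the family to $L$ (where SNEC is automatic), applies \cref{Mordukhovich:Separable} to $(f_t)_{|_L}$, and then lifts the conclusion back through the adjoint of a continuous projection using the Lipschitz sum rule $\sub(f_t+\delta_L)(x')\subseteq\sub f_t(x')+L^\perp$, absorbing $L^\perp$ into $V$. Your direct argument is shorter and loses nothing in generality, since the hypotheses of \cref{Mordukhovich:Separable} are genuinely satisfied by a (non-uniformly) locally Lipschitz family; the paper's projection argument buys only an independent derivation that bypasses the explicit SNEC verification in the infinite-dimensional space.
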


	\begin{proof}
	Consider $V\in \mathcal{N}_{0}(w^*)$, $\epsilon >0$,   a finite-dimensional subspace $L\ni \bar{x}$ such that $L^\perp\subseteq V$ and $x^\ast \in \sub f(x)$ (respectively, $y^\ast \in \sub^{\infty} f(\bar{x})$ ), let $P:X \to L$ be a continuous linear projection and define $W=(P^\ast)^{-1}(V)$. Hence,  $x^\ast_{|_L} \in \sub f_{|_L}(x)$ (respectively,  $y^\ast_{|_L} \in \sub^\infty f_{|_L}(x)$).  Hence, we apply   \cref{Mordukhovich:Separable} and we conclude the existence of some  $F\in \mathcal{T}_{\epsilon}(\bar{x})$, $x' \in \mathbb{B}(\bar{x},\epsilon)$, $\lambda\in \Delta(F)$  such that $x^\ast_{|_L} \in \sum\limits_{t \in F } \lambda_t  \sub (f_{|_L})_t(x')  +W$ and $(f_{|_L})_{t'}(x')=(f_{|_L})_{t''}(x') \text{ for all } t',t^{''}\in  \supp \lambda$, then $$P^\ast (x^\ast_{|_L})\in  \sum\limits_{t \in F } \lambda_t  \sub (f_t+\delta_{L})(x')  +V=\sum\limits_{t \in F } \lambda_t  \sub f_t(x') + L^\perp  +V,$$ 
	where the last equality follows from the sum rule for Lipschitz functions (see \cite{MR2191744,MR859504,MR1014198}). Therefore $    x^\ast = P(x^\ast_{L}) + x^\ast - P(x^\ast_{L}) \in   \sum\limits_{t \in F } \lambda_t  \sub f_t(x') + V$, which implies $x^*\in \mathcal{S}(\bar{x},\epsilon) + V$. Similarly, for $y^\ast_{|_L} \in \sub^\infty f_{|_L}(x)$ one concludes that $y^\ast \in [0,\epsilon] \cdot   \mathcal{S}(\bar{x},\epsilon) + V$, and from the arbitrariness of $\epsilon>0$ and $V \in \mathcal{N}_0(w^\ast)$ we   conclude the proof of \cref{TEO:MORD:NGHi}.
	
	Finally to prove \cref{TEO:MORD:NGHii} we notice that if the functions are uniformly locally Lipschitz at $\bar{x}$ with constant $K$, then assuming that $\epsilon >0$ is small enough, we have that  for any $t\in T$, $x \in \mathbb{B}(\bar{x},\epsilon)$ and  $ | f_t(x)- f(\bar{x})| \leq \epsilon$  we also have $f_t(\bar{x}) \geq f(\bar{x})-(K+1) \epsilon$, which means $t\in T_{(K+1) \epsilon}(\bar{x})$.
\end{proof}
The next example shows an application of the above results with a family which is not uniformly locally Lipschitz. This example is important because, on the one hand  it provides an exact upper-estimation of the supremum function of a family of functions which are not uniformly locally Lipschitz, and, on the other hand it gives us a nonconvex upper-estimation.
\begin{example}\label{Example:Nonconvex}
	Consider $T=(0,1)$ and the family of functions $f_t:\mathbb{R}^2 \to \R $ given by 
	$$f_t(x,y)=tx^2 - \frac{|y| +1}{t}.$$ 
Here, it is important to notice that all the functions are Lipschitz continuous, but not uniformly Lipschitz continuous, so the results of \cite{MR3033113} cannot be applied. Nevertheless, we can apply  \cref{TEO:MORD:NGH}. Indeed, first the supremum function is given by  $f(x,y)=x^2 - |y| - 1$.  The limiting subdifferential of $f$ at $(\bar{x},\bar{y})=(0,0)$ is $\sub f(0,0)=\{ 0\} \times \{-1,1\}$ and the value of $f$ at this point is $f(0,0)=-1$. Now, we compute the limiting subdifferential of $f$ at $(\bar{x}, \bar{y})$ using \cref{TEO:MORD:NGH}. Pick $z^\ast$ in the right-hand side of  \cref{TEO:MORD:NGHi}, then there exist $\epsilon_n\to 0^+$, $F_n \in  \Pf(T)$,  $(x_{n},y_{n}) \in \epsilon_n\mathbb{B}, $ and $ \lambda_n \in \Delta(F_n)$  such that  $ | f_{t_n}(x_n,y_n)- f(0,0)| \leq \epsilon_n$,	$f_{t}(x_n,y_n)= f_{F_n} (x_n,y_n)$ for all $ t\in F_n$ and  $z^\ast_n \in \sum_{s\in F_n} \lambda_{s}  \sub f_{s} (x_n,y_n) +\epsilon_n \mathbb{B}^\ast$. Now the equation $$ t x_n^2 - \frac{|y_n| + 1}{t} = s x_n^2 - \frac{|y_n|+1}{s}$$ implies $t=s$, and consequently $F_n=\{t_n\}$. 
	
	Now,using the inequality 	$| f_{t_n}(x_n,y_n)- f(0,0)|=|f_{t_n}(x_n,y_n) +1|\leq \epsilon_n$ one gets $t_n \to 1$. Therefore, 	$z^\ast_n\in \{ (2t_n x_n^2, \frac{1}{t_n}), (2 t_n x_n^2 , -\frac{1}{t_n}) \}+\epsilon_n \mathbb{B}^\ast$ with $t_n \to 1$, $x_n \to 0$ and $\epsilon \to 0$,  consequently $z^\ast \in \{ 0\} \times \{-1,1\}$.
\end{example}

{In order to derive a more  precise estimation of the subdifferential of the supremum function in \cite[Definition 3.4]{MR3033113}, the authors introduced the definition  of \emph{equicontinuous
		subdifferentiablitity}. This notion involves some \emph{uniform
		continuity}  of the subdifferentials of the data functions $f_t$'s for
	points close to the active index set. }

\begin{definition}
	\label{def:equicontinuity:2} Let $f_t : X \to \mathbb{R} \cup \{\infty\}$
	be a family of lsc functions indexed by $t \in T$. The family
	is called  equicontinuously subdifferentiable at $\bar x \in X$ if
	for any weak$^\ast$-neighborhood $V$ of the origin in $X^*$ there is
	some $\epsilon > 0$ such that
	\begin{equation}\label{equicontinuity:equation}
	\sub f_t(x) \subseteq \sub f_t(\bar x) + V, \text{ for all } t \in T_{\epsilon}(\bar{ x}) \text{ and all } x \in \mathbb{B}(\bar x,\epsilon).
	\end{equation}
\end{definition}
{Although this definition is precisely for the framework of \cite{MR3033113}, our formulae involves the singular subdifferential of the nominal data for points close to the point of interest, due to the possible lack of Lipschitz continuity of our data. For that reason we introduce the following definition, which is satisfied trivially when the nominal data is Lipschitz continuous in a neighborhood of the point of interest.}
\begin{definition}
	Let $f_t : X \to \mathbb{R} \cup \{\infty\}$
	be a family of lsc functions indexed by $t \in T$. The family
	is called singular equicontinuously subdifferentiable at $\bar x \in X$ if 	for any weak$^\ast$-neighborhood $V$ of the origin in $X^*$ there is
	some $\epsilon > 0$ such that
		\begin{equation}\label{equicontinuity:equation0}
	\sub^\infty f_t(x) \subseteq \sub^\infty f_t(\bar x) + V, \text{ for all } t \in T \text{ and all } x \in \mathbb{B}(\bar x,\epsilon).
	\end{equation}
	\end{definition}
Finally, we say that the family of functions $\{ f_t: t\in T \}$ is \emph{total  equicontinuously subdifferentiable} at $\bar x \in X$ if $\{ f_t: t\in T \}$ is equicontinuously subdifferentiable  and singular equicontinuously subdifferentiable at $\bar x \in X$ .

{
Using the  notion of \emph{total  equicontinuously subdifferentiable}  we have the following tighter formulae, which represents an extension of \cite[Proposition 3.5]{MR3033113}.}
{
\begin{theorem}\label{Theorem:equicontinuouslysubdifferentiable}
	In the setting of \cref{Mordukhovich:Separable} assume that the family of functions $\{f_t \}_{t\in T}$ is total equicontinuously subdifferentiable at $\bar x$ and 
	\begin{align}\label{uniformcontinuity}
	\lim\limits_{x\to \bar{x}}\sup\limits_{t\in T} |f_t(x)-f_t(\bar{x})| =0.
	\end{align}
	Then
	\begin{align}
\partial f(\bar{x}) &\subseteq \bigcap\limits_{\epsilon >0} \cl^{w^\ast}  \left\{  \sum\limits_{t \in T } \lambda_t \circ \sub f_t(\bar{x}) :\begin{array}{c}
	 \lambda \in  \Delta(T) \text{ and }\\
	  \supp  \lambda \subseteq  T_\epsilon(\bar{x}) 
\end{array}
 \right\}  \text{ and } \label{Theorem:equicontinuouslysubdifferentiable01}\\
\partial^\infty f(\bar{x}) &\subseteq \bigcap\limits_{\epsilon >0}  \cl^{w^\ast} \bigg( [0,\epsilon] \cdot   \left\{  \sum\limits_{t \in T } \lambda_t \circ \sub f_t(\bar{x}) :  \begin{array}{c}
	\lambda \in  \Delta(T) \text{ and }\\
	\supp  \lambda \subseteq  T_\epsilon(\bar{x}) 
\end{array}  \right\}\bigg).\label{Theorem:equicontinuouslysubdifferentiable02}
\end{align}
\end{theorem}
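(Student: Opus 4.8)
The plan is to start from the estimate of \cref{Mordukhovich:Separable}, $\partial f(\bar x)\subseteq\bigcap_{\epsilon>0}\cl^{w^\ast}(\mathcal{S}(\bar x,\epsilon))$, and to reduce each member of $\mathcal{S}(\bar x,\epsilon)$, up to an arbitrarily small $w^\ast$-neighborhood, to an element of the collection on the right-hand side of \cref{Theorem:equicontinuouslysubdifferentiable01}, where the subgradients are evaluated exactly at $\bar x$ and the support of $\lambda$ is forced to be $\epsilon$-active. Write $\mathcal{R}_\epsilon$ for the family $\{\sum_{t\in F}\lambda_t\circ\partial f_t(\bar x): F\in\Pf(T),\ \lambda\in\Delta(F),\ \supp\lambda\subseteq T_\epsilon(\bar x)\}$, which is exactly the bracketed set of \cref{Theorem:equicontinuouslysubdifferentiable01} read under the convention that the indices $t\notin\supp\lambda$ contribute the singular term $\partial^\infty f_t(\bar x)$. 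Concretely, I would fix $\epsilon_0>0$ and a convex balanced $V_0\in\mathcal{N}_0(w^\ast)$, choose convex balanced $W\in\mathcal{N}_0(w^\ast)$ with $W+W+W\subseteq V_0$, and invoke total equicontinuous subdifferentiability at scale $W$ to obtain $\epsilon'>0$ governing both \cref{equicontinuity:equation} and \cref{equicontinuity:equation0}. Using \cref{uniformcontinuity} I would then pick $\epsilon\in(0,\epsilon']$ so small that $\delta(\epsilon):=\sup_{x\in\mathbb{B}(\bar x,\epsilon)}\sup_{t\in T}|f_t(x)-f_t(\bar x)|$ obeys $\epsilon+\delta(\epsilon)\le\min\{\epsilon_0,\epsilon'\}$. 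By \cref{Mordukhovich:Separable} there is then $z^\ast\in\mathcal{S}(\bar x,\epsilon)$ with $x^\ast\in z^\ast+W$, of the form $z^\ast=\sum_{t\in\supp\lambda}\lambda_t u^\ast_t+\sum_{t\in G}v^\ast_t$ with $u^\ast_t\in\partial f_t(x')$, $v^\ast_t\in\partial^\infty f_t(x')$, $G:=F\setminus\supp\lambda$, $x'\in\mathbb{B}(\bar x,\epsilon)$, $\lambda\in\Delta(F)$, $f_t(x')=f_F(x')$ on $\supp\lambda$ and $|f_F(x')-f(\bar x)|\le\epsilon$.

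For the part carrying the positive weights I would first verify $\supp\lambda\subseteq T_{\epsilon_0}(\bar x)$: for $t\in\supp\lambda$ the identity $f_t(x')=f_F(x')\ge f(\bar x)-\epsilon$ together with \cref{uniformcontinuity} gives $f_t(\bar x)\ge f(\bar x)-\epsilon-\delta(\epsilon)$, i.e. $t\in T_{\epsilon+\delta(\epsilon)}(\bar x)\subseteq T_{\epsilon_0}(\bar x)\cap T_{\epsilon'}(\bar x)$. Since also $x'\in\mathbb{B}(\bar x,\epsilon')$, the inclusion \cref{equicontinuity:equation} yields $u^\ast_t\in\partial f_t(\bar x)+W$; writing $u^\ast_t=\tilde{u}^\ast_t+r^\ast_t$ and using $\sum_{t\in\supp\lambda}\lambda_t=1$ with the convexity of $W$, the weighted part lies in $\sum_{t\in\supp\lambda}\lambda_t\partial f_t(\bar x)+W$.

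The delicate point, and the step I expect to be the main obstacle, is the singular part $\sum_{t\in G}v^\ast_t$: the cardinality $|G|$ is not bounded (in infinite dimensions \cref{THEOREM:FORMULA:SUPREMUM} gives no cardinality control), so a naive term-by-term application of \cref{equicontinuity:equation0} accumulates an error of size $|G|\cdot W$, which is useless. Here I would exploit that $\partial^\infty f_t(x')$ is a cone. Setting $N:=|G|$, the cone property gives $N v^\ast_t\in\partial^\infty f_t(x')\subseteq\partial^\infty f_t(\bar x)+W$ by \cref{equicontinuity:equation0}, so $N v^\ast_t=a_t+b_t$ with $a_t\in\partial^\infty f_t(\bar x)$ and $b_t\in W$, whence $v^\ast_t=N^{-1}a_t+N^{-1}b_t$ with $N^{-1}a_t\in\partial^\infty f_t(\bar x)$ (cone again). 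Summing, $\sum_{t\in G}N^{-1}b_t$ is a convex combination of the $b_t\in W$ and therefore lies in $W$, so $\sum_{t\in G}v^\ast_t\in\sum_{t\in G}\partial^\infty f_t(\bar x)+W$ with an error independent of $N$. This rescaling is exactly what breaks the otherwise circular dependence between the required precision $W$ and the a priori unknown $|G|$.

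Combining the two parts, $z^\ast\in\mathcal{R}_{\epsilon_0}+W+W$, hence $x^\ast\in z^\ast+W\subseteq\mathcal{R}_{\epsilon_0}+W+W+W\subseteq\mathcal{R}_{\epsilon_0}+V_0$; since $V_0$ and $\epsilon_0$ are arbitrary and $\cl^{w^\ast}(A)=\bigcap_{V}(A+V)$, this proves \cref{Theorem:equicontinuouslysubdifferentiable01}. For \cref{Theorem:equicontinuouslysubdifferentiable02} I would run the identical reduction starting from $\partial^\infty f(\bar x)\subseteq\bigcap_{\epsilon>0}\cl^{w^\ast}([0,\epsilon]\cdot\mathcal{S}(\bar x,\epsilon))$: applying it to the factor $z^\ast$ and multiplying by $\mu\in[0,\epsilon]$ (so $\mu\le1$ for small $\epsilon$, and $\mu(W+W)\subseteq W+W$) places $\mu z^\ast$ in $[0,\epsilon_0]\cdot\mathcal{R}_{\epsilon_0}+W+W$, which upon adding the final $W$ and letting $V_0,\epsilon_0$ vary gives \cref{Theorem:equicontinuouslysubdifferentiable02}.
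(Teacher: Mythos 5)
Your proposal is correct and follows essentially the same route as the paper: invoke \cref{Mordukhovich:Separable} at a scale $\gamma$ chosen via \cref{uniformcontinuity} so that the active indices $\supp\lambda$ land in $T_{\epsilon'}(\bar x)$, then transport each subgradient from $x'$ back to $\bar x$ using \cref{equicontinuity:equation} and \cref{equicontinuity:equation0}, absorbing the errors into finitely many copies of the neighborhood. The one place you go beyond the paper is the singular part: the paper's proof simply writes $\sum_{\lambda_t=0}\partial^\infty f_t(x')\subseteq\sum_{\lambda_t=0}\partial^\infty f_t(\bar x)+V$, silently ignoring that the number of singular terms is not bounded in the infinite-dimensional setting, whereas your rescaling by $N=|G|$ using the cone property of $\partial^\infty f_t$ makes the error independent of $N$ and is exactly the argument needed to justify that step.
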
}
\begin{proof}
	Consider $x^\ast \in \sub f(\bar{x})$, $\epsilon >0$ and $V $ a weak$^\ast$-neighborhood  of the origin. First, by \cref{equicontinuity:equation,equicontinuity:equation0}  we can take $\gamma_1>0$ such that   for all $ x \in \mathbb{B}(\bar x,\gamma_1)$
	\begin{align}
	\sub f_t(x) \subseteq &\sub f_t(\bar x) + V, \text{ for all } t \in T_{\gamma_1}(\bar{ x}) \text{ and }\label{eq:001} \\
		\sub^\infty f_t(x) \subseteq &\sub^\infty f_t(\bar x) + V \text{ for all } t \in T.\label{eq:002}
	\end{align}
Second,	by \cref{uniformcontinuity} we can take $\gamma_2>0 $ such that 
	\begin{align}\label{eq:01}
		|f_t(x) - f_t(\bar{x})| \leq \gamma_1/2, \;\forall  t\in T, \;  \forall x\in \mathbb{B}(\bar{x},\gamma_2).
	\end{align}
	 Now, by \cref{Mordukhovich:Separable} we have that for  $\gamma =\min\{ \gamma_1/2,\gamma_2,\epsilon/2 \}$ 
	 $$  x^\ast \in  \mathcal{S}(\bar{x},\gamma) + V.$$
Whence, there exists $F\in \Pf(T)$, $\lambda \in \Delta(F)$  and $x'\in \mathbb{B}(\bar{x},\gamma)$ such that  $|f_F(x')-f(\bar{x})| \leq \gamma$ and  $f_F(x')=f_t(x')$ for all $t\in \supp  \lambda$ and 
\begin{align}\label{eq:003}
	x^\ast \in \sum_{t\in F} \lambda_t \circ \sub f_t(x') +V.    
	\end{align}
 Hence, by \cref{eq:01}  we have that for all $t\in \supp \lambda$
\begin{align*}
f(\bar{x})  &\leq f_F(x') + \gamma  = f_t(x') +\gamma \leq f_t(\bar{x}) +\gamma_1/2 + \gamma\leq f_t(\bar{x} ) +\gamma_1,
	\end{align*}
which means that $t \in T_{ \gamma_1 }(\bar{x})$ and consequently $\supp \lambda   \subseteq  T_{\gamma_1} (\bar{x})$. Now, by  \cref{eq:001,eq:002,eq:003} we have
\begin{align*}
	x^\ast \in &\sum_{\lambda_t >0 } \lambda_t \cdot \sub f_t(x') + \sum_{\lambda_t =0 }  \sub^\infty f_t(x')  +V \\
	\subseteq  & \sum_{\lambda_t >0 } \lambda_t \circ \sub f_t(\bar{ x}) +\sum_{\lambda_t =0 }  \sub^\infty f_t(\bar{ x})+ V + V + V \\
	\subseteq &   \left\{  \sum\limits_{t \in T } \lambda_t \circ \sub f_t(\bar{x}) :\begin{array}{c}
		\lambda \in  \Delta(T) \text{ and }\\
		\supp  \lambda \subseteq  T_\epsilon(\bar{x}) 
	\end{array}
	\right\} + V + V + V.
	\end{align*}
Finally, from the arbitrariness of $\epsilon$ and $V$ we conclude \cref{Theorem:equicontinuouslysubdifferentiable01}. The proof of \cref{Theorem:equicontinuouslysubdifferentiable02} is similar, so we omit the proof.
	\end{proof}

\section{The convex subdifferential}\label{SECTION:CONVEXSUB}
This section is devoted to giving formulae for the convex subdifferential. Due to the closure of the graph of the convex  subdifferential  under bounded nets with respect to the $\| \cdot \| \times w^\ast$-topology in $X\times X^\ast$, we can obtain  a similar result to    \cref{Mordukhovich:Separable}  by changing the SNEC  assumption for a similar one using  nets instead of sequences.  For this purpose, it is better to  express the limiting condition of   \cref{Mordukhovich:Separable}   in terms of the  normal cone of the domain of each function $f_t$, more precisely, we recall that for any lsc convex function $h$,  the normal cone to the domain of $h$ at a point $x$ is given by   $$ N_{\dom h}(x):=\{ x^\ast \in X^\ast : \langle x^\ast , y-x\rangle \leq 0, \; \forall y \in \dom h \}.$$ Using this notation we establish the following result.

\begin{theorem}\label{TEO:CONVEX:1}
	Let  $\{ f_t :   t\in T \}$ be a family of proper convex lsc functions satisfying the following assumptions:  There exists a neighborhood $U$ of $\bar{x}$ such that
	\begin{enumerate}[label=\alph*)]
		\item For  all $x\in U$, all but one of the functions $\{ f_t :   t\in T \}$ and every net  $(\lambda_\nu, x_\nu, x_\nu^\ast) \in [0,+\infty) \times X\times X^\ast$ satisfying $\lambda_\nu \to 0$, $x_\nu \overset{f}{\to} x$, $x^\ast_\nu\in \sub f(x_\nu)$ and $\lambda_\nu x^\ast_\nu \overset{*}{\rightarrow} 0$ one has $\| \lambda_\nu x^\ast_\nu\|_{*} \to 0$.
		\item  For  all $x\in U$ and all  $F\in \Pf(T)$ 
		\begin{align*}
			w^\ast_t \in N_{\dom f_t }(x), \; t\in F \text{ and } \sum_{t\in F} w^\ast_t =0 \text{ implies  } w^\ast_t =0, \text{ for all } t \in F.
		\end{align*}
		Then 
		\begin{align}\label{EQUATION:CONVEX1}
			\sub f(\bar{x}) \subseteq \bigcap\limits_{\epsilon >0} \cl^{w^\ast}  A(\bar{x},\epsilon).
		\end{align}
	Where
		$$A(\bar{x},\epsilon):=\bigcup \bigg\{ \co\bigg( \bigcup_{_{t\in  F_1} }\sub f_t(x') \bigg) +  \sum\limits_{t \in F_2 } N_{\dom f_t}(x') \bigg\}$$
		and the union is over all $	F_1, F_2\in \Pf(T)$ and $x' \in \mathbb{B}(\bar{x},\epsilon)$ such that  $|f_t(x')- f(\bar{x})| \leq \epsilon$  and $f_{t}(x')=f_{F_1\cup F_2}  (x') \text{ for all } t' \in F_1$.
		Moreover, the equality holds, whenever the function $f$ is continuous at some point, or the space $X$ is finite-dimensional.
	\end{enumerate}
\end{theorem}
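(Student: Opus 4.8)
The plan is to re-run the proof of \cref{Mordukhovich:Separable}, now exploiting three features of the convex setting: for a proper lsc convex function the Fr\'echet, limiting and convex subdifferentials coincide; the singular subdifferential is $\sub^\infty f_t(x)=N_{\dom f_t}(x)$; and, crucially, the graph of the convex subdifferential is closed under bounded nets in the $\|\cdot\|\times w^\ast$-topology of $X\times X^\ast$. This last fact is precisely what allows us to bypass the separable-reduction machinery of \cref{Mordukhovich:Separable} and argue directly with nets, at the price of upgrading the SNEC hypothesis to its net form, assumption a).

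First I would fix $x^\ast\in\sub f(\bar{x})$, $\epsilon>0$ and $V\in\mathcal{N}_0(w^\ast)$. Since $f$ is convex and lsc we have $x^\ast\in\hat{\sub}f(\bar{x})$, so \cref{THEOREM:FORMULA:SUPREMUM} yields $F\in\mathcal{T}_\epsilon(\bar{x})$, a point $x'\in\mathbb{B}(\bar{x},f_F,\epsilon)$ and a vector $w^\ast\in x^\ast+V$ belonging to $\bigcap_{\gamma>0}\cl^{w^\ast}\{\sum_{t\in F}\lambda_t\hat{\sub}f_t(x_t):x_t\in\mathbb{B}(x',f_t,\gamma),\ \lambda\in\Delta(F,x',\gamma)\}$. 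From this membership I would extract a net $\sum_{t\in F}\lambda_t(\alpha)x_t^\ast(\alpha)\overset{w^\ast}{\to}w^\ast$, indexed by $(\gamma,W)$ with $\gamma\to0^+$ and $W$ a shrinking weak$^\ast$-neighborhood, with $\lambda(\alpha)\in\Delta(F,x',\gamma)$, $x_t(\alpha)\in\mathbb{B}(x',f_t,\gamma)$ and $x_t^\ast(\alpha)\in\sub f_t(x_t(\alpha))$. Passing to a subnet, $\lambda_t(\alpha)\to\lambda_t$ with $\sum_{t\in F}\lambda_t=1$, $x_t(\alpha)\overset{f_t}{\to}x'$, and the constraint $\lambda_t\le\gamma$ off the $\gamma$-active set forces $f_t(x')=f_F(x')$ for every $t\in\supp\lambda$.

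The argument then splits according to whether the net $\{\lambda_t(\alpha)x_t^\ast(\alpha)\}_t$ is bounded. In the bounded case I would pass to a subnet so that $\lambda_t(\alpha)x_t^\ast(\alpha)\overset{w^\ast}{\to}z_t^\ast$ for each $t\in F$, whence $w^\ast=\sum_{t\in F}z_t^\ast$; for $t\in\supp\lambda$ the bounded-net closedness of the convex subdifferential gives $z_t^\ast\in\lambda_t\sub f_t(x')$, while for $t\notin\supp\lambda$ the elementary estimate $\lambda_t(\alpha)\big(f_t(y)-f_t(x_t(\alpha))\big)\ge\langle\lambda_t(\alpha)x_t^\ast(\alpha),y-x_t(\alpha)\rangle$ passes to the limit to give $z_t^\ast\in N_{\dom f_t}(x')=\sub^\infty f_t(x')$. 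Summing, $w^\ast\in\co\big(\bigcup_{t\in\supp\lambda}\sub f_t(x')\big)+\sum_{t\in F\setminus\supp\lambda}N_{\dom f_t}(x')\subseteq A(\bar{x},C\epsilon)$ for a universal constant $C$, so that $x^\ast\in A(\bar{x},C\epsilon)+V$ and \cref{EQUATION:CONVEX1} follows after intersecting over $\epsilon$ and $V$. The unbounded case has to be excluded: setting $\eta_\alpha:=(\max_{t\in F}\|\lambda_t(\alpha)x_t^\ast(\alpha)\|_\ast)^{-1}\to0$ and passing to a subnet, $\eta_\alpha\lambda_t(\alpha)x_t^\ast(\alpha)\overset{w^\ast}{\to}w_t^\ast\in N_{\dom f_t}(x')$; since $\sum_{t\in F}\lambda_t(\alpha)x_t^\ast(\alpha)$ is weak$^\ast$-convergent it is norm-bounded, so $\eta_\alpha\sum_{t\in F}\lambda_t(\alpha)x_t^\ast(\alpha)\to0$ in norm and hence $\sum_{t\in F}w_t^\ast=0$. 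Assumption b) then forces every $w_t^\ast=0$, the net-SNEC assumption a) forces $\eta_\alpha\lambda_t(\alpha)x_t^\ast(\alpha)\to0$ in norm for every index except a single $t_0$, and writing the remaining term as $\eta_\alpha\sum_{t\in F}\lambda_t(\alpha)x_t^\ast(\alpha)-\sum_{t\ne t_0}\eta_\alpha\lambda_t(\alpha)x_t^\ast(\alpha)$ forces it to $0$ in norm as well, contradicting $\max_{t\in F}\|\eta_\alpha\lambda_t(\alpha)x_t^\ast(\alpha)\|_\ast=1$.

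For the \emph{moreover} claim I would prove the reverse inclusion $\bigcap_{\epsilon>0}\cl^{w^\ast}A(\bar{x},\epsilon)\subseteq\sub f(\bar{x})$ by convex duality. A generator of $A(\bar{x},\epsilon)$ has the form $\sum_{t\in F_1}\mu_tu_t^\ast+\sum_{t\in F_2}v_t^\ast$ with $\mu\in\Delta(F_1)$, $u_t^\ast\in\sub f_t(x')$ and $v_t^\ast\in N_{\dom f_t}(x')$ at nearly-active indices; combining the subgradient inequalities $f(y)\ge f_t(y)\ge f_t(x')+\langle u_t^\ast,y-x'\rangle$ with $f_t(x')\ge f(\bar{x})-\epsilon$ and the normality inequalities $\langle v_t^\ast,y-x'\rangle\le0$ on $\dom f$ shows that each such generator is an approximate (convex) $\epsilon$-subgradient of $f$ at $\bar{x}$. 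The hard part is the passage $\epsilon\to0$: this is exactly where continuity of $f$ at some point (or finite-dimensionality) is indispensable, since it guarantees that the sums $\sum_{t\in F_2}N_{\dom f_t}(x')$ stay inside a fixed weak$^\ast$-closed cone attached to $\dom f$ and that the $\epsilon$-subdifferentials collapse onto $\sub f(\bar{x})$; without such a qualification the sum of the normal cones need not be weak$^\ast$-closed and the reverse inclusion may fail.
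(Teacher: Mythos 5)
Your overall strategy is the one the paper itself declares and then omits: rerun the argument of \cref{Mordukhovich:Separable} with nets in place of the separable-reduction machinery, using that for convex data the subdifferentials coincide, that $\sub^\infty f_t(x)=N_{\dom f_t}(x)$, and that the graph of the convex subdifferential is closed under bounded nets. Your bounded case, the identification of the $F_1/F_2$ split with $\supp\lambda$ versus its complement, and your derivation of the reverse inclusion from the subgradient and normality inequalities (with continuity or finite-dimensionality used to control the boundary term in the limit) all match the paper's intended and displayed computations.

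There is, however, one step that fails as written, and it sits exactly where sequences and nets part ways. In the degenerate case you assert that ``since $\sum_{t\in F}\lambda_t(\alpha)x_t^\ast(\alpha)$ is weak$^\ast$-convergent it is norm-bounded, so $\eta_\alpha\sum_{t\in F}\lambda_t(\alpha)x_t^\ast(\alpha)\to0$ in norm.'' Banach--Steinhaus gives norm-boundedness for weak$^\ast$-convergent \emph{sequences}, not for nets: your net only satisfies $\sum_{t\in F}\lambda_t(\alpha)x_t^\ast(\alpha)=w^\ast-v^\ast(\alpha)$ with $v^\ast(\alpha)$ ranging over shrinking weak$^\ast$-neighborhoods of the origin, which carries no norm control on $v^\ast(\alpha)$ at all. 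The weak$^\ast$ conclusion $\sum_{t\in F}w_t^\ast=0$ survives (test against each fixed $x\in X$ and use $\eta_\alpha\to 0$ together with the eventual boundedness of the convergent net of scalars), so assumption b) still forces every $w_t^\ast=0$; but your final contradiction requires the \emph{norm} convergence $\eta_\alpha\lambda_{t_0}(\alpha)x_{t_0}^\ast(\alpha)\to 0$ for the single index $t_0$ not covered by assumption a), and the identity you invoke for it contains the uncontrolled term $\eta_\alpha v^\ast(\alpha)$. This is precisely the obstruction that the separable reduction in \cref{Mordukhovich:Separable} removes (there the residual is a bounded sequence). To close the gap you must either produce the fuzzy decomposition with a norm-small residual --- e.g.\ by replacing the extraction from the weak$^\ast$-closure with an exact or norm-approximate convex sum rule in the spirit of the Hiriart-Urruty--Phelps $\epsilon$-subdifferential calculus --- or show that the index realizing $\max_{t\in F}\|\eta_\alpha\lambda_t(\alpha)x_t^\ast(\alpha)\|_\ast=1$ may be taken among those to which assumption a) applies.
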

\begin{proof}
	Since the proof of \cref{EQUATION:CONVEX1} relies on similar arguments as   \cref{Mordukhovich:Separable} (but without the use of techniques of separable reduction) we prefer to omit the proof. Now, any point in the right-hand side of \cref{EQUATION:CONVEX1} is the limit of a net  $w^\ast$, which has the form of $w^\ast_\nu=\sum \lambda_\nu(t) v_\nu(t)^\ast + \sum w^\ast_\nu(t)$ with $v_\nu \in \sub f_t(x_\nu)$, $w_\nu^\ast(t)\in N_{\dom f_t}(x_\nu)$, $\sum \lambda_\nu(t) =1$ and  $\lambda_\nu(t)\geq 0$, then one gets for every $y\in  X$ 
	\begin{align*}
		\langle w^\ast_\nu , y - \bar{x} \rangle \leq f(y) -f(x) +  | f_t(x')- f(\bar{x})|  + \langle w^\ast_\nu ,   x_\nu- \bar{x} \rangle.
	\end{align*}
	Therefore, we can conclude the  equality in \cref{EQUATION:CONVEX1} whenever the $\lim \langle w^\ast_\nu ,   x_\nu- \bar{x} \rangle=0$, and this holds in particular when the function $f$ is continuous at some point, or the space $X$ is finite-dimensional, because in these  cases the net $\{  w^\ast_\nu\}$ is bounded.
\end{proof}
The following  results have the intention of establishing formulae without any qualification. This is possible by reducing  the analysis to subspaces with nice properties for the family of functions. For that reason we denoted by  $\mathcal{F}_x$ the set of all finite-dimensional affine subspaces containing $x$. This class of sets allows us to give  formulae in any (Hausdorff) locally convex  topological vector space (lcs for short).  It is useful  to recall some simple facts about lcs available in  pioneer  books such as \cite{MR910295,MR1741419}: The topology on every lcs $X$ is generated by a family of seminorms $\{ \rho_i : i \in \mathcal{I}\}$, which will be always assumed to be \emph{up-directed}, i.e., for every two points $i_1,i_2\in \mathcal{I}$ there exists $i_3 \in \mathcal{I}$ such that $\rho_{i_3} (x) \geq \max\{\rho_{i_1}(x),\rho_{i_2}(x) \}$ for all $x\in X$. For  a point $\bar{x}$ in $X$, $r\geq 0$ and a seminorm $\rho$ we define $\mathbb{B}_{\rho}(\bar{x},r):= \{ x \in X: \rho(x-\bar{x}) \leq 0 \}$. In the (topological) dual of   $X$, denoted by $X^\ast$, some examples of topologies are  the \emph{$w^{\ast }$-topology} denoted by $w(X^{\ast },X)$ ($w^{\ast },$ for	short), which is the topology generated by the pointwise convergence, and the \emph{strong topology} denoted by $\beta (X^{\ast },X)$  ($\beta$, for short), which is the topology generated by the uniform convergence on bounded sets. For a set $A\subseteq X^\ast$, the symbol $\beta\text{-}seq\text{-}A$ denotes the set of points which are the limit, with respect to the $\beta$-topology, of some sequence lying in $A$. Finally, for a function $g:X\to \Rex$, $\cco g$ denotes the convex lsc envelope of $g$. For more details  about the theory of convex analysis in lcs we refer to \cite{MR0467080,MR1921556,moreau1967fonctionnelles}.

Now, let us establish the first general formula without any qualification condition.
\begin{theorem}\label{corollaryconvex1}
	Let $X$ be an lcs, let $\mathcal{I}$ be a family of seminorms which generate the topology on $X$. Consider a   family of proper convex lsc functions  $\{ f_t :   t\in T \}$. Then, for all $\bar{x}\in X$ 
	\begin{align}\label{corollaryconvex1:eq1}
		\sub f(\bar{x})=\bigcap\limits_{\substack{\epsilon >0, \rho \in \mathcal{I}\\ L\in \mathcal{F}_x }} \beta\text{-}seq\text{-}\cl A_{\epsilon,L,\rho}(\bar{x}),
		\end{align}
	where
	\begin{align*}
	A_{\epsilon,L,\rho}(\bar{x}):= \bigcup \bigg\{  \co\bigg( \bigcup_{t\in  F_1}\sub  f_{t,L}(x') \bigg) +  \sum\limits_{t \in F_2 } N_{\dom f_t \cap L}(x')  \bigg\}    .
	\end{align*}
Where $f_{t,L}:=f_t + \delta_{\aff(\dom f \cap L)}$ and the union is over all $x' \in \mathbb{B}_{\rho}(\bar{x},\epsilon)\cap L$ and $F_1, F_2\in \Pf(T)$ such that $	f_{t}(x')=f_{F_1 \cup F_2} (x')$  for all  $t \in F_1$ and $|f_t(x')- f(\bar{x})| \leq \epsilon$.
\end{theorem}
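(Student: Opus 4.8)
The plan is to reduce the computation of $\sub f(\bar{x})$ in the lcs $X$ to finite-dimensional slices, where the exact (equality) formula of \cref{TEO:CONVEX:1} is available, and then to reassemble the full dual object through the $\beta$-sequential closures. The backbone is the classical reduction
\begin{align*}
\sub f(\bar{x})=\bigcap_{L\in\mathcal{F}_{\bar{x}}}\sub(f+\delta_L)(\bar{x}),
\end{align*}
whose nontrivial inclusion follows by testing the subgradient inequality at an arbitrary $y$ along the line $L:=\aff\{\bar{x},y\}\in\mathcal{F}_{\bar{x}}$, on which $(f+\delta_L)(y)=f(y)$. It therefore suffices to analyse $\sub(f+\delta_L)(\bar{x})$ for each fixed $L$, to match it with $\bigcap_{\epsilon,\rho}\beta\text{-}seq\text{-}\cl A_{\epsilon,L,\rho}(\bar{x})$, and finally to intersect over $L$ to obtain \cref{corollaryconvex1:eq1} as an equality.

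First I would pass from $L$ to $N:=\aff(\dom f\cap L)$. Since $\dom f\cap L\subseteq N\subseteq L$, every $y\in L\setminus N$ lies outside $\dom f$, so the subgradient inequalities for $f+\delta_L$ and $f+\delta_N$ coincide and $\sub(f+\delta_L)(\bar{x})=\sub(f+\delta_N)(\bar{x})$; moreover the latter is the cylinder $\{x^\ast\in X^\ast:x^\ast|_N\in\sub_N(f|_N)(\bar{x})\}$ over the subdifferential of the restriction $f|_N$ inside the finite-dimensional space $N$. The decisive gain of replacing $L$ by $N$ is that $\dom(f|_N)=\dom f\cap L$ has $N$ as its affine hull, hence nonempty relative interior in $N$, so the finite-dimensional convex function $f|_N=\sup_t(f_t|_N)$ is continuous at a relative interior point.

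This continuity, together with the choice $N=\aff(\dom f\cap L)$, is precisely what neutralizes the qualification hypotheses of \cref{TEO:CONVEX:1} on $N$: condition a) (SNEC) is automatic in finite dimensions, while condition b) holds on all of $N$ by a short argument. Indeed, if $\sum_{t\in F}w_t=0$ with $w_t\in N_{\dom f_t\cap N}(x)$, then testing against any $z\in\dom f\cap N\subseteq\dom f_t\cap N$ gives $\langle w_t,z-x\rangle\le0$ for each $t$ while the sum over $F$ vanishes, forcing $\langle w_t,z-x\rangle=0$; as $z$ ranges over $\dom f\cap N$ these differences span the direction space of $N$, whence each $w_t=0$. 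Thus I would apply the equality case of \cref{TEO:CONVEX:1} to $\{f_t|_N\}_{t\in T}$ on $N$, obtaining $\sub_N(f|_N)(\bar{x})=\bigcap_{\epsilon>0}\cl A^N_\epsilon(\bar{x})$, where $A^N_\epsilon$ is the $N^\ast$-valued analogue of $A_{\epsilon,L,\rho}$ built from $\sub_N(f_t|_N)$ and the normal cones $N_{\dom f_t\cap N}$.

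The remaining step is the lifting of this formula to $X^\ast$: each $\sub_N(f_t|_N)(x')$ is the trace on $N$ of $\sub(f_t+\delta_N)(x')=\sub f_{t,L}(x')$, and each normal cone $N_{\dom f_t\cap N}(x')$ is the trace of $N_{\dom f_t\cap L}(x')$, so every element of $A^N_\epsilon$ is the restriction of an element of $A_{\epsilon,L,\rho}(\bar{x})$, the seminorm balls $\mathbb{B}_\rho(\bar{x},\epsilon)$ replacing the finite-dimensional metric balls and encoding the convergence $x'\to\bar{x}$ in the lcs topology. I expect the main obstacle to be exactly this passage, namely proving that for each fixed $L$
\begin{align*}
\bigcap_{\epsilon>0,\,\rho\in\mathcal{I}}\beta\text{-}seq\text{-}\cl A_{\epsilon,L,\rho}(\bar{x})=\sub(f+\delta_N)(\bar{x}),
\end{align*}
i.e. that the strong sequential closure of the $X^\ast$-lifts, intersected over $\epsilon$ and over all seminorms $\rho$, reproduces the cylinder over $\sub_N(f|_N)(\bar{x})$ without adding spurious functionals or losing genuine ones; here the strong topology $\beta$ is the right choice because the lifted approximants live effectively in finite dimensions, while the annihilator directions of $N$ are supplied by letting $L$ grow in the final intersection and by the freedom already present in the normal-cone summands. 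Once this identity is secured, intersecting over $L\in\mathcal{F}_{\bar{x}}$ and invoking the backbone reduction yields both inclusions of \cref{corollaryconvex1:eq1} simultaneously.
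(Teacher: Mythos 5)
Your reduction is the paper's own: pass to $W:=\aff(\dom f\cap L)$, where the finite\-/dimensional convex function $f_{|_W}$ is continuous at a relative interior point of its domain, so that \cref{TEO:CONVEX:1} applies with equality; your verification of hypothesis b) (testing $\sum_{t\in F}\langle w_t,z-x\rangle=0$ against $z\in\dom f\cap W$ and using that these points affinely span $W$) is in fact more explicit than what the paper writes. The difference is organizational: you package everything as a per\-/$L$ identity $\sub(f+\delta_W)(\bar x)=\bigcap_{\epsilon,\rho}\beta\text{-}seq\text{-}\cl A_{\epsilon,L,\rho}(\bar x)$, whereas the paper proves the inclusion ``$\subseteq$'' of \cref{corollaryconvex1:eq1} per $L$ and the inclusion ``$\supseteq$'' globally, choosing $L$ adapted to each test point $y\in\dom f$. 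Your framing is recoverable, but it concentrates all the difficulty in a statement you then leave unproven.

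Concretely, two steps are missing. First, the lifting: the paper sets $x_n^\ast:=P^\ast(w_n^\ast)+x^\ast-P^\ast(x^\ast_{|_W})$, notes that the correction term lies in $W^\perp$ (the annihilator of the direction space of $W$) and is absorbed because $f_{t,L}=f_t+\delta_W$ forces $\sub f_{t,L}(x')+W^\perp\subseteq\sub f_{t,L}(x')$, and then checks $\beta$-convergence by estimating $\sigma_V(x^\ast-x_n^\ast)$ with $V=P^{-1}(\mathbb{B}_W)$, a neighborhood of the origin and hence absorbing every bounded set; you describe the traces but never produce the lift or this estimate. Second, and more seriously, the inclusion of $\bigcap_{\epsilon,\rho,L}\beta\text{-}seq\text{-}\cl A_{\epsilon,L,\rho}(\bar x)$ into $\sub f(\bar x)$ --- which you explicitly call ``the main obstacle'' --- receives no argument at all. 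The paper's proof of it is short but essential: for $y\in\dom f$ pick $L\ni y$ and $\rho\in\mathcal I$ a norm on $L$, extract $w_n^\ast\overset{\beta}{\to}x^\ast$ and $x_n\in\mathbb{B}_\rho(\bar x,\epsilon_n)\cap L$ from the definition of $A_{\epsilon_n,L,\rho}$, use the structure of that set (convex combinations of subgradients at indices with $f_t(x_n)=f_{F_1\cup F_2}(x_n)$ and $|f_t(x_n)-f(\bar x)|\le\epsilon_n$, plus normal vectors to $\dom f_t\cap L$ at $x_n$) to obtain $\langle w_n^\ast,y-x_n\rangle\le f(y)-f(\bar x)+\epsilon_n$, and pass to the limit using that $\{y-x_n\}$ is bounded (so $\beta$-convergence controls $\langle w_n^\ast-x^\ast,y-x_n\rangle$) and that $x_n\to\bar x$ in the finite\-/dimensional $L$ (so $\langle x^\ast,x_n\rangle\to\langle x^\ast,\bar x\rangle$). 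Without this limiting argument the equality in \cref{corollaryconvex1:eq1} is not established.
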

\begin{proof}
	W.l.o.g. we may assume that $\bar{x}=0$. Consider $\epsilon>0$, $L\in   \mathcal{F}_x$, and $\rho$ a seminorm on $X$, also we can assume that $\rho$ is a norm on $L$, because $A_{\epsilon,L,\rho_1}(0) \subseteq A_{\epsilon,L,\rho}(0)$,  for any $\rho_1 	\geq \rho$. Consider $W:=\aff(\dom f \cap L)$, let us show that
	 \begin{equation}\label{eq:100}
		\sub (f + \delta_{W})(0)\subseteq \beta\text{-}seq\text{-}\cl  A_{\epsilon, L,\rho }(0).
	\end{equation}	 
Indeed, take  $x^\ast \in \sub (f + \delta_{W})(0)$ and let  $P : X\to (W,\rho)$ be a continuous linear projection. Hence, $x^\ast_{|_W}$ (the restriction of  $x^\ast$ to $W$) belongs to $\sub f_{|_W}(0)$. The finite-dimensionality of $W$ gives us the continuity of $f_{|_W}$ at some point (see \cite{MR1451876}), so    the family $(f_t)_{|_W}$ satisfies the hypotheses of  \cref{TEO:CONVEX:1}. Whence, there exists a sequence $w^\ast_n \to x^\ast_{|_W}$ 
  where 
  $$w^\ast_n \in \co\bigg( \bigcup_{_{t\in  F_{1,n}} }\sub ((f_t)_{|W})(x_n') \bigg)  +  \sum\limits_{t \in F_{2,n} } N_{\dom (f_t)_{|W}}(x_n') $$
with $F_{1,n},F_{2,n} \in \Pf(T)$, $x_n' \in \mathbb{B}_{\rho}(0,\epsilon)\cap W$ such that  $|f_t(x_n')- f(\bar{x})| \leq \epsilon$  and $f_{t}(x_n')=\max_{F_{1,n}\cup F_{2,n}}  f_t (x')$  for all  $t \in F_{1,n}$.

	Now we define $x_n^\ast:=P^\ast(w_n^\ast)+ x^\ast - P^\ast(x^\ast_{|W})$, it follows that 
$x_n^\ast\in 	A_{\epsilon,L,\rho}(0)$. Moreover, considering $V:=P^{-1}(\mathbb{B}_{W})$, where $\mathbb{B}_{W}$ is the unit ball in $W$, we get 
\begin{align*}
	\sigma_{V}(x^\ast - y_n^\ast)&=\sup_{v\in V} \langle x^\ast - y^\ast_n, v\rangle = \sup_{v\in V} \langle P^\ast(w_n^\ast) - P^\ast(x^\ast_{|W}) , v\rangle  \\&= \sup_{h \in \mathbb{B}_{W}} \langle z_n^\ast- x^\ast_{|W} , h\rangle \to 0.
	\end{align*}
Which concludes  \cref{eq:100}, then using that  $$\sub f(0)=\bigcap\limits_{L\in \mathcal{F}_0} \sub (f +\delta_{\aff (\dom f \cap L)})(0) \subseteq \bigcap\limits_{\substack{\epsilon >0, \rho \in \mathcal{I}\\ L\in \mathcal{F}_0 }} \beta\text{-}seq\text{-}\cl A_{\epsilon,L,\rho}(\bar{0}),$$ we get the first inclusion  in \cref{corollaryconvex1:eq1}.

	Now, pick $x^\ast \in \bigcap\limits_{\substack{\epsilon >0, \rho \in \mathcal{I}\\ L\in \mathcal{F}_0 }} \beta\text{-}seq\text{-}\cl A_{\epsilon,L,\rho}(0)$ and  $y \in  \dom f$. Then, take a sequence $\epsilon_n \to 0$ and pick $L\in \mathcal{F}_0$ which contains $y$ and  consider $\rho \in \mathcal{I}$ such that $\rho$ is a norm on $L$ and  $\rho(x_n) \to 0$ implies $|\langle x^\ast, x \rangle |\to 0$. Hence, there exist  sequences $F_{1,n},F_{2,n} \in \Pf(T)$, $x_n \in \mathbb{B}_{\rho}(0,\epsilon_n)\cap L$ and  $w^\ast_{n} \in X^\ast$ such that  $w^\ast_n \overset{\beta}\to x^\ast$,   
	\begin{align*}
		w^\ast_n  \in \co\bigg( \bigcup_{t\in  F_{1,n}}\sub  f_{t,L}(x_n) \bigg) +  \sum\limits_{t \in F_{2,n} } N_{\dom f_t \cap L}(x_n) 
		\end{align*}
and $|f_t(x_n)- f(0)| \leq \epsilon_n$,   $f_{t}(x_n)=\max_{F_{1,n}\cup F_{2,n}}  f_t (x_n)$  for all  $t \in F_{1,n}$, which implies
		\begin{align}\label{eq234}
	\langle w^\ast_n, y - x_n \rangle \leq f(y) - f(0) + \epsilon_n.
	\end{align}

We claim that $	\langle w^\ast_n, y - x_n \rangle \to 	\langle x^\ast, y \rangle$. Indeed, because $\rho$ is a norm in $L$, $x_n \in L$ and $\rho(x_n) \to 0$ necessarily $x_n \to 0$ with respect to the topology on $X$. Hence, the set $B:=\{y- x_n: n\in \N\} $ is bounded, so 
\begin{align*}
|	\langle w^\ast_n , y-x_n \rangle  -\langle x^\ast, y \rangle | &=  |	\langle w^\ast_n -x^*, y-x_n \rangle  -\langle x^\ast, x_n  \rangle  |\\&\leq  \sigma_{B}(w^\ast_n -x^*) + 	|\langle x^* , x_n \rangle|\to 0.
	\end{align*}
Finally, taking $n \to \infty$ in \cref{eq234}  it yields 
$\langle x^\ast , y - x \rangle \leq f(y) - f(0)$, which concludes the proof due to the arbitrariness of $y\in \dom f$.
\end{proof}

The final goal of this paper is to give an alternative proof of \cite[Corollary 6]{MR3561780}, which, as far as we know, appears to be the most general extension of \cite[Theorem 4]{MR2448918}. Before presenting this proof we need the following lemma. This result  is interesting by itself, since it allows us to understand the subdifferential of any function in terms of the subdifferential of another function.
\begin{lemma}\label{lemma:Rock}
	Let $X$ be an lcs, let $h,g: X \to \Rex$ be two convex lsc proper functions and let $D \subseteq \dom h$ be a convex subset such that 
	$$h (x)=g(x) \text{ for all }x\in D.$$
	 Then for every $\bar{x}\in X$ 
	\begin{align}\label{We2}
	\sub (h+\delta_{D}) (\bar{x}) =  \bigcap\limits_{ \substack{ L\in \mathcal{F}_{\bar{x}}}} \bigg\{\co\left\{ S_L(\bar{x}) \right\}+ N_{ D \cap  L}(\bar{x})\bigg\},
\end{align}
	where	$S_L(\bar{x}):=\limsup \sub (g+ \delta_{\aff(D\cap L)})(x')$, the $\limsup$  is understood to be the set of all  $x^\ast \in X^\ast$, which are the  limit (in the $\beta$-topology) of some sequence $x^\ast_n \in \sub (g+ \delta_{\aff(D\cap L)}(x_n)$ with $x_n \in \ri_{L}(D)$,   $x_n \overset{g}{\to}\bar{x}$ and $|\langle x_n^*, x_n - \bar{x}\rangle| \to 0$. Here, $ \ri_{L}(D)$ denotes  the interior of $D\cap L$ with respect to $\aff(D\cap L)$.
\end{lemma}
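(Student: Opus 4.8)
The plan is to reduce the identity to a family of finite-dimensional problems, one for each $L\in\mathcal{F}_{\bar x}$, and then handle the two inclusions separately. The reduction rests on the elementary fact (already used in the proof of \cref{corollaryconvex1}) that for any convex function $\phi$ one has $\sub\phi(\bar x)=\bigcap_{L\in\mathcal{F}_{\bar x}}\sub(\phi+\delta_L)(\bar x)$: the inclusion ``$\subseteq$'' is trivial, while ``$\supseteq$'' follows by testing a candidate $x^\ast$ against an arbitrary $y\in\dom\phi$ along the line $L=\aff\{\bar x,y\}$. Applying this to $\phi=h+\delta_D$ and using $\delta_D+\delta_L=\delta_{D\cap L}$ gives $\sub(h+\delta_D)(\bar x)=\bigcap_{L}\sub(h+\delta_{D\cap L})(\bar x)$, so it suffices to match this, term by term over $L$, with the right-hand side of \cref{We2}. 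I may assume $\bar x\in D$, since otherwise both sides are empty. For a fixed $L$ I write $C:=D\cap L$ and $W:=\aff(D\cap L)$, with $V_W$ the direction space of $W$; then $\ri_L(D)=\ri C$ (relative to $W$) and, since $C\subseteq W$, every functional vanishing on $V_W$ lies in $N_{D\cap L}(\bar x)$, i.e. $V_W^{\perp}\subseteq N_{D\cap L}(\bar x)$.

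\emph{The inclusion ``$\supseteq$''.} Take $x^\ast$ in the right-hand side of \cref{We2} and fix $y\in D$; I must show $\langle x^\ast,y-\bar x\rangle\le h(y)-h(\bar x)$, the inequality being automatic for $y\notin D$. Choosing $L=\aff\{\bar x,y\}$ I write $x^\ast=s+\nu$ with $s\in\co\{S_L(\bar x)\}$ and $\nu\in N_{D\cap L}(\bar x)$, so that $\langle\nu,y-\bar x\rangle\le0$. Expanding $s$ as a finite convex combination of elements $s_j\in S_L(\bar x)$, each $s_j$ is a $\beta$-limit of subgradients $x_{j,n}^\ast\in\sub(g+\delta_{\aff(D\cap L)})(x_{j,n})$ with $x_{j,n}\in\ri_L(D)\subseteq D$, $x_{j,n}\overset{g}{\to}\bar x$ and $\langle x_{j,n}^\ast,x_{j,n}-\bar x\rangle\to0$. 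Testing the subgradient inequality at $y\in D\cap L\subseteq W$ and using $g=h$ on $D$ yields $\langle x_{j,n}^\ast,y-\bar x\rangle+\langle x_{j,n}^\ast,\bar x-x_{j,n}\rangle\le h(y)-h(x_{j,n})$. Passing to the limit, the first term tends to $\langle s_j,y-\bar x\rangle$ (the $\beta$-limit tested on the fixed vector $y-\bar x$), the second tends to $0$ (the attentive condition), and $h(x_{j,n})=g(x_{j,n})\to g(\bar x)=h(\bar x)$; hence $\langle s_j,y-\bar x\rangle\le h(y)-h(\bar x)$. Averaging over $j$ and adding $\langle\nu,y-\bar x\rangle\le0$ gives the claim. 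This is the lcs analogue of the second half of the proof of \cref{corollaryconvex1}, and the attentive condition $\langle x_{j,n}^\ast,x_{j,n}-\bar x\rangle\to0$ built into $S_L(\bar x)$ is exactly what makes the limit go through.

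\emph{The inclusion ``$\subseteq$''.} Fix $L$ and take $x^\ast\in\sub(h+\delta_D)(\bar x)$; restricting the defining inequality to $y\in C$ gives $x^\ast\in\sub(h+\delta_{C})(\bar x)$. Since $C\subseteq\dom h\cap W\subseteq W$ and $\aff C=W$, we get $\aff(\dom h\cap W)=W$ and therefore $\ri C\subseteq\ri(\dom h\cap W)=\ri(\dom(h+\delta_W))$, so $h+\delta_W$ and $\delta_C$ have intersecting relative interiors of their domains. The Moreau--Rockafellar sum rule in the finite-dimensional space $L$ then yields $\sub(h+\delta_{C})(\bar x)=\sub(h+\delta_W)(\bar x)+N_{C}(\bar x)$, and since $N_{C}(\bar x)$ is a convex cone it suffices to prove $\sub(h+\delta_W)(\bar x)\subseteq\co\{S_L(\bar x)\}+N_{C}(\bar x)$. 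Because $\ri C$ is open in $W$ and $g=h$ on $D\supseteq C$, the functions $g+\delta_W$ and $h+\delta_W$ coincide on a $W$-neighborhood of every $x'\in\ri C$; by the local nature of the convex subdifferential their subdifferentials agree there, so in $S_L(\bar x)$ I may replace $g$ by $h$. Setting $\psi:=h+\delta_W$, it remains to show that every $\sigma\in\sub\psi(\bar x)$ lies in $\co\{S_L(\bar x)\}+N_{C}(\bar x)$. For this I would invoke the classical finite-dimensional description of $\sub\psi(\bar x)$ for a closed proper convex $\psi$ as the convex hull of its bounded limiting subgradients along sequences reaching $\bar x$ through $\ri C\subseteq\ri(\dom\psi)$, augmented by $N_{\dom\psi}(\bar x)$: the recession part is absorbed because $C\subseteq\dom\psi$ forces $N_{\dom\psi}(\bar x)\subseteq N_{C}(\bar x)$, and any convergent subgradient sequence $x_n^\ast\to\sigma'$ with $x_n\to\bar x$ is bounded, hence satisfies $\langle x_n^\ast,x_n-\bar x\rangle\to0$ and belongs to $S_L(\bar x)$. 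Lifting the resulting functionals from $W$ back to $X^\ast$ is harmless, the added annihilator $V_W^{\perp}$ being contained in $N_{C}(\bar x)$.

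The main obstacle is precisely this last finite-dimensional step: expressing the subdifferential at a relative boundary point $\bar x$ of $C$ as a convex hull of subgradients taken at relative interior points, while simultaneously (i) guaranteeing that the approaching sequences can be chosen \emph{attentively}, so that $g(x_n)\to g(\bar x)$ and $\langle x_n^\ast,x_n-\bar x\rangle\to0$ hold (e.g.\ by approaching $\bar x$ along segments issuing from a fixed point of $\ri C$, along which convexity forces $\psi(x_n)\to\psi(\bar x)$), and (ii) charging the unbounded, recession directions of $\sub\psi(\bar x)$ to $N_{C}(\bar x)$ rather than to $\co\{S_L(\bar x)\}$. Everything else---the reduction to $\mathcal{F}_{\bar x}$ and the ``$\supseteq$'' inclusion---is routine once the attentive condition in the definition of $S_L(\bar x)$ is exploited as above.
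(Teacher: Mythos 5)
Your reduction to the finite-dimensional subspaces $L\in\mathcal{F}_{\bar x}$ and your proof of the inclusion ``$\supseteq$'' match the paper's argument and are sound. The gap is in the hard inclusion, exactly at the step you yourself flag as the main obstacle, and the resolution you sketch does not work. After splitting $\sub (h+\delta_{D\cap L})(\bar x)=\sub (h+\delta_{W})(\bar x)+N_{D\cap L}(\bar x)$ by Moreau--Rockafellar, you invoke a ``classical finite-dimensional description'' of $\sub\psi(\bar x)$, for $\psi=h+\delta_W$, as the convex hull of limits of subgradients taken along sequences approaching $\bar x$ \emph{through} $\ri_L(D)$, plus $N_{\dom\psi}(\bar x)$. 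No such description holds: Rockafellar's representation requires the approximating points to range over all of $\ri(\dom\psi)$ (or over the dense set of differentiability points), and restricting them to the possibly much smaller relatively open set $\ri_L(D)$ yields in general a strictly smaller set. Concretely, take $X=\R$, $h=g=|\cdot|$, $D=[0,1]$, $\bar x=0$, $L=\R$, so $W=\R$ and $\psi=|\cdot|$: then $\sub\psi(0)=[-1,1]$, but subgradients at points of $\ri_L(D)=(0,1)$ only produce the limit $1$, and $\co\{1\}+N_{\dom\psi}(0)=\{1\}\neq[-1,1]$. The element $-1$ is reached only through sequences $x_n<0$, which lie outside $D$, where nothing ties $h$ to $g$ and which are excluded from $S_L(0)$. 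The final inclusion of the lemma survives in this example only because of the extra cone $N_{D\cap L}(0)=(-\infty,0]$, but your argument never produces that decomposition: it charges the deficit to $N_{\dom\psi}(\bar x)=\{0\}$, which is too small.

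The paper avoids this by not decoupling the constraint before applying the representation theorem: it first replaces $D\cap L$ by $\cl(D\cap L)$ (accessibility lemma, so the function is closed) and then applies \cite[Theorem 25.6]{MR1451876} to $(h+\delta_{\cl(D\cap L)})_{|_W}$. The domain of that function is $\dom h\cap\cl(D\cap L)$, whose relative interior is exactly $\ri_L(D)$; hence the approximating points $u_{n,i}$ are automatically forced into $\ri_L(D)$, where $\sub (h+\delta_{\cl(D\cap L)})_{|_W}$ coincides with $\sub g_{|_W}$ by locality of the convex subdifferential, and the normal-cone term is $N_{\dom h\cap\cl(D\cap L)}(\bar x)\subseteq N_{D\cap L}(\bar x)$. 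Unbounded combinations $\alpha_{n,i}u^\ast_{n,i}$ are excluded because otherwise the normal cone to the domain of the (relatively continuous) constrained function would contain a line, and the terms with vanishing multipliers are absorbed into $N_{D\cap L}(\bar x)$. If you replace $h+\delta_W$ by $h+\delta_{\cl(D\cap L)}$ in your last step, the rest of your outline (attentiveness from boundedness of the subgradients, $g(x_n)\to g(\bar x)$ from the subgradient inequality together with lower semicontinuity, and the lift by $P^\ast$ with $W^{\perp}\subseteq N_{D\cap L}(\bar x)$) goes through as in the paper.
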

\begin{proof}
	W.l.o.g. we may assume that $\bar{x}=0$. First we notice  that
		\begin{align}\label{We0}
		\sub (h + \delta_{D})(0) = \bigcap_{ L\in \mathcal{L}_0 }  \sub (h + \delta_{D\cap L} )(0)= \bigcap_{ L\in \mathcal{L}_0 }  \sub (h + \delta_{\cl(D\cap L)} )(0).
	\end{align} 
	Indeed, the first inequality is straightforward and the second follows from the fact that $\sub (h + \delta_{D\cap L} )(0) =  \sub (h + \delta_{\cl(D\cap L)} )(0)$ thanks to the \emph{accessibility lemma} (see, e.g., \cite{MR1992991}).	
	Now, fix   $ L\in \mathcal{F}_0$, define $W= \aff ( L \cap D )$ and consider a  continuous linear projection $P:X \to W$. We claim that 
	\begin{align}\label{We}
		\sub (h + \delta_{\cl(D\cap L) })(0) \subseteq \co\left\{ S_L(0) \right\}+ N_{\dom f \cap D \cap  L}(0).
		\end{align}
	Indeed, take $x^* \in \sub (h + \delta_{\cl( L\cap  D) })(0)$, using the same finite-dimensional representation as in the proof of   \cref{corollaryconvex1}, one gets the existence of a point $y^\ast  \in \sub (h+\delta_{D\cap L} )_{|_W}(0)$ and $z^\ast \in W^{\perp}$ such that $x^\ast = P^\ast(y^\ast) + z^\ast$. Then, by the finite-dimensionality of $W$ $\ri_{\aff(D\cap L)}$ is not empty and consequently $(h + \delta_{\cl( L\cap  D) })_{|_W}$ has a point of continuity (relative to its domain). Then, we  apply \cite[Theorem 25.6]{MR1451876} and  we get the existence of  sequences $u_{n,i} \in \ri(\dom (h + \delta_{\cl( L\cap  D) })_{|_W})$, $ y_n^\ast, u_{n,i}^\ast \in W^\ast$,  $ \alpha_{n,i} \geq 0$ with $\sum_{i=1}^N \alpha_i=1$ and a point $\theta^\ast \in N_{\dom (h_{|_W})}(0)$  such that $y^\ast = \lim y_n^\ast  + \theta^\ast$, $y_n^\ast=\sum_{i=1}^{N} \alpha_{n,i} u^\ast_{n,i}$,   $u_{n,i}^\ast\in \sub (h + \delta_{\cl( L\cap  D) })_{|_W}(u_{n,i})$   and $u_{n,i} \to 0$, where the number $N= \dim W +1$ is fixed   by virtue of  Carath\'eodory's Theorem.
	
	 Now, $\sub (h + \delta_{\cl( L\cap  D) })_{|_W}(u_{n,i})= \sub h_{|_W}(u_{n,i})$, because $u_{n,i} \in \ri_{L}(D)$. Furthermore, $h(x') =g(x')$ for every $x'\in \ri_{L}(D)$, which implies that $u_{n,i}^\ast\in \sub g_{|_W}(u_{n,i})$.
	
	Moreover, the vectors  $ \alpha_{n,i} u^\ast_{n,i}$ must be bounded (to prove  this fact, one can argue by contradiction following the proof of  \cref{Mordukhovich:Separable}, and then one shows that $ N_{\dom (h+\delta_{\cl D\cap L})_{|_W} }(0)$ contains  a line, which is not possible due to the continuity of $(h+\delta_{\cl D\cap L})_{|_W} $). Hence, we may assume that $ \alpha_{n,i} u^\ast_{n,i}$ converges and  $\alpha_{n,i} \overset{n\to \infty}{\longrightarrow} \alpha_i $. More precisely, on the one hand for each index $i$ such that $\alpha_i=0$, one has that  $\alpha_{n,i} u^\ast_{n,i} \to v_i^\ast$  and $v_i^\ast\in N_{\dom f_{|_W} }(0)$. Indeed, for every $y\in \dom  h_{|_W}$
	\begin{align*}
		\langle  	v_i^\ast , y - 0 \rangle &= \lim 	\langle \alpha_{n,i} u^\ast_{n,i}, y -u_{n,i} \rangle + 	\lim\langle \alpha_{n,i} u^\ast_{n,i}, u_{n,i} -0\rangle\\
		&\leq \lim \alpha_{n,i}(h(y)- h(u_{n,i})  + \lim\langle \alpha_{n,i} u^\ast_{n,i}, u_{n,i} -0\rangle=0.
	\end{align*}
On the other hand, we have that for every index $i$ such that $\alpha_i\neq 0$, $u^\ast_{n,i} \to v_i^*$ and $| \langle u^\ast_{n,i}, u_{n,i}\rangle| \to 0$, then using that $u_{i,n}^\ast \in \sub g_{|_W}(u_{i,n})$ we get  $g(u_{n,i}) \to g(0)$.  	Therefore, 
\begin{align*}
	y^\ast=\sum\limits_{\{ i \mid \alpha_i\neq 0  \}} \alpha_i v_i^\ast + \sum\limits_{ \{  i \mid \alpha_i= 0 \} }  v_i^\ast + \theta^\ast,
	\end{align*}
	 with  $v_i^\ast \in \limsup \sub  f_{|_W}(u_{n,i})$  and  $q^\ast:=\sum\limits_{ 	\{ i \mid \alpha_i= 0 \} }  v_i^\ast + \theta^\ast \in N_{\dom f_{|_W} }(0)$.
	
	Now  define $w_{i}^\ast:=P^\ast(v^\ast_i)$, $\lambda^\ast:=z^\ast + P^\ast(q^\ast)$, $w^\ast:= \sum\limits_{\{ i \mid \alpha_i\neq 0  \}} \alpha_i w_i^\ast$, $w_{n,i}=P^\ast(u^\ast_{n,i})$,  it follows that $w_{n,i}^\ast \overset{ \beta }{\longrightarrow } w_i^\ast$, $| \langle w_{n,i}^\ast, u_{n,i}| \to 0$ and  $w^\ast_{n,i} \in \sub (g + \delta_{W})(u_{n,i})$, $u_{n,i} \in \ri_L(\dom h)$, $g(u_{n,i} )\to g(0)$, $\lambda^\ast \in  N_{\dom h \cap L}(0)$ and $x^\ast = w^\ast + \lambda^\ast$, which concludes the proof of \cref{We}. Then, using \cref{We0,We} we conclude  the first inclusion in  \cref{We2}.
	
	To prove the opposite  inclusion, consider $x^\ast $ in the right-hand side of \cref{We2} and $y\in D$, and consider $L$ as the subspace generated by $y$. Then, there are $\alpha_i \geq 0$ (with $\sum_{i} \alpha_i =1$), $x^\ast_{n,i} \in \sub (g+ \delta_{\aff(D\cap L)}(x_{n,i})$ and  $x_{n,i} \in \ri_{L}(D)$ such that   $x_{n,i} \overset{g}{\to}0$, $x^\ast_{n ,i} \overset{\beta}{\to} y_i ^*$, $|\langle x_{n,i}^*, x_{n,i} \rangle| \to 0$  and $x^\ast = \sum_{i} \alpha_i y_i^\ast + \lambda^\ast$ . Moreover, because $x_n \in \ri_L(D)$ and $h=g$ in $D$, we get  $\sub (g+ \delta_{\aff(D\cap L)})(x_n)= \sub (h+ \delta_{\aff(D\cap L)})(x_n)$. Then, 
	\begin{align*}
		\langle  x^\ast ,y  \rangle &= \langle \sum_{i} \alpha_i y_i^\ast + \lambda^\ast  ,y  \rangle\leq  \sum_{i} \alpha_i   \lim_{n} \langle  x_{n,i}^\ast , y -x_{n,i} \rangle +  \lim_{n} \langle  x_{n,i}^\ast , x_{n,i} \rangle\\ &\leq  \sum_{i} \alpha_i   \lim_{n}  ( h(y)- h(x_{n,i})) = h(y)-h(0).
		\end{align*}
	From the arbitrariness of $y$ we conclude that $x^*\in \sub (h +\delta_D)(0)$, which concludes the proof of \cref{We2}.
\end{proof}

\begin{theorem}\label{teofinal}
	Let $X$ be an lcs and let $\{ f_t :   t\in T \}$ be an arbitrary family of functions and let $D \subset \dom \cco f$ be a convex set such that
	$$\cco (f+\delta_{D})(x) =\sup_{t\in T} \cco f_t(x)  \text{ for all  } x\in D.$$
	   Then for all $\bar{x}\in X$
	\begin{align}\label{FORMULA:CONVEX2}
		\sub (f+\delta_D)(\bar{x})=\bigcap\limits_{\substack{\epsilon>0\\ L\in \mathcal{F}_{\bar{x}} }}  \cl^{w^*}\bigg(  \co\big(\bigcup\limits_{ t  \in  T_\epsilon(\bar{x}) }  \sub_{\epsilon}f_t(\bar{x}) \big)+ N_{D \cap L}(\bar{x})\bigg).
	\end{align}
\end{theorem}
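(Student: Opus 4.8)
The plan is to transfer the problem to the convex setting and then apply \cref{lemma:Rock}. Put $h:=\cco(f+\delta_D)$ and $g:=\sup_{t\in T}\cco f_t$; both are convex lsc, and the standing hypothesis reads exactly $h=g$ on $D$, so that $h+\delta_D=g+\delta_D$. Since any continuous affine minorant of $f+\delta_D$ is automatically a minorant of its closed convex hull $h$, one gets the easy inclusion $\sub(f+\delta_D)(\bar x)\subseteq\sub(h+\delta_D)(\bar x)$ (and when $\bar x\notin D$ both sides of \cref{FORMULA:CONVEX2} are empty, because $N_{D\cap L}(\bar x)=\emptyset$). The strategy is therefore to bound $\sub(h+\delta_D)(\bar x)$ from above by the right-hand side of \cref{FORMULA:CONVEX2}, and to close the chain of inclusions by a direct verification that every element of that right-hand side already lies in $\sub(f+\delta_D)(\bar x)$.

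Next I would invoke \cref{lemma:Rock} with the pair $(h,g)$ and the convex set $D$, which gives $\sub(h+\delta_D)(\bar x)=\bigcap_{L\in\mathcal{F}_{\bar x}}\{\co(S_L(\bar x))+N_{D\cap L}(\bar x)\}$, where $S_L(\bar x)$ collects the $\beta$-limits of subgradients $x_n^\ast\in\sub(g+\delta_{\aff(D\cap L)})(x_n)$ with $x_n\in\ri_L(D)$, $x_n\overset{g}{\to}\bar x$ and $\langle x_n^\ast,x_n-\bar x\rangle\to 0$. The term $N_{D\cap L}(\bar x)$ is already precisely the normal-cone term appearing in \cref{FORMULA:CONVEX2}, so (using that $N_{D\cap L}(\bar x)$ is a convex cone) the whole task reduces to rewriting $\co(S_L(\bar x))$ in terms of the $\epsilon$-subdifferentials $\sub_\epsilon f_t(\bar x)$ and the $\epsilon$-active set $T_\epsilon(\bar x)$.

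Because $\aff(D\cap L)$ is finite-dimensional, $g+\delta_{\aff(D\cap L)}=\sup_{t\in T}(\cco f_t+\delta_{\aff(D\cap L)})$ is a supremum of convex functions over a finite-dimensional affine set, so the finite-dimensional (exact) instance of the convex supremum rule---\cref{TEO:CONVEX:1}, where in finite dimensions equality holds and the $\beta$-$\mathrm{seq}$-closure becomes an ordinary closure---expresses $\sub(g+\delta_{\aff(D\cap L)})(x_n)$ as $\co\big(\bigcup\sub(\cco f_t)(x_n)\big)$ plus normal-cone contributions of the sets $\dom f_t\cap\aff(D\cap L)$, the union running over the indices active at $x_n$. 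The decisive step is then the localization: using $x_n\to\bar x$, $g(x_n)\to g(\bar x)=f(\bar x)$ and the defining condition $\langle x_n^\ast,x_n-\bar x\rangle\to 0$, the elementary splitting $\langle x_n^\ast,y-\bar x\rangle=\langle x_n^\ast,y-x_n\rangle+\langle x_n^\ast,x_n-\bar x\rangle\le f_t(y)-f_t(\bar x)+\epsilon$ (valid for $n$ large, since $\cco f_t\le f_t$) shows $x_n^\ast\in\sub_\epsilon f_t(\bar x)$ and that the indices active at $x_n$ lie in $T_\epsilon(\bar x)$, while the domain-normal-cone contributions are absorbed into $\sub_\epsilon f_t(\bar x)$ (which already carries the recession directions) and into $N_{D\cap L}(\bar x)$. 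This yields $\co(S_L(\bar x))+N_{D\cap L}(\bar x)\subseteq\bigcap_{\epsilon>0}\cl^{w^\ast}\big(\co(\bigcup_{t\in T_\epsilon(\bar x)}\sub_\epsilon f_t(\bar x))+N_{D\cap L}(\bar x)\big)$, and intersecting over $L$ gives the inclusion ``$\subseteq$'' in \cref{FORMULA:CONVEX2}. I expect this localization---keeping simultaneous control of the active indices and of the value gap $f_t(\bar x)-\cco f_t(\bar x)$ as the perturbation shrinks---to be the main obstacle.

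For the reverse inclusion I would argue directly, as in the closing parts of \cref{corollaryconvex1} and \cref{lemma:Rock}. Fix $x^\ast$ in the right-hand side of \cref{FORMULA:CONVEX2} and $y\in D$, pick $L\in\mathcal{F}_{\bar x}$ containing $y$ together with a sequence $\epsilon_n\downarrow 0$, and write $x^\ast$ as a $w^\ast$-limit of finite convex combinations $\sum_t\lambda_t x_t^\ast+w_n^\ast$ with $x_t^\ast\in\sub_{\epsilon_n}f_t(\bar x)$, $t\in T_{\epsilon_n}(\bar x)$ and $w_n^\ast\in N_{D\cap L}(\bar x)$. Combining the $\epsilon_n$-subgradient inequalities $\langle x_t^\ast,y-\bar x\rangle\le f_t(y)-f_t(\bar x)+\epsilon_n\le f(y)-f(\bar x)+2\epsilon_n$ (using $f_t(y)\le f(y)$ and $t\in T_{\epsilon_n}(\bar x)$) against the weights $\lambda_t$, adding the normal-cone inequality $\langle w_n^\ast,y-\bar x\rangle\le 0$, and letting $n\to\infty$ yields $\langle x^\ast,y-\bar x\rangle\le (f+\delta_D)(y)-(f+\delta_D)(\bar x)$ for every $y\in D$; since this is trivial for $y\notin D$, the arbitrariness of $y$ gives $x^\ast\in\sub(f+\delta_D)(\bar x)$ and closes the chain of inclusions, completing the proof.
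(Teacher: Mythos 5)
Your overall architecture coincides with the paper's: pass to $h=\cco(f+\delta_D)$ and $g=\sup_{t}\cco f_t$, invoke \cref{lemma:Rock}, reduce the problem to rewriting $S_L(\bar x)$ on the finite-dimensional subspace $W=\aff(D\cap L)$ by applying a supremum rule to the restrictions $(g_t)_{|_W}$, and finish the converse inclusion by the direct $\epsilon$-subgradient computation. The only structural deviation is the intermediate supremum rule: the paper applies \cref{TEO:MORD:NGH} to the restrictions (which are locally Lipschitz near points of $\ri_L(D)$ because $g_{|_W}$ is locally bounded there) and then the Hiriart-Urruty--Phelps formula to trade $\sub(g_t+\delta_W)$ for $\sub_\epsilon g_t+W^\perp$, whereas you propose the finite-dimensional case of \cref{TEO:CONVEX:1}; both routes are viable for the same reason.

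The genuine gap is in the localization step, which you rightly flag as the main obstacle but whose resolution you misstate. The splitting $\langle x_n^\ast,y-\bar x\rangle=\langle x_n^\ast,y-x_n\rangle+\langle x_n^\ast,x_n-\bar x\rangle$ uses the control $\langle x_n^\ast,x_n-\bar x\rangle\to 0$, which the definition of $S_L$ supplies only for the \emph{aggregate} subgradient $x_n^\ast\in\sub(g+\delta_{\aff(D\cap L)})(x_n)$; but the conclusion you need concerns the \emph{individual} pieces $x^\ast_{t}(n)\in\sub(\cco f_t)_{|_W}(z_n)$ produced by the supremum rule, and for those the analogous control $\langle x^\ast_t(n),z_n-\bar x\rangle\to 0$ is not free. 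It must be derived from uniform boundedness of the $x^\ast_t(n)$ together with $z_n\to\bar x$, and that boundedness is itself a nontrivial claim: the paper obtains it by showing that otherwise the normal cone to $\dom g_{|_W}$ would contain a line, contradicting $\ri_L(D)\neq\emptyset$. Without it you can neither place $x^\ast_t(n)$ into $\sub_\epsilon f_t(\bar x)$ nor certify $t\in T_\epsilon(\bar x)$ (the latter uses $f_t(\bar x)\geq g_t(\bar x)\geq g_t(z_n)-\langle x^\ast_t(n),z_n-\bar x\rangle\geq f(\bar x)-\epsilon$). Likewise, converting $\sub(\cco f_t)(z_n)$ into $\sub_\epsilon f_t(\bar x)$ requires the two-sided value estimate bounding $f_t(\bar x)-\cco f_t(z_n)$, which comes out of the same boundedness plus $g_{F_n}(z_n)\to g(\bar x)=f(\bar x)\geq f_t(\bar x)$; this is the inequality chain the paper records as \cref{Weee34}. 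These points are fixable, and the paper fixes them, but they constitute the real content of the proof rather than routine bookkeeping.
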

\begin{proof}
W.l.o.g we can assume that $\bar{x}=0$. Because the inclusion $\supseteq$ is direct, we focus on the opposite one. To prove  this inclusion, we can assume that $\sub (f+\delta_D)(0) \neq \emptyset$, in particular $(f+\delta_D)(x)=\cco(f+\delta_D)(x)$.   First, we denote by $h=\cco (f+\delta_{D})$,  $g_t:=\cco f_t$ and $g=\sup_{t \in T} g_t$, then we apply  \cref{lemma:Rock} and we get
	\begin{align}\label{zero}
	\sub (f+\delta_{D}) (0)\subseteq 	\sub h (0) =  \bigcap\limits_{ \substack{ L\in \mathcal{F}_{0}}} \bigg\{\co\left\{ S_L(0) \right\}+ N_{ D \cap  L}(0)\bigg\}.
	\end{align}
	
	 We claim that for every  $L \in \mathcal{F}_0$, $\epsilon>0$ and $U\in\mathcal{N}_0(w^*)$
	\begin{align}\label{primero}
			S_L(0) \subseteq     \co \big(  \bigcup\limits_{ t  \in  T_\epsilon(0) }  \sub_{\epsilon}f_t(0) \big)+ N_{D \cap L}(0) + U+U,
		\end{align}
	where $S_L(0)$ was defined in   \cref{lemma:Rock}. Indeed, consider $x^\ast \in S_L(0)$, then by definition there exist sequences $y_n\in \ri_{\aff(D \cap L)} (D)$ and  $y_n^\ast \in \sub (g+ \delta_{\aff(D\cap L)})(y_n)$ such that $y_n^\ast  \to x^\ast$, $|\langle y^\ast_n, y_n\rangle | \to  0$ and $| g(y_n) -g(0) | \to 0$.

	 Now, the restriction of each $y_n^\ast$ to $W:=\aff(D\cap L)$ belongs to $\sub g_{|_W}(y_n)$ and $y_n \in \ri_W(\dom g_{|_W})$.  Since the function $g_{|_W}$ is locally bounded at $y_n$ we can find a constant $M_n$  and a closed convex neighborhood $V_n$ of zero (relative to $W$) such that
	 \begin{align*}
	 	g_t(x) \leq g_t(y_n) + M_n-g_t(y_n),\; \forall x \in y_n+ V_n.
	 	\end{align*}
 	Consequently,  by   \cite[Corollary 2.2.12]{MR1921556}
 	\begin{align*}
 		|g_t(x)-g(x')| \leq 3M_{t,n} \rho_{V_n}(x-y),\forall x,x' \in y_n + \frac{1}{2}V_n,
 		\end{align*}
 	where $M_{t,n}:=M_n-g_t(y)$ and $\rho_{V_n}$ is the \emph{Minkowski's functional} associated to $V_n$, that is, $\rho_{V_n}(u):=\inf\{ s >0 : u \in sV_n	\}$. In particular,  each function $(g_t)_{|_W}$  is   Lipschitz continuous on   $\frac{1}{2}V_n$, it allows us to apply \cref{TEO:MORD:NGH} and by a diagonal argument   we yield  that there exists a sequence  of sets 
 	  $F_n \in \Pf(T)$, and  there are  sequences of vectors $x_n \in W $ , $x^\ast_t(n) \in \sub (g_t)_{|_W}(x_n)$ together with scalars $(\lambda_t(n)) \in \Delta(F_n)$ such that $x_n \to 0$, $|g_{F_n}(x_n)-g(0)| \to 0$
 	  and $g_t(x_n)=g_{F_n}(x_n)$ for all $t\in F_n$ and $x_n^\ast= \sum_{t\in F_n} \lambda_t(n) x^\ast_t(n) \to x_{|_W}^\ast$.  From the fact that the dimension of $W$ is finite, we can assume that $\#F_n \leq \dim(W)+1$. Hence, necessarily the points $x^\ast_t(n)$ are uniformly bounded in $W$, otherwise $N_{\dom f_{|_W}}(0)$ contains a line, which is not possible due to $ \ri_{\aff(L\cap\dom g)}(\dom g_{|_W}) \neq \emptyset$ (it can be seen using similar arguments as those given in the proof of \cref{corollaryconvex1}). Then, we can assume that there exists 
 	   $F \in \Pf(T)$, $x \in  W $, $x^\ast_t \in \sub (g_t)_{|_W}(x)$ and  $(\lambda_t) \in \Delta(F)$ such that $\max_{t\in F}|\langle x^\ast_t, x \rangle| \leq  \epsilon/5 $, $|g_t(x) - g(0)| \leq  \epsilon/5 $, $g_t(x)=f_{F}(x)$ for all $t\in F$ and 
 	    $$x^*_{|_W} \in \sum_{t\in F} \lambda_t x_t^\ast + (P^*)^{-1}(U),$$
 	     where $P$ is a continuous projection from $X$ to $W$. Then,
 	     	\begin{align}
 	     	x^\ast\in  \sum_{t\in F} \lambda_t w_t^* + x^\ast - P^*(y^*_{|_W} )  +U,\label{Weee}
 	     \end{align}
      here $w_t^\ast :=P^{*}(x_t^\ast)$ and $w^\ast_t \in \sub ( g_t + \delta_{W} )(x)$. Furthermore, for all $t\in F$ 
 	     \begin{align}\label{Weee34}
 	     	\begin{array}{rl}
 	     		 f_t(0)+  2\epsilon/5  &\geq  g_t(0)+  2\epsilon/5 \geq  g_t(0) + |\langle x_t ,x\rangle|+  \epsilon/5 \\
 	     		&	\geq g_t(x) +  \epsilon/5  \geq g(0)=f(0),
 	     	\end{array}
 	     	\end{align}

 	  Now by Hirriat-Hurruty-Phelps' formula \cite[Theorem 2.1]{MR1245600}
 	 	$$\sub ( g_t + \delta_{W} )(x) \subseteq  \sub_{ \epsilon/5 } g_t(x) +W^{\perp} + U,$$
 	  which implies the existence of some point $\tilde{w}^\ast_t \in    \sub_{ \epsilon/5 } g_t(x) $ such that 
 	  \begin{align}\label{Weeee}
 	w^\ast_t  \in \tilde{w}^\ast_t  +  W^{\perp} + U.
 	  \end{align}
 	   Now, let us show that $ \tilde{w}^\ast_t  \in  \sub_{\epsilon} f_t(0)$. Indeed, consider $z\in X$, then
 	   	  \begin{align*}
 	  	\langle \tilde{w}^\ast_{t}, z  \rangle&= \langle \tilde{w}^\ast_{i,t}, z - x \rangle  + | \langle w^\ast_{t}, x \rangle|\leq g_t(z)- g_t(x) +  \epsilon/5 +  \epsilon/5  \\
 	  	&\leq g_t(z)-g_t(0) + g_t(0) -g_t(x) + 2 \epsilon/5  \leq g_t(z)-g_t(0) + g(0) -g_t(x)  + 2 \epsilon/5 \\
 	  	& \leq  g_t(z)-g_t(0) + 3\epsilon/5\leq f_t(x) - f_t(0) + f_t(0)- g_t(0) + 3\epsilon/5\\
 	  	&\leq f_t(z) - f_t(0) + \epsilon \text{ (by \cref{Weee34})}.
 	  \end{align*}
  Now, according to \cref{Weee,Weee34,Weeee}   we get \cref{primero} and from the arbitrariness of $\epsilon>0$ and $U$ we conclude that
   \begin{align}\label{segundo}
   	S_L(0) + N_{D \cap L}(0)  \subset	\bigcap\limits_{\substack{\epsilon>0}}  \cl^{w^*}\bigg(  \co \big(  \bigcup\limits_{ t  \in  T_\epsilon(x) }  \sub_{\epsilon}f_t(x) \big) + N_{D \cap L}(x)\bigg).
   	\end{align}
   	
   	Finally, using     \cref{zero} and \cref{segundo}  we conclude the desired inclusion in \cref{FORMULA:CONVEX2}.
\end{proof}

\begin{remark}
	It is worth mentioning that \cref{teofinal} represents a slight extension of \cite[Corollary 6]{MR3561780}, because in this result the authors have assumed that the data functions $f_t$'s are convex and proper.  
\end{remark}

\section{Conclusions}

In this paper, we have provided general formulae for the supremum function of an arbitrary family of lsc functions.

In \Cref{sectionfuzzy}, we  provided general fuzzy calculus rules in terms of the Fr\'echet subdifferential. Our approach follows from establishing these fuzzy calculus rules for an increasing family of functions (see \cref{teo:sup:1,}), where the key tool is the introduction of the notation of \emph{robust infimum}.  Later, in \cref{THEOREM:FORMULA:SUPREMUM}, we used the power set ordered by inclusion to get general fuzzy calculus rules of an arbitrary family of functions, without any qualification condition, as far as we know this approach is novel. 

In \Cref{limitingSub} we  established the main results of the paper, where we  replaced the  Lipschitz continuous assumption of the data by some limiting condition in  terms of the singular subdifferentials  (see \cref{Teo2c,LIMITING:CONDITION}). It has not escape our notice that these kind of conditions are becoming more popular in providing subdifferential calculus rules (see, e.g., \cite{MR2191744,MR2191745,MR1491362,MR2986672,MR2144010,MR1014198,MR859504,MR719677}). This section was divided into \Cref{limitingSubfinite} and \Cref{limitingSubinfinite}, which focused  attention on finite-dimensional and  infinite-dimensional settings respectively.  In both subsections we gave formulae for the subdifferential of the supremum function under different conditions. Here, It is worth comparing \cref{teoremcompactindex} and  \cref{Mordukhovich:Separable}. The main difference between these two results is that the first one is a convex upper-estimate, and  the second one corresponds to a non-convex upper-estimate  (as we showed in \cref{Example:Nonconvex}). This difference can be explained, because \cref{teoremcompactindex}  uses  a limiting condition only at the point of interest (see, \cref{Teo2c}), but \cref{Mordukhovich:Separable} uses the information of the subdifferential at a  neighborhood of the point of interest (see \cref{LIMITING:CONDITION}).

Finally, in \Cref{SECTION:CONVEXSUB} we  shown that our approach can be used to get new  formulae for the convex subdifferential, with and without qualification conditions, of the supremum function (see \cref{TEO:CONVEX:1} and  \cref{corollaryconvex1}), and also, it allows us to recover  \cite[Corollary 6]{MR3561780} using \cref{Mordukhovich:Separable} (see \cref{teofinal}), which in particular shows a unifying approach to the study of the subdifferential of the supremum function.

\bibliographystyle{plain}
\bibliography{references}

\begin{thebibliography}{10}

\bibitem{MR1992991}
J.~M. Borwein and R.~Goebel.
\newblock Notions of relative interior in {B}anach spaces.
\newblock {\em J. Math. Sci. (N. Y.)}, 115(4):2542--2553, 2003.
\newblock Optimization and related topics, 1.

\bibitem{MR1491362}
J.~M. Borwein, B.~S. Mordukhovich, and Y.~Shao.
\newblock On the equivalence of some basic principles in variational analysis.
\newblock {\em J. Math. Anal. Appl.}, 229(1):228--257, 1999.

\bibitem{MR2144010}
J.~M. Borwein and Q.~J. Zhu.
\newblock {\em Techniques of variational analysis}.
\newblock CMS Books in Mathematics/Ouvrages de Math\'ematiques de la SMC, 20.
  Springer-Verlag, New York, 2005.

\bibitem{MR838482}
J.~M. Borwein and D.~Zhuang.
\newblock On {F}an's minimax theorem.
\newblock {\em Math. Programming}, 34(2):232--234, 1986.

\bibitem{MR910295}
N.~Bourbaki.
\newblock {\em Topological vector spaces. {C}hapters 1--5}.
\newblock Elements of Mathematics (Berlin). Springer-Verlag, Berlin, 1987.
\newblock Translated from the French by H. G. Eggleston and S. Madan.

\bibitem{MR1058436}
F.~H. Clarke.
\newblock {\em Optimization and nonsmooth analysis}, volume~5 of {\em Classics
  in Applied Mathematics}.
\newblock Society for Industrial and Applied Mathematics (SIAM), Philadelphia,
  PA, second edition, 1990.

\bibitem{Clarke:1998:NAC:274798}
F.~H. Clarke, Yu.~S. Ledyaev, R.~J. Stern, and P.~R. Wolenski.
\newblock {\em Nonsmooth Analysis and Control Theory}.
\newblock Springer-Verlag New York, Inc., Secaucus, NJ, USA, 1998.

\bibitem{MR3561780}
R.~Correa, A.~Hantoute, and M.~A. L\'opez.
\newblock Towards supremum-sum subdifferential calculus free of qualification
  conditions.
\newblock {\em SIAM J. Optim.}, 26(4):2219--2234, 2016.

\bibitem{MR3582299}
M.~C\'uth and M.~Fabian.
\newblock Rich families and projectional skeletons in {A}splund {WCG} spaces.
\newblock {\em J. Math. Anal. Appl.}, 448(2):1618--1632, 2017.

\bibitem{MR3533170}
M.~Fabian and A.~D. Ioffe.
\newblock Separable reductions and rich families in the theory of {F}r\'echet
  subdifferentials.
\newblock {\em J. Convex Anal.}, 23(3):631--648, 2016.

\bibitem{MR0055678}
K.~Fan.
\newblock Minimax theorems.
\newblock {\em Proc. Nat. Acad. Sci. U. S. A.}, 39:42--47, 1953.

\bibitem{MR2286967}
A.~Hantoute.
\newblock Subdifferential set of the supremum of lower semi-continuous convex
  functions and the conical hull intersection property.
\newblock {\em Top}, 14(2):355--374, 2006.

\bibitem{MR2489616}
A.~Hantoute and M.~A. L\'opez.
\newblock A complete characterization of the subdifferential set of the
  supremum of an arbitrary family of convex functions.
\newblock {\em J. Convex Anal.}, 15(4):831--858, 2008.

\bibitem{MR2448918}
A.~Hantoute, M.~A. L{\'o}pez, and C.~Z{\u{a}}linescu.
\newblock Subdifferential calculus rules in convex analysis: a unifying
  approach via pointwise supremum functions.
\newblock {\em SIAM J. Optim.}, 19(2):863--882, 2008.

\bibitem{MR1245600}
J.-B. Hiriart-Urruty and R.~R. Phelps.
\newblock Subdifferential calculus using {$\epsilon$}-subdifferentials.
\newblock {\em J. Funct. Anal.}, 118(1):154--166, 1993.

\bibitem{MR719677}
A.~D. Ioffe.
\newblock Approximate subdifferentials and applications. {I}. {T}he
  finite-dimensional theory.
\newblock {\em Trans. Amer. Math. Soc.}, 281(1):389--416, 1984.

\bibitem{MR859504}
A.~D. Ioffe.
\newblock Approximate subdifferentials and applications. {II}.
\newblock {\em Mathematika}, 33(1):111--128, 1986.

\bibitem{MR1014198}
A.~D. Ioffe.
\newblock Approximate subdifferentials and applications. {III}. {T}he metric
  theory.
\newblock {\em Mathematika}, 36(1):1--38, 1989.

\bibitem{MR3033176}
A.~D. Ioffe.
\newblock On the theory of subdifferentials.
\newblock {\em Adv. Nonlinear Anal.}, 1(1):47--120, 2012.

\bibitem{MR2159471}
M.~Ivanov.
\newblock Sequential representation formulae for {$G$}-subdifferential and
  {C}larke subdifferential in smooth {B}anach spaces.
\newblock {\em J. Convex Anal.}, 11(1):179--196, 2004.

\bibitem{MR2840670}
F.~Jules and M.~Lassonde.
\newblock Dense subdifferentiability and trustworthiness for arbitrary
  subdifferentials.
\newblock {\em Serdica Math. J.}, 36(4):387--402, 2010.

\bibitem{MR0467080}
P.-J. Laurent.
\newblock {\em Approximation et optimisation}.
\newblock Hermann, Paris, 1972.
\newblock Collection Enseignement des Sciences, No. 13.

\bibitem{MR2837551}
C.~Li and K.~F. Ng.
\newblock Subdifferential calculus rules for supremum functions in convex
  analysis.
\newblock {\em SIAM J. Optim.}, 21(3):782--797, 2011.

\bibitem{MR2295358}
M.~A. L\'opez and G.~Still.
\newblock Semi-infinite programming.
\newblock {\em European J. Oper. Res.}, 180(2):491--518, 2007.

\bibitem{MR2191744}
B.~S. Mordukhovich.
\newblock {\em Variational analysis and generalized differentiation. {I}},
  volume 330 of {\em Grundlehren der Mathematischen Wissenschaften [Fundamental
  Principles of Mathematical Sciences]}.
\newblock Springer-Verlag, Berlin, 2006.
\newblock Basic theory.

\bibitem{MR2191745}
B.~S. Mordukhovich.
\newblock {\em Variational analysis and generalized differentiation. {II}},
  volume 331 of {\em Grundlehren der Mathematischen Wissenschaften [Fundamental
  Principles of Mathematical Sciences]}.
\newblock Springer-Verlag, Berlin, 2006.
\newblock Applications.

\bibitem{mordukhovich2018variational}
B.~S. Mordukhovich.
\newblock {\em Variational Analysis and Applications}, volume~8.
\newblock Springer, {C}ham, 2018.

\bibitem{MR3033113}
B.~S. Mordukhovich and T.~T.~A. Nghia.
\newblock Subdifferentials of nonconvex supremum functions and their
  applications to semi-infinite and infinite programs with {L}ipschitzian data.
\newblock {\em SIAM J. Optim.}, 23(1):406--431, 2013.

\bibitem{MR3205549}
B.~S. Mordukhovich and T.~T.~A. Nghia.
\newblock Nonsmooth cone-constrained optimization with applications to
  semi-infinite programming.
\newblock {\em Math. Oper. Res.}, 39(2):301--324, 2014.

\bibitem{MR3000580}
B.~S. Mordukhovich and H.~M. Phan.
\newblock Tangential extremal principles for finite and infinite systems of
  sets, {I}: basic theory.
\newblock {\em Math. Program.}, 136(1, Ser. B):3--30, 2012.

\bibitem{MR1333396}
B.~S. Mordukhovich and Y.~H. Shao.
\newblock Nonsmooth sequential analysis in {A}splund spaces.
\newblock {\em Trans. Amer. Math. Soc.}, 348(4):1235--1280, 1996.

\bibitem{MR2384963}
B.~S. Mordukhovich and B.~Wang.
\newblock Generalized differentiation of parameter-dependent sets and mappings.
\newblock {\em Optimization}, 57(1):17--40, 2008.

\bibitem{moreau1967fonctionnelles}
J.~J. Moreau.
\newblock {\em Fonctionnelles convexes}.
\newblock Number~2. Lecture notes S{\'e}minaire "Equations aux d{\'e}riv{\'e}e
  partialles", 1966.

\bibitem{MR1884910}
H.~V. Ngai and M.~Th\'era.
\newblock A fuzzy necessary optimality condition for non-{L}ipschitz
  optimization in {A}splund spaces.
\newblock {\em SIAM J. Optim.}, 12(3):656--668, 2002.

\bibitem{MR2965224}
T.~T.~A. Nghia.
\newblock A nondegenerate fuzzy optimality condition for constrained
  optimization problems without qualification conditions.
\newblock {\em Nonlinear Anal.}, 75(18):6379--6390, 2012.

\bibitem{MR2986672}
J.-P. Penot.
\newblock {\em Calculus without derivatives}, volume 266 of {\em Graduate Texts
  in Mathematics}.
\newblock Springer, New York, 2013.

\bibitem{Perez-Aros2018}
P.~P{\'e}rez-Aros.
\newblock Formulae for the conjugate and the subdifferential of the supremum
  function.
\newblock {\em Journal of Optimization Theory and Applications}, Jul 2018.

\bibitem{MR1451876}
R.~T. Rockafellar.
\newblock {\em Convex analysis}.
\newblock Princeton Landmarks in Mathematics. Princeton University Press,
  Princeton, NJ, 1997.
\newblock Reprint of the 1970 original, Princeton Paperbacks.

\bibitem{MR1741419}
H.~H. Schaefer and M.~P. Wolff.
\newblock {\em Topological vector spaces}, volume~3 of {\em Graduate Texts in
  Mathematics}.
\newblock Springer-Verlag, New York, second edition, 1999.

\bibitem{MR0312194}
S.~Simons.
\newblock Maximinimax, minimax, and antiminimax theorems and a result of {R}.
  {C}. {J}ames.
\newblock {\em Pacific J. Math.}, 40:709--718, 1972.

\bibitem{MR791361}
A.~Stef\u{a}nescu.
\newblock A general min-max theorem.
\newblock {\em Optimization}, 16(4):497--504, 1985.

\bibitem{MR1453305}
L.~Thibault.
\newblock Sequential convex subdifferential calculus and sequential {L}agrange
  multipliers.
\newblock {\em SIAM J. Control Optim.}, 35(4):1434--1444, 1997.

\bibitem{MR1777630}
L.~Thibault.
\newblock Limiting convex subdifferential calculus with applications to
  integration and maximal monotonicity of subdifferential.
\newblock In {\em Constructive, experimental, and nonlinear analysis
  ({L}imoges, 1999)}, volume~27 of {\em CMS Conf. Proc.}, pages 279--289. Amer.
  Math. Soc., Providence, RI, 2000.

\bibitem{MR1921556}
C.~Z{\u{a}}linescu.
\newblock {\em Convex analysis in general vector spaces}.
\newblock World Scientific Publishing Co., Inc., River Edge, NJ, 2002.

\end{thebibliography}
\end{document}